\documentclass[11pt]{amsart}

\usepackage{amssymb,amsfonts,amsmath}
\usepackage{scalerel}
\usepackage{makecell}
\usepackage{amsthm, stmaryrd}
\usepackage{graphicx}
\usepackage[all,arc]{xy}
\usepackage{hyperref}
\hypersetup{colorlinks,allcolors=black}
\usepackage{enumerate}
\usepackage{bbm}
\usepackage{dsfont}
\usepackage{mathrsfs}
\usepackage{tikz-cd}
\usetikzlibrary{shapes}
\usepackage{import}
\usepackage{float}
\restylefloat{table}
\usepackage{multirow}
\usepackage{pdflscape}
\usepackage{listofitems}
\usepackage{standalone}
\usepackage[T1]{fontenc}
\usepackage[toc,page]{appendix}
\usepackage{diagbox}
\usepackage[margin=1in]{geometry}
\usepackage{mathtools}
\usetikzlibrary{decorations.pathmorphing}
\usepackage{mathrsfs}
\usepackage{cancel}
\usepackage{relsize}
\newtheorem{thm}{Theorem}[section]
\newtheorem*{thm*}{Theorem}
\newtheorem*{metathm*}{Meta Theorem}

\newtheorem*{setup*}{Setup}

\newtheorem{innercustomthm}{Theorem}
\newenvironment{customthm}[1]
  {\renewcommand\theinnercustomthm{#1}\innercustomthm}
  {\endinnercustomthm}

\newtheorem{cor}[thm]{Corollary}
\newtheorem{prop}[thm]{Proposition}
\newtheorem{lem}[thm]{Lemma}

\theoremstyle{definition}
\newtheorem{defn}[thm]{Definition}

\newtheorem{notn}[thm]{Notation}

\newtheorem{rem}[thm]{Remark}

\newtheorem*{thm1.2}{\textrm{Theorem 1.2}}

\theoremstyle{remark}
\newcommand{\Mbar}{\overline{\mathcal{M}}}
\newcommand{\Mps}{\mathfrak{M}}
\newcommand{\Cps}{\mathfrak{C}}

\newcommand{\M}{\mathcal{M}}

\newtheorem{sch}[thm]{Scholium}

\newcommand{\Z}{\mathbb{Z}}
\newcommand{\Q}{\mathcal{Q}}
\newcommand{\QQ}{\mathbb{Q}}
\newcommand{\Qbar}{\overline{\mathcal{Q}}}
\renewcommand{\P}{\mathbb{P}}
\newcommand{\bbS}{\mathbb{S}}
\newcommand{\bbL}{\mathbb{L}}

\newcommand{\ch}{\operatorname{ch}}

\newcommand{\qbinomopt}[3][q]{\genfrac{[}{]}{0pt}{}{#2}{#3}_{#1}}

\newcommand{\cat}[1]{\mathsf{#1}}

\newcommand{\nocontentsline}[3]{}
\let\origcontentsline\addcontentsline
\newcommand\stoptoc{\let\addcontentsline\nocontentsline}
\newcommand\resumetoc{\let\addcontentsline\origcontentsline}

\def\A{\mathbb{A}}

\def\C{\mathbb{C}}

\def\P{\mathbb{P}}

\def\Z{\mathbb{Z}}

\def\calA{\mathcal{A}}
\def\calB{\mathcal{B}}
\def\calC{\mathcal{C}}

\def\calE{\mathcal{E}}

\def\calO{\mathcal{O}}

\def\calQ{\mathcal{Q}}

\def\calS{\mathcal{S}}
\def\calT{\mathcal{T}}

\def\calV{\mathcal{V}}
\def\calW{\mathcal{W}}
\def\calX{\mathcal{X}}
\def\calY{\mathcal{Y}}

\def\M{\mathcal{M}}

\newcommand{\Sym}{\operatorname{Sym}}
\newcommand{\Ind}{\operatorname{Ind}}
\newcommand{\Res}{\operatorname{Res}}
\newcommand{\Spec}{\operatorname{Spec}}
\newcommand{\Exp}{\operatorname{Exp}}

\newcommand{\ve}{\varepsilon}

\newcommand{\Quot}{\mathrm{Quot}}

\newcommand\cycle[2][\,]{%
  \readlist\thecycle{#2}%
  (\foreachitem\i\in\thecycle{\ifnum\icnt=1\else#1\fi\i})%
}

\newcommand{\Gr}{\mathbb{G}}
\newcommand{\GL}{\mathrm{GL}}
\renewcommand{\-}{\text{-}}

\newcommand{\bbA}{\mathbb{A}}

\makeatletter
\let\c@equation\c@thm
\makeatother
\numberwithin{equation}{section}

\graphicspath{ {images/} }

\title{Motivic quasimap wall-crossing for Grassmannians}

\author[S. Kannan]{Siddarth Kannan}\address{Department of Mathematics, Massachusetts Institute of Technology}
\email{\url{spkannan@mit.edu}}
\author[T. Song]{Terry Dekun Song}\address{Department of Pure Mathematics and Mathematical Statistics, University of Cambridge, Cambridge, CB3 0WA}\email{\url{ds2016@cantab.ac.uk}}

\begin{document}
\maketitle

\begin{abstract}
We prove a wall-crossing formula for the Euler characteristics, considered as virtual mixed Hodge structures, of moduli spaces of $\ve$-stable quasimaps to the Grassmannian $\Gr(r, N)$. For each $\ve > 0$, we define a $\QQ$-algebra automorphism of the ring of symmetric functions which takes the generating function for the $\bbS_n$-equivariant Euler characteristics of the moduli spaces of stable maps $\Mbar_{g, n}(\Gr(r, N), d)$ to the corresponding generating function for Toda's moduli spaces of $\ve$-stable quasimaps $\Qbar_{g, n}^{\ve}(\Gr(r, N), d)$. The automorphism is given by explicit $q$-deformations of the power sum symmetric functions. The $\ve \to 0$ limit of our formula exchanges the spaces of stable maps and the Marian--Oprea--Pandharipande moduli spaces of stable quotients. Our proof uses the geometry of relative Quot schemes to relate the quasimap spaces to moduli spaces of \textit{weighted} stable maps, for which we obtain wall-crossing formulas via symmetric function theory. 
\end{abstract}

\section{Introduction}
For a GIT quotient $V\sslash G\subset [V/G],$ both the moduli space of stable maps $\Mbar_{g, n}(V\sslash G;\beta)$ and the moduli space of $\ve$-stable quasimaps $\Qbar_{g, n}^{\ve}(V\sslash G;\beta)$ constructed in \cite{quasimaps_gitquotients} determine enumerative theories of curves in $V\sslash G$, leading to Gromov--Witten invariants and quasimap invariants, respectively. For a large class of targets, the corresponding generating functions for these invariants are known to be equivalent via explicit wall-crossing formulas \cite{CFK14, CFK17, CFK20, Zhou2021, CJR}.

In this article we establish topological and motivic incarnations of the enumerative wall-crossing formulas, with
\[V\sslash G =  \Gr(r, N)\subset [\mathrm{Hom}(\C^{r}, \C^N)/\GL_{r}(\C)], \]
where $\Gr(r, N)$ denotes the Grassmannian of $r$-planes in $\C^N$. In this case, the moduli stacks of $\ve$-stable quasimaps were first constructed by Toda \cite{Toda}, preceding the construction for general GIT quotients in \cite{quasimaps_gitquotients}. The $\ve \to 0^+$ limit of the space of $\ve$-stable quasimaps recovers the moduli space $\Qbar_{g, n}(\Gr(r, N), d)$ of stable quotients constructed by Marian--Oprea--Pandharipande \cite{MOP}, while taking $\ve > 1$ recovers the moduli space of stable maps.

Our main theorem concerns the Euler characteristic of $\Qbar_{g, n}^{\ve}(\Gr(r, N), d)$ as a virtual mixed Hodge structure. As $\ve$ varies, we find that the generating functions for the Euler characteristics are equivalent under explicit invertible transformations. To state our formula precisely, let $\bbS_n$ denote the symmetric group on $n$ letters, and let $\cat{MHS}_{\bbS_n}$ denote the category of $\bbS_n$-representations in the category $\cat{MHS}$ of mixed Hodge structures over $\QQ$. If we write $K_0(\cat{MHS}_{\bbS_n})$ for the Grothendieck group, there is a natural isomorphism $K_0(\cat{MHS}_{\bbS_n}) \otimes \QQ \cong K_0(\cat{MHS}) \otimes \mathrm{Cl}_{\QQ}(\bbS_n)$ where $\mathrm{Cl}_{\QQ}(\bbS_n)$ is the vector space of $\QQ$-valued class functions on $\bbS_n$. For an $\bbS_n$-variety $X$, define its \textit{$\bbS_n$-equivariant Serre characteristic} by 
\[ \cat{e}^{\bbS_n}(X) := \sum_{i} (-1)^i[H^i_c(X;\QQ)] \in K_0(\cat{MHS}) \otimes \mathrm{Cl}_{\QQ}(\bbS_n). \]
 Note that $\cat{e}^{\bbS_n}(X)$ specializes to the ordinary Serre characteristic $\cat{e}(X) := \cat{e}^{\bbS_0}(X)$, which in turn specializes to the $E$-polynomial of $X$, in the sense of \cite[Definition 2.1.4]{HR-V}.

To state our formula, we use the identification
\[ \prod_{n \geq 0} K_0(\cat{MHS}) \otimes \mathrm{Cl}_{\QQ}(\bbS_n)  \cong K_0(\cat{MHS}) \otimes \Lambda\]
where \(\Lambda =  \QQ[\![p_1, p_2, \ldots]\!]\) is the ring of symmetric functions over $\QQ$, and $p_i$ is the $i$th power sum symmetric function. We define generating functions
\[ \overline{\cat{M}}_{g, r, N} := \sum_{n, d} \cat{e}^{\bbS_n}(\Mbar_{g, n}(\Gr(r, N), d))q^d \quad \mbox{and} \quad \overline{\cat{Q}}^{\ve}_{g, r, N} := \sum_{n, d} \cat{e}^{\bbS_n}(\Qbar_{g, n}^{\ve}(\Gr(r, N), d))q^d \]
as elements of $K_0(\cat{MHS}) \otimes \Lambda [\![q]\!]$. We write $\cat{M}_{g, r, N}$ and $\cat{Q}_{g, r, N}^{\ve}$ for the analogous generating functions for the loci of stable maps and quasimaps with smooth domain curves. We also write \[\overline{\cat{Q}}_{g, r, N}^{0} : = \sum_{n, d} \cat{e}^{\bbS_n}(\Qbar_{g, n}(\Gr(r, N), d))q^d \quad \mbox{and} \quad \cat{Q}_{g, r, N}^0 := \sum_{n, d} \cat{e}^{\bbS_n}(\Q_{g, n}(\Gr(r, N), d))q^d\] for the corresponding generating functions for moduli spaces of stable quotients; these are obtained as the $\ve \to 0^+$ limits\footnote{To make this precise, we refer to the end of Definition \ref{defn:vequasimaps} for the relationship between stable quotients and $\ve$-stable quasimaps.} of $\overline{\cat{Q}}_{g, r, N}^\ve$ and $\cat{Q}_{g, r, N}^\ve$, respectively.

Our formula is given by certain $q$-deformations of the power sums, as we now explain. Recall the \textit{$q$-binomial coefficient}
\begin{equation}\label{eqn:q_binom_defn}
\qbinomopt[q]{N}{r} = \frac{(1 - q^{N})(1-q^{N - 1}) \cdots (1 - q^{N - r + 1})}{(1 - q^{r})(1- q^{r - 1}) \cdots (1 - q)}.
\end{equation}
In general, for an indeterminate $t$, we write $\qbinomopt[t]{N}{r}$ for the evaluation of the $q$-binomial coefficient at $q = t$. For example, we have the well-known formula
\[ \cat{e}(\Gr(r, N)) = \qbinomopt[\bbL]{N}{r} \]
where $\bbL = [H^2(\P^1;\QQ)] \in K_0(\cat{MHS)}$.

For fixed $\ve > 0$, we let $M_\ve = \lfloor1/\ve \rfloor$ be the largest integer bounded above by $1/\ve$. We define a $\QQ$-algebra automorphism \(\boldsymbol{B}_{r, \ve}: K_0(\cat{MHS}) \otimes \Lambda[\![q]\!] \to K_0(\cat{MHS}) \otimes \Lambda[\![q]\!]\) by
\begin{equation}\label{eqn:Bre}
    \boldsymbol{B}_{r, \ve}: p_j \mapsto p_j + \sum_{k = 1}^{M_{\ve}} \qbinomopt[\bbL^j]{r + k - 1}{k} \ q^{jk}
\end{equation}
We also define $\boldsymbol{B}_{r,0}$ by
\begin{equation}\label{eqn:Br0}
    \boldsymbol{B}_{r, 0}: p_j \mapsto p_j +  \frac{1}{\prod_{i = 0}^{r-1}(1 - (\bbL^{i}q)^{j})} - 1
\end{equation}
which is in fact the $\ve \to 0^+$ limit of $\boldsymbol{B}_{r, \ve}$. Our main theorem states that up to \textit{plethysm} with certain genus-zero contributions, the stable map and quasimap generating functions are related by $\boldsymbol{B}_{r, \ve}$. 

\begin{customthm}{A}\label{thm:main}
For any genus $g \geq 1$ and rational number $\ve \geq 0$, we have
\[\boldsymbol{B}_{r, \ve}\left[ \overline{\cat{M}}_{g, r, N} \circ \left(p_1 - \frac{\partial}{\partial p_1}\frac{\cat{M}_{0, r, N}}{\cat{e}(\Gr(r, N))}\right)\right] = \overline{\cat{Q}}_{g, r, N}^{\ve} \circ \left(p_1 - \frac{\partial}{\partial p_1}\frac{\cat{Q}_{0, r, N}^{\ve}}{\cat{e}(\Gr(r, N))}\right),\]
where $\circ$ denotes plethysm of symmetric functions.
\end{customthm}
As we recall in \S\ref{sec:symmetric_functions}, plethysm is an algorithmically computable operation on symmetric functions. The plethysms involved in Theorem \ref{thm:main} are invertible, and Bagnarol \cite{Bagnarol} has computed $\cat{M}_{0, r, N}$. We address the genus-zero case and determine $\cat{Q}_{0, r, N}^{\ve}$ explicitly in §\ref{sec:g0maps}. In particular, Theorem \ref{thm:main} gives a computable and invertible transformation between $\overline{\cat{M}}_{g, r, N}$ and $\overline{\cat{Q}}^{\ve}_{g, r, N}$.

We emphasize that it is very difficult to calculate the Serre characteristic of $\Qbar_{g, n}^{\ve}(\Gr(r, N), d)$ in general, due in part to geometric pathologies: it can have arbitrary singularities \cite{Vakil} and is usually reducible with components of excess dimension. Nevertheless, Theorem \ref{thm:main} reflects strong constraints on how the geometry of the moduli space changes with $\ve$, and in particular implies that the different $\overline{\cat{Q}}_{g, r, N}^{\ve}$ encode essentially equivalent information. Our result is new even when $r = 1$, which corresponds to the target $\P^{N - 1}$. In this case the formulas for $\boldsymbol{B}_{r, \ve}$ and $\boldsymbol{B}_{r, 0}$ simplify considerably. We obtain an alternative wall-crossing formula for $\P^{N-1}$ in Corollary \ref{cor:alternative_wall_crossing} by exploiting the existence of contraction morphisms
\[ \Mbar_{g, n}(\P^{N - 1}, d) \to \Qbar_{g, n}^{\ve}(\P^{
N-1
}, d), \]
which do not exist when $r > 1$.

\begin{table}[h]
\centering
\renewcommand{\arraystretch}{1.4}
\setlength{\tabcolsep}{10pt}
\begin{tabular}{|c|c|c|c|}
\hline
$d$ & $\chi(\Mbar_{1,0}(\P^{N-1},d))$ &$\chi(\Qbar_{1,0}^{1/2}(\P^{N-1},d))$ & $\chi(\Qbar_{1,0}(\P^{N-1},d))$ \\
\hline
$1$
& $4\binom{N}{2}$ & $2\binom{N}{1}$ 
& $2\binom{N}{1}$ \\
\hline
$2$
& $17\binom{N}{2}+24\binom{N}{3}$ & $4\binom{N}{1} + 2\binom{N}{2}$
& $4\binom{N}{1} + \binom{N}{2}$ \\
\hline
$3$
& $55\binom{N}{2}+247\binom{N}{3}+216\binom{N}{4}$ & $8\binom{N}{1}+3\binom{N}{2}+\binom{N}{3}$
& $6\binom{N}{1}+3\binom{N}{2}+\binom{N}{3}$ \\
\hline
\end{tabular}
\caption{The topological Euler characteristic of the moduli spaces of stable maps and $\ve$-stable quasimaps to $\P^{N-1}$ (the $r=1$ case) in genus one, for $\ve = 1/2$ and $\ve \to 0^+$. 
}
\label{table:data}
\end{table}

\begingroup
\renewcommand{\arraystretch}{1.5}
\setlength{\tabcolsep}{12pt}

\begin{table}[htbp]
\centering
\caption{The topological Euler characteristic of the moduli space of stable quotients $\Qbar_{1, 0}(\Gr(r, N), d)$, for small values of $r$ and $d$.}
\label{table:quotients_genus_one}
\begin{tabular}{|c|c|c|c|}
\hline
$d$ & $\chi(\Qbar_{1, 0}(\Gr(2, N), d))$ & $\chi(\Qbar_{1, 0}(\Gr(3, N), d))$ & $\chi(\Qbar_{1, 0}(\Gr(4, N), d))$\\
\hline
$1$ & $4\binom{N}{2}$ & $6\binom{N}{3}$ & $8\binom{N}{4}$ \\
\hline
$2$ & $12\binom{N}{2}+3\binom{N}{3}$ & $24\binom{N}{3}+6\binom{N}{4}$ & $40\binom{N}{4}+10\binom{N}{5}$ \\
\hline
$3$ & $28\binom{N}{2}+16\binom{N}{3}+4\binom{N}{4}$ & $76\binom{N}{3}+46\binom{N}{4}+10\binom{N}{5}$ & $160\binom{N}{4}+100\binom{N}{5}+20\binom{N}{6}$ \\
\hline
\end{tabular}
\label{table:chiQ10bar}
\end{table}

\endgroup

The operators $\boldsymbol{B}_{r, \ve}$ can be written in terms of plethysm as well. If we formally set $M_0 = \infty$, then for any $f \in K_0(\cat{MHS}) \otimes \Lambda[\![q]\!]$ and $\ve \geq 0$ we have
\begin{equation}
    \boldsymbol{B}_{r, \ve}(f) = f \circ \left(p_1 + \sum_{k = 1}^{M_\ve} \qbinomopt[\bbL]{r + k - 1}{k} q^k \right).
\end{equation} 
This formula has a geometric interpretation: since
\[ \qbinomopt[\bbL]{r + k - 1}{k} = \cat{e}(\Sym^{k}(\P^{r - 1})), \]
each operator $\boldsymbol{B}_{r,\ve}$ is plethysm with a truncation of the motivic zeta function of $\P^{r - 1}$. See also Remark \ref{rem:punctualquot}.

In \S\ref{sec:genus_one}, we explain how to calculate the $\bbS_n$-equivariant topological Euler characteristic of the moduli space $\Mbar_{1, n}(\Gr(r, N), d)$ (Proposition \ref{prop:chi_genus_one}), extending our previous work on the $r = 1$ case \cite{ks-genus1}. The calculation amounts to counting walks in the $1$-skeleton of the hypersimplex $\Delta(r, N)$, which encodes the combinatorics of a generic $\C^\star$-action on $\Gr(r, N)$. Via Theorem \ref{thm:main}, Proposition \ref{prop:chi_genus_one} determines the topological Euler characteristic of $\Qbar_{1, n}^{\ve}(\Gr(r, N), d)$ for arbitrary $\ve,n,r,N$, and $d$, see Corollary \ref{cor:top_chi_genus_one} and the preceding discussion. In principle, $\C^\star$-localization \cite[\S 5.1]{Toda} and the techniques of \cite{kspp} give graph-sum formulas for the topological Euler characteristic of $\Qbar_{g, n}^{\ve}(\Gr(r, N), d)$ in arbitrary genus; the point of \S \ref{sec:genus_one} is that the graph sums can be solved exactly in genus one. Sample calculations are provided in Tables \ref{table:data} and \ref{table:quotients_genus_one}.

\subsection{Outline of the proof}

The proof of Theorem \ref{thm:main} can be summarized as the following three main steps:
\begin{enumerate}
    \item reduce to comparing stable maps and $\ve$-stable quasimaps without rational tails;
    \item relate $\ve$-stable quasimaps to moduli spaces of weighted stable maps via relative Quot schemes;
    \item apply symmetric function theory to wall-cross between moduli spaces of weighted stable maps.
\end{enumerate}

Step (1) is handled by Lemma \ref{lem:nrtplethysm}, which  separates the contributions of maps without rational tails and genus-zero maps via plethysm of symmetric functions. This technique is an important step in several previous works on motivic invariants of moduli spaces of curves and maps \cite{semiclassicalremark, PaganiTommasi, ks-genus1, kspp}.

The main geometric input to the formula is in step (2), where we stratify the moduli space of $\ve$-stable quasimaps without rational tails into certain relative Quot schemes (Lemma \ref{lem:quot_strata_stacky}) of zero-dimensional quotient sheaves. Their motives are in turn expressed in Theorem \ref{thm:relquot-to-weightedstablemaps} in terms of moduli spaces of weighted stable maps constructed by Alexeev--Guy \cite{AlexeevGuy} and Bayer--Manin \cite{BayerManin}. We highlight two aspects of our calculations in this step.
\begin{enumerate}
  \item We determine the motive of the \textit{relative} Quot scheme of a family of smooth curves (Proposition \ref{prop:relquot}), building on previous works \cite{Bif89, BFP20, Ric20} for a curve over $\Spec \C.$ The formula passes through an independence result on the framing vector bundle in the relative setting (Lemma \ref{lem:indep-Quot}) as well as a stratification of the Quot scheme by iterated vector bundles over relative symmetric powers of the curve, akin to the complete filt schemes studied by Mochizuki \cite{Mochizuki}.
  \item The relative Quot scheme is taken over the Artin stack of \textit{prestable} maps without rational tails. We then impose a stability condition on the relative Quot scheme that matches both $\ve$-quasimap stability and weighted stable maps stability. Working with prestable maps is necessary to allow rational bridges that are stabilized by the presence of torsion points.
\end{enumerate}

We hence reduce to comparing motivic invariants of these quotients of moduli spaces of weighted stable curves, which is taken up in step (3). The wall-crossing formulas for weighted stable maps are encoded by fundamental transformations of symmetric functions (Corollary \ref{cor:serre_char_collisions}). Combining these transformations with the relative Quot scheme formula leads to the operators $\boldsymbol{B}_{r, \ve}$ defined earlier. 

Geometrically, the operators $\boldsymbol{B}_{r, \ve}$ and their $\ve\to 0^+$ limit transform weighted marked points with permutation actions to zero-dimensional quotients supported at the divisor underlying the markings. The moduli of these quotients is encoded in the powers of $\bbL$ appearing in the definition of $\boldsymbol{B}_{r, \ve}$. The powers of $q$ which appear in $\boldsymbol{B}_{r, \ve}$ have to do with tracking collisions of marked points and taking quotients by symmetric group actions, generalizing previous work of Kannan--Serpente--Yun \cite{hodgehl} on Hassett's moduli spaces of weighted stable curves \cite{Hassett}.

The final step in making Theorem \ref{thm:main} effective is to explain how to calculate and invert the plethysms appearing in the formula, which is handled by work of Getzler--Pandharipande \cite{GetzlerPandharipande} on Serre characteristics of moduli spaces of genus-zero stable maps to $\P^r$ and its extension to Grassmannians by Bagnarol \cite{Bagnarol}.

\subsection{Context and related work}
This article begins a program to compare motivic and topological invariants of the moduli spaces of stable maps and quasimaps, in line with the extensive literature on wall-crossing formulas for quasimap invariants \cite{CFK14, CFK17, CFK20, Zhou2021}, which serves as a main inspiration. While the techniques and results of this work are not \textit{a priori} related to the aforementioned wall-crossing formulas for quasimap invariants, both calculations involve the combinatorics of rational tails and wall-crossing via weighted markings. Theorem \ref{thm:main} has a similar shape to the enumerative wall-crossing formulas: see for example \cite[Corollary 1.11.3]{Zhou2021}. It would be interesting to see whether the techniques of this work have any application to enumerative invariants. We expect that relative versions of previous results on the cohomology of tautological bundles over Quot schemes \cite{TautQuot1, TautQuot2} would be relevant to this question.

 The study of the cohomology of moduli spaces of stable maps to Grassmannians and flag varieties has attracted significant interest \cite{Manin98, LopezMartin, opreaflag, Zhuang2018, Bagnarol}. Especially important for us is work of Getzler and Pandharipande introducing symmetric functions in this setting in order to calculate the $\bbS_n$-equivariant Serre characteristic of $\Mbar_{0, n}(\P^{r}, d)$ \cite{GetzlerPandharipande}. Bagnarol \cite{Bagnarol} built on their work to determine the $\bbS_n$-equivariant Serre characteristic of $\Mbar_{0,n}(\Gr(r,N),d)$. His work also expresses the Serre characteristic of the interior $\M_{0,n}(\Gr(r,N),d)$ in terms of Quot schemes (both of rank-$r$ sheaves and of points) on $\P^1.$ The comparison with the rank-$r$ Quot scheme has been an effective technique in earlier works \cite{Stromme, BDW, BERTRAM1997, opreaflag} on mapping spaces to the Grassmannian, while Popa--Roth \cite{PopaRoth} used stable maps to Grassmannians to probe the geometry of Quot schemes. Much less is known about motivic invariants of moduli spaces of quasimaps, though Cooper \cite{cooper} computed the Poincar\'e polynomial of $\Qbar_{1,0}(\Gr(1, 1), d)$.

As discussed above, the relative Quot scheme of $0$-dimensional quotients on a family of smooth curves plays a fundamental role in the present work. Our calculation of its motive closely follows the approach in \cite{BFP20, Ric20, MonavariRicolfi} on the ordinary Quot scheme of $0$-dimensional quotients on a variety. The parallel study of the motives of hyperquot schemes of curves \cite{Chen2001, Monavari2022, MonavariRicolfi2025} may be useful for extending the wall-crossing formulas in this work to generalized flag varieties.

The operators $\boldsymbol{B}_{r, \ve}$ and $\boldsymbol{B}_{r, 0}$ build on our previous works \cite{VirtualHodge, KannanPic} which calculate the Serre characteristics of moduli spaces of maps and line bundles by using symmetric function theory to reduce to the Serre characteristics of $\M_{g,n}$. There is an extensive body of literature on applications of symmetric function theory to Euler characteristics of moduli spaces of curves; see e.g. \cite{GetzlerGenusZero, GetzlerKapranov, GetzlerSemiClassical, semiclassicalremark, BergstromMinabe1, BergstromMinabe2, ks-genus1, kspp} for an incomplete list of references.

\subsection{Plan of the paper} We begin by recalling the definitions of the relevant moduli problems in \S\ref{sec:moduli_definitions}. In \S\ref{sec:symmetric_functions}, we set up the symmetric function theory which will be used throughout the paper, and apply it to moduli spaces of weighted stable maps. In \S\ref{sec:relative_quot}, we study motives of relative Quot schemes related to the geometry of the moduli space of quasimaps, and establish our wall-crossing formula for the no-rational-tails loci. We study the genus-zero case and finish the proof of Theorem \ref{thm:main} in \S\ref{sec:g0maps}. In \S \ref{sec:genus_one}, we conclude the paper by explaining how to calculate topological Euler characteristics of spaces of the quasimap moduli spaces explicitly in genus one.

\subsection*{Acknowledgments}
We are grateful to Dhruv Ranganathan for encouraging us to pursue this direction and for helpful comments on a previous draft of this article. SK is supported by NSF DMS-2401850. TS is supported by Cambridge Trust international scholarship and BICMR.

\section{Moduli spaces of maps}\label{sec:moduli_definitions}
We continue to let $\Gr(r, N)$ denote the Grassmannian of $r$-planes in $\C^N$. We recall moduli spaces of $\ve$-stable quasimaps to $\Gr(r, N)$ as constructed by Toda \cite{Toda} and Ciocan-Fontanine--Maulik--Kim \cite{quasimaps_gitquotients}, with the Kontsevich moduli space of stable maps \cite{KontsevichTorus} and the Marian--Oprea--Pandharipande moduli space of stable quotients \cite{MOP} as special cases. We will also recall the moduli spaces of \textit{weighted} stable maps constructed by Alexeev--Guy \cite{AlexeevGuy} and Bayer--Manin \cite{BayerManin}, following earlier work of Hassett \cite{Hassett}. Ultimately, we will stratify the moduli spaces of stable quasimaps by iterated vector bundles over strata of finite quotients of moduli spaces of weighted stable maps.
 \subsection{Quasimaps} Let $\ve$ be a positive rational number. In \cite{quasimaps_gitquotients}, a moduli space of $\ve$-stable quasimaps to a large class of GIT quotients is constructed. We study their construction in the special case of the Grassmannian $\Gr(r, N)$, via the GIT quotient presentation
\[\Gr(r, N) = \mathrm{Hom}(\C^r, \C^N) \sslash \mathrm{GL}_r(\C). \] In this case, the moduli space was first constructed by Toda \cite{Toda}.
\begin{defn} \label{defn:vequasimaps}
     The moduli stack
$\overline{\Q}_{g, n}^{\ve}(\Gr(r, N), d)) $
of $\ve$-stable quasimaps to the Grassmannian $\Gr(r, N)$ parametrizes families of tuples
\[ [(C, p_1, \ldots, p_n), 0 \to \calV \to \calO^N_C \to \calQ \to 0] \]
where
\begin{enumerate}
    \item $(C, p_1, \ldots, p_n)$ is an $n$-pointed prestable curve of genus $g$;
    \item $\calV$ is a locally free sheaf of rank $r$ on $C$ of degree $-d$;
    \item the quotient sheaf $\calQ$ is locally free at the nodes and markings of $C$;
    \item the $\QQ$-line bundle
    \[ \omega_C(\sum_i p_i) \otimes (\wedge^{r} \calV^*)^{\otimes \ve} \]
    is ample, where $\calV^*$ denotes the dual of $\calV$.
\end{enumerate}

In simple terms, the stability condition (4) can be summarized as follows:
\begin{itemize}
    \item for every $p \in C,$ the length $\ell(p)$ of the torsion part of $\calQ$ at $p$ must satisfy $\ve \cdot \ell(p) \leq 1$,
    \item for any irreducible component $T$ of $C$, if we let $n_T$ denote the number of special points (markings and preimages of nodes) which are contained in the normalization of $T$, then we have
    \[ 2g(T) - 2 + n_T + \ve\deg(\calV|_{T}) > 0. \]
\end{itemize}

Let $\Q^{\ve}_{g,n}(\Gr(r,N),d)\subset\Qbar^{\ve}_{g,n}(\Gr(r,N),d)$ be the locus of quasimaps with smooth domain curves.
\end{defn}

From the definition, we see that the spaces $\overline{\Q}_{g, n}^{\ve}(\Gr(r, N), d))$ are isomorphic for all ${\ve\in (\frac{1}{k+1}, \frac{1}{k})}.$ This class of moduli spaces interpolates between familiar moduli stacks:
\begin{enumerate}
    \item the Kontsevich moduli space $\Mbar_{g,n}(\Gr(r, N),d)$ of stable maps to $\Gr(r,N)$ coincides with $\Qbar_{g,n}^\ve(\Gr(r,N),d)$ for any $\ve>1$;
    \item the Marian--Oprea--Pandharipande moduli space of stable quotients $\Qbar_{g,n}(\Gr(r, N),d)$ coincides with $\Qbar_{g,n}^\ve(\Gr(r,N),d)$ for any $\ve<1/d.$
\end{enumerate}
The two cases correspond to requiring $\mathcal{Q}$ to be torsion-free and to acquire torsion of arbitrary length, respectively.

\subsection{Stratifications of the quasimap moduli space}
Quasimaps parametrize morphisms from nodal curves to the quotient stack $[\mathrm{Hom}(\C^r, \C^N)/\mathrm{GL}_r(\C)]$ such that a dense open subset of the domain curve -- which contains the nodes and markings -- lands in the GIT quotient $\Gr(r,N).$ The finite subset of the curve landing in the unstable locus is captured by the torsion part of the quotient sheaf $\calQ$ parametrized by $\Qbar^{\ve}_{g,n}(\Gr(r,N),d)$ as defined above. We stratify the moduli stacks by the length of the torsion subsheaf.

\begin{defn}\label{defn:torsionstrat}
    Let \[\Qbar_{g,n}^{\ve, k}(\Gr(r,N),d)\subset \Qbar_{g,n}^{\ve}(\Gr(r,N),d)\] be the locally closed substack parametrizing tuples $[(C,p_1,\dots, p_n), 0\to \calV\to \calO^N_C\to \calQ\to 0]$ as in Definition \ref{defn:vequasimaps}, such that the torsion subsheaf of $\calQ$ has length $k.$ Let $\Q^{\ve,k}_{g,n}(\Gr(r,N),d)$ be the intersection $\Qbar^{\ve,k}_{g,n}(\Gr(r,N),d)\cap \Q^{\ve}_{g,n}(\Gr(r,N),d).$
\end{defn}

\begin{defn}
    Let $C$ be a prestable curve of genus $g$. We say an irreducible component $T \subset C$ is a \textit{rational tail} if $T$ is a smooth rational component and
    \[ |T \cap \overline{C\smallsetminus T}| = 1; \]
    i.e. $T$ meets the rest of $C$ at a single node.
\end{defn}

A prestable genus-$g$ curve $C$ has no rational tails if and only if its genus-decorated dual graph has no proper subgraphs of genus $g$.

\begin{defn}
    For $g>0,$ let \[\Qbar^{nrt,\ve}_{g,n}(\Gr(r,N),d)\subset \Qbar^{\ve}_{g,n}(\Gr(r,N),d)\] be the locally closed substack parametrizing $\ve$-stable quasimaps whose source curves do not have any rational tails.

    We use $\Qbar^{k, nrt,\ve}_{g,n}(\Gr(r,N),d)$ to denote the intersection $\Qbar^{k,\ve}_{g,n}(\Gr(r,N),d)\cap \Qbar^{nrt, \ve}_{g,n}(\Gr(r,N),d).$
\end{defn}

\subsection{Weighted stable maps}
Let $w = (w_1, \ldots, w_n)$ be a vector of non-negative rational weights, with each $w_i \in (0, 1]$. 
\begin{defn}[\cite{AlexeevGuy, BayerManin}]\label{defn:wtstablemaps}
    The moduli space of weighted stable maps
\(\Mbar_{g, w}(\Gr(r, N), d)\)  
parameterizes tuples
\((C, q_1, \ldots, q_n, f)\) 
where 
\begin{enumerate}
    \item $C$ is a prestable curve of genus $g$;
    \item the $q_i \in C$ are smooth marked points;
    \item if \(q_{i_1} = q_{i_2} = \cdots = q_{i_j}\) for some indices $i_1, \ldots, i_j,$ then
    \( w_{i_1} + \cdots + w_{i_j} \leq 1;\)
    \item $f: C \to \Gr(r, N)$ is a degree-$d$ morphism, such that if $f$ contracts an irreducible component $T$ of $C$, then
    \[2g(T) - 2 + \nu_T + \sum_{i:p_i \in T} w_i > 0 \]
    where $\nu_T$ denotes the number of preimages of nodes in the normalization of $T$. 
\end{enumerate}
\end{defn}
Let \( \M_{g, w}(\Gr(r,N), d) \subset \Mbar_{g, w}(\Gr(r, N), d) \) be the locus of maps with smooth domain curves.

A special role will be played by the following weights.

\begin{defn}
    Let $\Mbar_{g, m|\ve^n}(\Gr(r, N), d)$ denote the moduli space $\Mbar_{g,w}(\Gr(r,N),d)$ when \[w = (\underbrace{1, \ldots, 1}_{m}, \underbrace{\ve, \ldots, \ve}_{n}) \] for a rational number $\ve \in (0, 1)$. This moduli space parameterizes $(m + n)$-pointed stable maps, such that only at most $1/\ve$ of the final $n$ points may coincide with one another. When $n = 0$, we recover
\(\Mbar_{g, m|\ve^0}(\Gr(r, N), d) = \Mbar_{g, m}(\Gr(r, N), d). \)
We also set
\[ \Mbar_{g, m|\ve^n}^{nrt}(\Gr(r, N), d) \subset \Mbar_{g, m|\ve^n}(\Gr(r, N), d) \]
for the locus of weighted-stable maps from curves with no rational tails.
\end{defn}

\section{Symmetric functions and weighted stable maps}\label{sec:symmetric_functions}
We now outline the basic properties of symmetric functions that we will use for our calculations. Macdonald's book \cite{Macdonald} is a thorough reference; see also Getzler--Kapranov \cite{GetzlerKapranov}.

Let $\Lambda = \QQ[\![ p_1,p_2,\ldots]\!]$ be the ring of degree-completed symmetric functions over $\QQ.$ Here, $p_i$ is the $i$th power sum symmetric function and has degree $i$. The subspace $\Lambda_n \subset \Lambda$ of homogeneous degree-$n$ symmetric functions is identified with $\mathrm{Cl}_{\QQ}(\bbS_n),$ the space of $\QQ$-valued class functions on $\bbS_n$ via the \textit{Frobenius characteristic}: if $V$ is a finite dimensional $\bbS_n$-representation over $\QQ$, then the Frobenius characteristic of $V$ is defined by
\[ \ch_n(V) := \frac{1}{n!} \sum_{\sigma \in \bbS_n} \mathrm{Tr}(\sigma|V) \prod_{i > 0 } p_i^{\lambda_i(\sigma)}, \]
where $\lambda_i(\sigma)$ is the number of $i$-cycles in $\sigma$.

Let $\cat{MHS}_{\bbS_n}$ be the category of $\bbS_n$-equivariant mixed Hodge structures over $\QQ.$ There is an isomorphism \( K_0(\cat{MHS}_{\bbS_n}) \otimes \QQ \cong K_0(\cat{MHS}) \otimes \Lambda_n \)
by \cite[Theorem 3.2]{GetzlerPreprint}: any $\bbS_n$-equivariant rational mixed Hodge structure $M$ decomposes uniquely as
\[M = \bigoplus_{\lambda \vdash n} M_{\lambda} \otimes \left(E_{\lambda}^{\bigoplus a_{M,\lambda}}\right) \]
for some nonnegative integers $a_{M, \lambda}$, where $M_{\lambda}$ is a rational mixed Hodge structure and $E_{\lambda}$ is the irreducible representation (Specht module) of $\bbS_n$ corresponding to the partition $\lambda$. The isomorphism $K_0(\cat{MHS}_{\bbS_n}) \to K_0(\cat{MHS}) \otimes \Lambda_n$ is determined by
\begin{equation}\label{eqn:frob_char_mhs}
    [M] \mapsto \ch_n(M) := \sum_{\lambda \vdash n} a_{M, \lambda}[M_{\lambda}] \otimes s_{\lambda},
\end{equation}
where $s_{\lambda} \in \Lambda_n$ is the Schur function corresponding to $\lambda$. For a $\bbS_n$-equivariant mixed Hodge structure $M$, we refer to $\ch_n(M)$ as the Frobenius characteristic of $M$.

\subsection{Multisymmetric functions and the coproduct}
For an integer $s > 0$, we have
\[ \Lambda^{\otimes s} = \QQ[\![p_j^{(1)}, p_j^{(2)}, \ldots, p_j^{(s)} | j> 0 ]\!] \]
where
\[p_i^{(k)} = 1 \otimes \cdots \otimes \underbrace{p_i}_{k\text{th entry}} \otimes \cdots \otimes 1 \in \Lambda^{\otimes s}.\]
One basic structure on $\Lambda$ is the coproduct
\[ \Delta: K_0(\cat{MHS}) \otimes \Lambda \to K_0(\cat{MHS}) \otimes\Lambda^{\otimes 2}\] defined by \[ \Delta \ch_n(M) = \sum_{m = 0}^n \ch_{m,n-m}(\Res^{\bbS_n}_{\bbS_m \times \bbS_{n - m}} M). \]
It has the concrete formula given by extending the assignment \( \Delta: p_j \mapsto p_j^{(1)} + p_j^{(2)}. \) 

We extend this to the multisymmetric setting with $s$-tuples of symmetric functions for an integer $s>0.$ More precisely, the discussion above extends to an isomorphism
\[K_0(\cat{MHS}) \otimes \Lambda^{\otimes s}  \cong \bigoplus_{n_1, \ldots, n_s \geq 0} K_0(\cat{MHS}) \otimes \mathrm{Cl}_{\QQ}(\bbS_{n_1} \times \cdots \times \bbS_{n_s})\]
assembled from the following: let $(\Lambda^{\otimes s})_{n_1, \ldots, n_s}$ be the multidegree $(n_1, \ldots, n_s)$-part of $\Lambda^{\otimes s}$, we have
\( K_0(\cat{MHS}) \otimes \Lambda^{\otimes s}_{n_1,\ldots, n_s} \cong K_0(\cat{MHS}_{\bbS_{n_1} \times \cdots \times \bbS_{n_s} }) \otimes \QQ \) again by \cite[Theorem 3.2]{GetzlerPreprint}, given by the Frobenius characteristic \(\ch_{n_1, \ldots, n_s}(M)  \in K_0(\cat{MHS}) \otimes\Lambda^{\otimes s}_{n_1, \ldots, n_s}.\)

\begin{defn}\label{defn:Deltas}
    For $s>0,$ define \(\Delta^{s}: K_0(\cat{MHS}) \otimes\Lambda \to K_0(\cat{MHS})\otimes\Lambda^{\otimes s}\) by \[ \Delta^s(\ch_n(M)) = \sum_{\substack{{n_1 + \cdots +n_s = n}\\{n_i \geq 0 \forall i}}} \ch_{n_1, \ldots, n_s}(\mathrm{Res}^{\bbS_n}_{\bbS_{n_1} \times \cdots \times \bbS_{n_s}} M) \in \Lambda^{\otimes s}. \] Its variant for bisymmetric functions is \(\Delta_2^{s}: K_0(\cat{MHS}) \otimes\Lambda \otimes \Lambda \to K_0(\cat{MHS}) \otimes \Lambda \otimes \Lambda^{s}\) defined as \begin{equation}\label{eqn:delta_2}
    \Delta_2^s(\ch_{m,n}(M)) = \sum_{\substack{{n_1 + \cdots + n_s = n}\\{n_i \geq 0\,\forall\,i}}} \ch_{m, n_1, \ldots, n_s}(\Res^{\bbS_m \times \bbS_n}_{\bbS_{m} \times \bbS_{n_1} \times \cdots \times \bbS_{n_s}}M ).
\end{equation}

Under the isomorphism \(K_0(\cat{MHS})\otimes\Lambda^{\otimes s}\cong K_0(\cat{MHS})\otimes \QQ[\![p_j^{(1)}, \ldots, p_{j}^{(s)}\mid j>0 ]\!],\) a concrete formula of $\Delta^{s}$ is given by extending $p_j\mapsto \sum_{i = 1}^s p_j^{(i)}.$
\end{defn}

\subsection{Invariants} For an $\bbS_n$-equivariant mixed Hodge structure $M$, let $M_{\bbS_n} \subset M$ denote the subspace of $\bbS_n$-invariants. We can calculate the class $[M_{\bbS_n}] \in K_0(\cat{MHS})$ from $\ch_n(M) \in \Lambda$: if $q$ is an indeterminate, we have
\[ [M_{\bbS_n}] \cdot q^n =  \ch_n(M)|_{p_i \to q^i\,\forall\,i}. \]
While setting $q = 1$ recovers $[M_{\bbS_{n}}]$, the substitution $p_i \to q^i$ will be useful for keeping track of degrees of mapping spaces later on. 

More generally, for an $\bbS_{n_1} \times \cdots \times \bbS_{n_s}$-equivariant mixed Hodge structure $M,$ we have
\[ [M_{\bbS_{n_1} \times \cdots \times \bbS_{n_s}}] \cdot q^{\sum_{j} n_j} = \ch_{n_1, \ldots, n_s}(M)|_{p_i^{(j)} \mapsto q^i \,\forall\,i} \in K_0(\cat{MHS)} \cdot q^{\sum_j k_j}. \]
The invariants of $M$ under the subgroup
\(\bbS_{n_2} \times \cdots \times \bbS_{n_s} \subset \bbS_{n_1} \times \cdots \times \bbS_{n_s} \) is an $\bbS_{n_1}$-equivariant mixed Hodge structure, which will be useful for us later on. Its character is computed by the formula
\begin{equation}\label{eqn:multisymmetric_invariants}
    [M_{\bbS_{n_2} \times \cdots \times \bbS_{n_s}}] \cdot q^{\sum_{j} n_j} = \ch_{n_1, \ldots, n_s}(M)|_{\substack{{p_i^{(1)} \mapsto p_i\,\forall\, i>0}\\{p_i^{(j)} \mapsto q^i \,\forall\,i>0,\,j>1}}} \in K_0(\cat{MHS};\bbS_{n_1}) \cdot q^{\sum_j n_j}
\end{equation}
\subsection{Plethysm and graded $\bbS$-spaces} \label{subsec:plethysmgraded} Consider the power series ring $K_0(\cat{MHS}) \otimes \Lambda[\![q]\!]$: it is bigraded, and elements of bidegree $(n, d)$ are precisely those of the form $f\cdot q^d$ where \[f \in K_0(\cat{MHS}) \otimes\Lambda = K_0(\cat{MHS}) \otimes \QQ[\![p_1, p_2, \ldots]\!]\]
is homogeneous of degree $n$ in the $p_i$'s (here, recall that $p_i$ has degree $i$). Let
\[ (K_0(\cat{MHS)} \otimes\Lambda[\![q]\!])^* \subset K_0(\cat{MHS) \otimes}\Lambda[\![q]\!]  \]
denote the subspace with no bidegree $(0,0)$-terms.
\begin{defn}\label{def:Adams}
For each $n > 0$, let \( \psi_n : K_0(\cat{MHS}) \otimes \Lambda[\![q]\!] \to  K_0(\cat{MHS}) \otimes\Lambda[\![q]\!] \) denote the $\QQ$-algebra homomorphism determined as follows:
\begin{enumerate}
    \item $\psi_n(p_m) = p_{mn}$ for all $m$;
    \item $\psi_n(q) = q^n$;
    \item for any $[M] \in K_0(\cat{MHS})$, the element $\psi_n([M]) \in K_0(\cat{MHS})$ is determined recursively by $\psi_1([M]) = [M]$ and the formula
    \[ n[\Sym^n(M)] = \psi_n([M]) + \psi_{n-1}([M]) [\Sym^{1}(M)] + \cdots + \psi_1([M]) [\Sym^{n-1}(M) ].\]
\end{enumerate}
\end{defn}
The homomorphisms $\psi_n$ are the Adams operations for the natural $\lambda$-ring structure on $ K_0(\cat{MHS}) \otimes\Lambda[\![q ]\!]$. With the $\psi_n$ in hand, we can now give the algebraic definition of plethysm.
\begin{defn}\label{defn:plethysmgraded}
    Plethysm, denoted $\circ$, is the associative operation
    \[   K_0(\cat{MHS}) \otimes\Lambda[\![q]\!] \times  ( K_0(\cat{MHS}) \otimes\Lambda[\![q]\!])^* \to  \Lambda[\![q]\!]  \]
    characterized by the following properties:
    \begin{enumerate}
        \item For any $g \in  (K_0(\cat{MHS}) \otimes\Lambda[\![q]\!])^*$, the map $ K_0(\cat{MHS}) \otimes\Lambda[\![q]\!] \to  K_0(\cat{MHS}) \otimes\Lambda[\![q]\!]$ by $f \mapsto f \circ g$ is a $\QQ$-algebra homomorphism;
        \item For any $g \in ( K_0(\cat{MHS}) \otimes\Lambda[\![q]\!])^*$ and $n > 0$, we have $p_n \circ g = \psi_n(g)$.
    \end{enumerate}
\end{defn}

\begin{rem}\label{remark:plethysm}
    Note that the plethysm $f \circ g$ makes sense even if $g \in K_0(\cat{MHS}) \otimes \Lambda[\![q]\!]$ has bidegree $(0,0)$, as long as the bidegree-$(n, d)$ part of $f$ vanishes for $n \gg 0$.
\end{rem}
Plethysm can be interpreted in terms of composition of polynomial functors via Schur--Weyl duality. It has a concrete combinatorial interpretation in terms of \textit{$\bbS$-spaces}, as developed by Getzler--Pandharipande \cite{GetzlerPandharipande}.
\begin{defn}
    An $\bbS$-space $\calX$ is an $\bbS_n$-variety $\calX(n)$ for each $n\geq 0$. A \textit{graded} $\bbS$-space $\calX$ is an $\bbS_n$-variety $\calX(n, d)$ for all $n,d \geq 0$.
\end{defn}

For ease of notation, we consider $\bbS$-spaces as graded $\bbS$-spaces concentrated in degree $d = 0$. The \textit{Serre characteristic} of a graded $\bbS$-space $\calX$ is defined by
\[ \cat{e}^{\bbS}(\calX) := \sum_{n, d} \cat{e}^{\bbS_n}(\calX(n,d)) q^d \]
where $\cat{e}^{\bbS_n}(\calX(n, d))$ denotes the $\bbS_n$-equivariant Serre characteristic of the $\bbS_n$-variety $\calX(n, d)$.

The box product of graded $\bbS$-spaces $\calX$ and $\calY$ is defined by 
\[ (\calX \boxtimes \calY)(n, d) := \coprod_{e = 0}^d \coprod_{m = 0}^n \Ind_{\bbS_m \times \bbS_{n - m}}^{\bbS_{n}} \calX(m, e) \times \calY(n - m, d-e). \]
If $\calX$ and $\calY$ are graded $\bbS$-spaces with either $\calX(n, d) = \varnothing$ for $n\gg 0$ or $\calY(0,0) = 0$, then we can define a graded $\bbS$-space $\calX \circ \calY$ by
\begin{equation}\label{eqn:comp}
(\calX \circ \calY)(n, d) = \bigoplus_{e = 0}^d\bigoplus_{k > 0} \frac{\calX(k, e) \times \calY^{\boxtimes k}(n, d-e)}{\bbS_k}.
\end{equation}

Let $K_0(\cat{Var},\bbS)$ be the Grothendieck group of $\bbS$-spaces, then the box product $\boxtimes$ makes $K_0(\cat{Var},\bbS)$ into a ring. The Grothendieck ring of \textit{graded} $\bbS$-spaces is in turn $K_0(\cat{Var},\bbS)[\![q]\!]$. The Serre characteristic defines a ring homomorphism
\[ \cat{e}^{\bbS}: K_0(\cat{Var},\bbS)[\![q]\!] \to K_0(\cat{MHS}) \otimes \Lambda[\![q]\!] \]
which takes composition of $\bbS$-spaces to plethysm in $\Lambda[\![q]\!]$, respectively:
\[ \cat{e}^{\bbS}(\calX \circ \calY) = \cat{e}^{\bbS}(\calX) \circ \cat{e}^{\bbS}(\calY). \]
See \cite[\S 5]{GetzlerPandharipande} for more details on the Serre characteristic. 
\subsection{Bisymmetric plethysm and weighted stable maps} For $\ell > 0$, a  \textit{graded $\bbS^\ell$-space} is a sequence of varieties $\calX(n_1, \ldots, n_\ell, d)$ for each tuple $(n_1, \ldots, n_\ell, d)$ of nonnegative integers, such that $\calX(n_1, \ldots, n_\ell, d)$ carries an action of $\bbS_{n_1} \times \cdots \times \bbS_{n_\ell}$. Relevant to us will be the cases $\ell = 1,2.$

We introduce the graded $\bbS^\ell$-spaces of interest and give plethystic formulas that compare their equivariant Serre characteristics. The techniques follow previous work on the equivariant E-polynomials of Hassett spaces \cite{hodgehl}.

\begin{defn}\label{defn:catMbar}
    We define graded $\bbS$-spaces
    \[\Mbar_{g,r,N}: (n,d)\mapsto  \Mbar_{g, n}(\Gr(r, N), d),\]
    \[\Mbar_{g, r, N}^{nrt}: (n,d)\mapsto \Mbar_{g, n}^{nrt}(\Gr(r, N), d).\] For each $\ve>0,$ we define graded $\bbS^{2}$-spaces \[\Mbar_{g, r, N}^{\ve}: (m, n, d)\mapsto \Mbar_{g, m|\ve^n}(\Gr(r, N), d),\] \[\Mbar_{g, r, N}^{\ve,nrt}: (m, n, d) \mapsto \Mbar_{g, m|\ve^n}^{nrt}(\Gr(r, N), d),\] \[\Mbar_{g, r, N}^{0}: (m, n, d)\mapsto \Mbar_{g, m|\ve^n}^{nrt}(\Gr(r, N), d) \text{ for any }\ve<1/n.\]
    Note that the choice of $\ve$ in defining $\Mbar_{g, r, N}^{0}$ depends on the $\bbS$-grading $n.$
    Taking the Serre characteristic recovers
    \(\cat{e}^{\bbS}(\Mbar_{g, r, N}) = \overline{\cat{M}}_{g, r, N}\) from the introduction.
    We also define \(\overline{\cat{M}}^{nrt}_{g, r, N} : = \cat{e}^{\bbS}(\Mbar_{g, r, N}^{nrt}). \)
\end{defn}

The following composition operations between graded $\bbS^2$- and $\bbS$-spaces extend the definition from \cite{hodgehl} in the ungraded case.

\begin{defn}
    Let $\calA$ be a graded $\bbS \times \bbS$-space and $\calB$ a graded $\bbS$-space, we define \[(\calA \circ_2 \calB)(m, n, d) = \coprod_{e = 0}^d \coprod_{k > 0} \frac{\calX(m, k, e) \times \calB^{\boxtimes k}(n, d-e)}{\bbS_k}. \]

    Upon taking Serre characteristics, the operations descend to plethysm operations \[\circ_1, \circ_2: K_0(\cat{MHS) \otimes}\Lambda^{\otimes2}[\![ q]\!] \times (K_0(\cat{MHS}) \otimes \Lambda[\![ q ]\!])^* \to K_0(\cat{MHS}) \otimes \Lambda^{\otimes 2}[\![q]\!],\] such that $\cat{e}^{\bbS \times \bbS}(\calA \circ_i \calB) = \cat{e}^{\bbS \times \bbS}(\calA) \circ_i \cat{e}^{\bbS}(\calB).$ 
    
    The plethysm $\circ_i$ is determined by prescriptions \( p_n^{(j)} \circ_i f = p_n^{(j)}\quad\mbox{and}\quad p_n^{(i)} \circ_i f = \psi_n(f)^{(i)}\) and that $f \mapsto f \circ_i g$ defines a $\QQ$-algebra homomorphism for fixed $g \in  (K_0(\cat{MHS}) \otimes\Lambda[\![q]\!])^*,$ where $\{i,j\} = \{1,2\}.$ 
\end{defn}

\begin{defn}
    Let $\sigma_k$ be the graded $\bbS$-space supported in degree $(k, 0)$, where it is $\Spec \C$ with trivial $\bbS_k$-action. Let $\calT_{j}=\bigsqcup_{k=1}^j \sigma_k$ and $\calT_{\infty} = \bigsqcup_{k=1}^\infty \sigma_k.$
    
    We have $\mathsf{e}^{\bbS}(\sigma_k) = h_k,$ the $\bbS_k$-character of the trivial representation. For $\sigma\in \bbS_k,$ let $\lambda_i(\sigma)$ denote its number of $i$-cycles, then Frobenius characteristic formula gives \[h_k = \frac{1}{k!} \sum_{\sigma \in \bbS_k} \prod_{i > 0} p_i^{\lambda_i(\sigma)}.\]
\end{defn}

We recall the plethystic exponential and its truncations for later use.

\begin{defn}
    For any integer $\ell > 0$ and $f \in (K_0(\cat{MHS}) \otimes \Lambda [\![q]\!])^*$, we define
    \[\Exp_{\leq \ell}(f) = \sum_{k = 1}^{\ell} h_k \circ f, \text{ and }\Exp (f) = \sum_{k > 0} h_k \circ f.\]\end{defn}
With these definitions, we have \(\cat{e}^{\bbS}(\calT_j) = \Exp_{\leq j}(p_1)\) and \(\cat{e}^{\bbS}(\calT_{\infty}) = \Exp(p_1). \)
For the next lemma, we will need an enrichment of the coproduct $\Delta$ on $K_0(\cat{MHS}) \otimes \Lambda[\![q]\!]$ to an operation on $\bbS$-spaces. 
\begin{defn}
For a graded $\bbS$-space $\calX$, set \( \Delta \calX (m, n, d) := \Res_{\bbS_m \times \bbS_n}^{\bbS_{m + n}} \calX(m +n, d). \)
This defines a map of Grothendieck rings
\( \Delta: K_0(\cat{Var},\bbS)[\![q]\!] \to K_0(\cat{Var}, \bbS \times \bbS)[\![q]\!] ,\)
which specializes to the coproduct $\Delta$ on $K_0(\cat{MHS}) \otimes \Lambda [\![ q]\!]$ (Definition \ref{defn:Deltas}) upon taking Serre characteristics.
\end{defn}
\begin{lem}\label{lem:Grothendieck_ring_collisions}
    Fix $\ve > 0$ and set $M_{\ve} = \lfloor1/\ve \rfloor$. We have
    \[ [\Mbar^{\ve,nrt}_{g, r, N}] = [\Delta \Mbar^{nrt}_{g, r, N}] \circ_2 \calT_{M_{\ve}} \]
    as elements of $K_0(\cat{Var};\bbS \times \bbS)$. We also have
    \[ [\Mbar^{0,nrt}_{g, r, N}] = [\Delta\Mbar^{nrt}_{g, r, N}] \circ_2 \calT_{\infty}. \]
\end{lem}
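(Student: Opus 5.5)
We prove the identity by writing down an explicit stratification of $\Mbar^{\ve,nrt}_{g,m|\ve^n}(\Gr(r,N),d)$ whose strata are, equivariantly, the pieces of $[\Delta\Mbar^{nrt}_{g,r,N}]\circ_2\calT_{M_\ve}$ in bidegree $(m,n,d)$. This is the argument of \cite{hodgehl} for Hassett spaces, carried out over stable maps.

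The key geometric observation concerns a weighted stable map $(C,q_1,\dots,q_{m+n},f)$ with no rational tails and with weights $(1^m,\ve^n)$. Its coarse data can be recorded as follows.
\begin{itemize}
\item Record the set partition of the light markings $\{m+1,\dots,m+n\}$ given by which of them coincide. Every block has size at most $M_\ve$, since coinciding light points need total weight at most $1$.
\item Replace each block by a single marking of weight $1$. This gives an $(m+k)$-pointed map, where $k$ is the number of blocks. The new markings are pairwise distinct and also distinct from the heavy markings, because a light point may not coincide with a weight-$1$ point.
\end{itemize}

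We must check that this $(m+k)$-pointed map is an honest stable map with no rational tails, and conversely. Stability is preserved for two reasons.
\begin{itemize}
\item Non-contracted components impose no condition.
\item On a contracted component $T$ of genus $g(T)$, the condition $2g(T)-2+\nu_T+\sum w_i>0$ with $\nu_T\ge 2$ for $g(T)=0$ follows from the no-rational-tails hypothesis. Since $g\ge 1$, a contracted rational component has either $\nu_T\ge 2$, or is a tail, which is excluded. A contracted rational bridge with $\nu_T=2$ needs at least one marking, and a marking of positive weight suffices. So the weighted condition and the ordinary condition (at least one marking) agree on no-rational-tails curves.
\end{itemize}
Conversely, given a stable map with no rational tails, $m$ heavy markings and $k$ further distinct markings, and a partition of $n$ labels into $k$ blocks of sizes at most $M_\ve$ assigned to those markings, we recover a unique weighted stable map. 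This is a bijection on points. It also holds in families, giving an isomorphism of each stratum (fixed partition type) with the corresponding unordered-block moduli, because both moduli problems have the same underlying curve and map and the markings only move within the smooth locus. This should be phrased as a morphism of stacks that is a locally closed stratification, so that Grothendieck group classes add.

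We then match this with the composition. By definition, $(\Delta\Mbar^{nrt}\circ_2\calT_{M_\ve})(m,n,d)$ is the disjoint union over $k$ of
\[\bigl(\Mbar^{nrt}_{g,m+k}(\Gr(r,N),d)\times\calT_{M_\ve}^{\boxtimes k}(n)\bigr)/\bbS_k.\]
Here $\calT_{M_\ve}^{\boxtimes k}(n)$ is the $\bbS_n$-set of ordered $k$-tuples of nonempty blocks of sizes at most $M_\ve$ partitioning $[n]$, namely $\coprod\Ind\,\mathrm{pt}$. Since $\bbS_k$ acts freely on ordered blocks, the quotient identifies with pairs consisting of a map with $m+k$ markings and an unordered labelled partition attached to the last $k$ markings. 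This is exactly the stratification above, and it is $\bbS_m\times\bbS_n$-equivariant. Note that the degree grading of $\calT$ is $0$, so $d$ is untouched.

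For the $\ve\to0$ statement, take $\ve<1/n$ in bidegree $(m,n,d)$, so $M_\ve\ge n$ and the truncation is invisible. Then $\calT_{M_\ve}$ and $\calT_\infty$ agree in all relevant degrees.

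The main obstacle is making the bijection scheme-theoretic, i.e. showing that each partition stratum is isomorphic (not just bijective) to the free quotient. For this step I would use the universal curve: the locus where the prescribed light sections coincide is closed in the universal family, and forgetting duplicate sections is an isomorphism onto the diagonal locus. If necessary, it suffices to show that the stratification map is a bijective morphism that is a piecewise isomorphism, which is enough in $K_0$ of varieties.
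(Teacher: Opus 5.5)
Your proposal is correct and is essentially the paper's argument. The paper stratifies $\Mbar^{nrt}_{g,m|\ve^n}$ by the number $j$ of distinct light points and identifies each stratum with $\bigl(\Res^{\bbS_{m+j}}_{\bbS_m\times\bbS_j}\Mbar^{nrt}_{g,m+j}\times\coprod\Ind^{\bbS_n}_{\bbS_{n_1}\times\cdots\times\bbS_{n_j}}\Spec\C\bigr)/\bbS_j$, which is exactly your quotient by the free $\bbS_k$-action on ordered partitions into blocks of size at most $M_\ve$. Your explicit check that weighted and ordinary stability agree on curves without rational tails, and the family-level identification via the universal curve, are details the paper leaves implicit.
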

\begin{proof}
    The proof is a basic generalization of the proof of \cite[Theorem 3.2]{GetzlerPandharipande}: if we let
    \[ \Mbar_{g, n|\ve^k}^{nrt, j}(\Gr(r, N), d) \subset \Mbar_{g, n|\ve^k}^{nrt}(\Gr(r, N), d) \]
    be the locus where there exactly $j$ distinct marked points among the $k$ points of weight $\ve$, we have a stratification
    \[\Mbar_{g, n|\ve^k}^{nrt, j}(\Gr(r, N), d) = \frac{\Res^{\bbS_{n + j}}_{\bbS_{n} \times \bbS_j} \Mbar_{g, n+ j}^{nrt}(\Gr(r, N), d) \times\coprod_{\substack{k_1 + \cdots + k_j = k\\0 < k_i \leq M_{\ve} \,\forall\,i}}  \Ind_{\bbS_{k_1} \times \cdots \times \bbS_{k_j}}^{\bbS_k} \Spec(\C)}{\bbS_j} \]
    since \[ \coprod_{\substack{k_1 + \cdots + k_j = k\\0 < k_i \leq M_{\ve}}} \Ind_{\bbS_{k_1} \times \cdots \times \bbS_{k_j}}^{\bbS_k} \Spec(\C)\] is the set of ordered partitions of a $\{1,\ldots,k\}$ with $j$ parts, such that each part has size at most $M_{\ve}$. Upon summing over $j$, the lemma follows from the definition of $\Delta$ and of the composition $\circ_2$.
\end{proof}
The following corollary is obtained by applying the Serre characteristic to Lemma \ref{lem:Grothendieck_ring_collisions}. 
\begin{cor}\label{cor:serre_char_collisions}
    For any $\ve > 0$, we have
    \[ \overline{\cat{M}}_{g,r, N}^{nrt, \ve} = \Delta \overline{\cat{M}}_{g,r, N}^{nrt} \circ_2 \Exp_{\leq M_{\ve}}(p_1). \]
    In the $\ve \to 0^+$ limit, we have
     \[ \overline{\cat{M}}_{g,r, N}^{nrt, 0} = \Delta \overline{\cat{M}}_{g,r, N}^{nrt} \circ_2 \Exp(p_1). \]
\end{cor}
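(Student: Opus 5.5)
The plan is to apply the ring homomorphism $\cat{e}^{\bbS \times \bbS}$ to both identities of Lemma \ref{lem:Grothendieck_ring_collisions} and then evaluate each side using the compatibilities set up in \S\ref{sec:symmetric_functions}. The order of steps is as follows.

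\emph{Left-hand side.} By Definition \ref{defn:catMbar}, the left side becomes $\cat{e}^{\bbS\times\bbS}(\Mbar^{\ve,nrt}_{g,r,N})$. This is the generating function $\overline{\cat{M}}^{nrt,\ve}_{g,r,N}$, with the weight-one markings tracked by the first set of variables and the weight-$\ve$ markings by the second.

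\emph{Right-hand side.} We use that composition $\circ_2$ of a graded $\bbS\times\bbS$-space with a graded $\bbS$-space descends to bisymmetric plethysm: $\cat{e}^{\bbS\times\bbS}(\calA\circ_2\calB)=\cat{e}^{\bbS\times\bbS}(\calA)\circ_2\cat{e}^{\bbS}(\calB)$. We also use that the operator $\Delta$ on $\bbS$-spaces, given by restriction from $\bbS_{m+n}$ to $\bbS_m\times\bbS_n$, specializes to the coproduct of Definition \ref{defn:Deltas}. Together these give
\[
\cat{e}^{\bbS\times\bbS}(\Delta\Mbar^{nrt}_{g,r,N}) = \Delta\overline{\cat{M}}^{nrt}_{g,r,N}.
\]
Finally, $\calT_j$ is the disjoint union of the points $\sigma_1,\ldots,\sigma_j$ with trivial action. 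Its characteristic is therefore $\sum_{k\le j} h_k = \Exp_{\le j}(p_1)$, because $h_k\circ p_1 = h_k$. Taking $j = M_\ve$ gives the first formula, and taking $\calT_\infty$ with $\Exp(p_1)$ gives the $\ve\to0^+$ formula.

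\emph{Main obstacle: well-definedness of the plethysms.} The plethysm with $\Exp(p_1)$ in the $\ve\to0^+$ case involves infinitely many terms. However, $\Exp(p_1)$ has no bidegree-$(0,0)$ term, so within each fixed multidegree only finitely many $k$ contribute, and both sides agree degree by degree. I would also check that the quotient by $\bbS_k$ in the definition of $\circ_2$ is compatible with Serre characteristics. This holds because the Serre characteristic of a quotient by a finite group is the invariant part of the equivariant characteristic, which is exactly what \cite[\S 5]{GetzlerPandharipande} uses to prove compatibility with plethysm. The same argument applies verbatim in the bisymmetric setting, since $\circ_2$ only acts on the second set of variables.
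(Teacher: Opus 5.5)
Your proposal is correct and follows the paper's argument: the paper obtains this corollary simply by applying the Serre characteristic to Lemma \ref{lem:Grothendieck_ring_collisions}. It then identifies $\cat{e}^{\bbS}(\calT_{M_\ve}) = \Exp_{\leq M_\ve}(p_1)$ and $\cat{e}^{\bbS}(\calT_\infty) = \Exp(p_1)$, and your remarks on the degree-wise finiteness of the $\Exp(p_1)$ plethysm and on finite-group quotients only spell out details the paper leaves implicit.
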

\subsection{Symmetric powers}
We recall the following description of the Serre characteristic of symmetric power in terms of plethysm. This will be useful in when we work with relative powers of curves in the next section.
\begin{lem}\label{lem:symmetric_powers}
    For any variety $X$, we have \( h_k \circ \cat{e}(X) = \cat{e}(\Sym^k(X)). \)
\end{lem}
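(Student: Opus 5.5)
Since $h_k \circ (\cdot)$ is a $\QQ$-linear algebra-compatible operation in the coefficient ring, and both sides are additive over stratifications, I will first reduce to the case of a single class: write $\cat{e}(X) = \sum_i (-1)^i [H^i_c(X;\QQ)]$ in $K_0(\cat{MHS})$. By Definition \ref{defn:plethysmgraded}, for $f = h_k$ and $g = \cat{e}(X)$ (a degree-$(0,0)$ element, allowed by Remark \ref{remark:plethysm} since $h_k$ is homogeneous of degree $k$), we get
\[ h_k \circ \cat{e}(X) = \frac{1}{k!}\sum_{\sigma\in\bbS_k}\prod_{i>0}\psi_i(\cat{e}(X))^{\lambda_i(\sigma)}, \]
i.e. $h_k\circ[M]$ is computed by the Adams operations of Definition \ref{def:Adams}. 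That recursion is Newton's identity relating power sums and complete homogeneous symmetric functions, so it exhibits $h_k\circ[M]$ as exactly the $k$th $\lambda$-ring symmetric power $\sigma^k[M]$ in the $\lambda$-ring $K_0(\cat{MHS})$. For an honest (graded) mixed Hodge structure $M = M^{\mathrm{even}}\oplus M^{\mathrm{odd}}$, with the Koszul sign convention, this symmetric power is $\sum_{a+b=k}[\Sym^a M^{\mathrm{even}}]\cdot[\wedge^b M^{\mathrm{odd}}]$ up to the sign $(-1)^b$, matching the virtual class.

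Next I will show $\cat{e}(\Sym^k X) = \sigma^k(\cat{e}(X))$. Write $\Sym^k X = X^k/\bbS_k$; over $\QQ$, $H^\bullet_c(X^k/\bbS_k) = H^\bullet_c(X^k)^{\bbS_k}$ as mixed Hodge structures (transfer for finite quotients). By the Künneth isomorphism, compatible with mixed Hodge structures, $H^\bullet_c(X^k)\cong H^\bullet_c(X)^{\otimes k}$, where $\bbS_k$ acts by permuting factors with the Koszul sign. Taking invariants of a tensor power of a super vector space gives the super-symmetric power, i.e. $\Sym^a$ of the even part tensored with $\wedge^b$ of the odd part. Taking the alternating sum over degrees then gives precisely $\sigma^k(\cat{e}(X))$. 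Equivalently, phrased in the paper's language, $\cat{e}^{\bbS_k}(X^k) = p_1^k$-type characters twisted: one checks $\cat{e}^{\bbS_k}(X^k) = h_k$-independent formula $\sum_\sigma \prod_i\psi_i(\cat{e}(X))^{\lambda_i(\sigma)}p^{\lambda(\sigma)}/k!$ (Macdonald's formula for traces of permutations on tensor powers: $\mathrm{Tr}(\sigma | V^{\otimes k}) = \prod_i \mathrm{Tr}(\text{$i$-cycle})$, with an $i$-cycle contributing $\psi_i$), and then invariants are obtained by $p_i\mapsto 1$, as in the invariants subsection.

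The main obstacle is the sign bookkeeping: verifying that the Adams operations defined recursively via $\Sym^n$ in Definition \ref{def:Adams}, extended additively to virtual classes, agree with the super-trace of an $i$-cycle on $H_c^\bullet(X)^{\otimes i}$ including odd classes. I would handle this by checking it on a single pure Hodge structure $M$ placed in even versus odd degree: for even degree the trace of an $i$-cycle on $M^{\otimes i}$ is $\psi_i[M]$ by the standard identity, and for odd degree it is $(-1)^{i-1}\psi_i[M]\cdot$(sign), which equals $\psi_i(-[M])$ since the $\lambda$-ring Adams operations are additive and $\psi_i(-[M])=-\psi_i[M]$; the sign of the $i$-cycle, $(-1)^{i-1}$, combined with the Koszul sign produces exactly this. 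Additivity of both sides in $[M]$ (plethysm by $h_k$ satisfies $h_k\circ(a+b)=\sum h_j\circ a\, h_{k-j}\circ b$, mirrored by the Künneth decomposition) then finishes the argument.
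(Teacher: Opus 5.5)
Your argument is correct, but it takes a different route from the paper. The paper's proof is a one-liner inside the $\bbS$-space formalism. It regards $X$ as a graded $\bbS$-space concentrated in bidegree $(0,0)$, so the composition formula (\ref{eqn:comp}) gives $(\sigma_k\circ X)(0,0)=X^k/\bbS_k=\Sym^k(X)$. It then applies $\cat{e}^{\bbS}$, using $\cat{e}^{\bbS}(\sigma_k)=h_k$ and the Getzler--Pandharipande compatibility $\cat{e}^{\bbS}(\calX\circ\calY)=\cat{e}^{\bbS}(\calX)\circ\cat{e}^{\bbS}(\calY)$.

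You instead prove this special case of the compatibility by hand. You expand $h_k$ in power sums and identify $h_k\circ(-)$ with the $\lambda$-ring operation $\sigma^k$ through the Newton recursion defining $\psi_n$. You compute $H^\bullet_c(\Sym^k X)$ as the $\bbS_k$-invariants of $H^\bullet_c(X)^{\otimes k}$ with Koszul signs. You then match the resulting supersymmetric power with $\sigma^k$ of the virtual class via $\sigma^b(-[M])=(-1)^b[\wedge^b M]$. This is Macdonald's formula for symmetric products lifted to mixed Hodge structures, and it is essentially the computation underlying the Getzler--Pandharipande result.

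Your version is self-contained and shows why plethysm must be taken in the $\lambda$-ring sense: odd cohomology contributes exterior rather than symmetric powers. The paper's version is shorter and keeps the lemma uniform with the many other places where compatibility of $\cat{e}^{\bbS}$ with composition is used. Your ``equivalently'' route would by itself suffice and avoids the detour through $\sigma^k$. There you write the $K_0(\cat{MHS})$-valued supercharacter of $H^\bullet_c(X)^{\otimes k}$ at $\sigma$ as $\prod_i\psi_i(\cat{e}(X))^{\lambda_i(\sigma)}$ and take invariants by $p_i\mapsto 1$.

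Two small corrections, neither of which affects the argument. First, your opening claim that $h_k\circ(\cdot)$ is $\QQ$-linear and that both sides are additive over stratifications is false as stated. The correct property is the exponential one, $h_k\circ(a+b)=\sum_j (h_j\circ a)(h_{k-j}\circ b)$, mirrored by $\Sym^k(U\sqcup Z)=\bigsqcup_j \Sym^j U\times\Sym^{k-j}Z$; this is what you actually use at the end. Second, in the sign check, the Koszul sign of an $i$-cycle acting on odd classes already is the sign $(-1)^{i-1}$ of the cycle. The additional factor $(-1)^i$ comes from the parity of $M^{\otimes i}$ in the alternating sum. Their product gives $-\psi_i[M]=\psi_i(-[M])$, as you conclude.
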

\begin{proof}
   Considering $X$ as a graded $\bbS$-space concentrated in degree $(0, 0)$, unpacking the definition of the composition operation (\ref{eqn:comp}) leads to \((\sigma_k \circ X)(0,0)] = [\Sym^k(X)] \in K_0(\cat{Var}). \)
  The lemma follows by taking $\cat{e}(\cdot)$ on both sides.
\end{proof}
For later use, we compute the Serre characteristics of symmetric powers of $\P^{r - 1}$. While the formula should be standard, we include a proof for completeness.
\begin{lem}\label{lem:sym_of_Pr}
    We have
    \[ \sum_{k = 0}^{\infty} \cat{e}(\Sym^{k}(\P^{r-1}))q^k = \prod_{i = 0}^{r - 1}\frac{1}{1 - \bbL^{i}q}. \]
\end{lem}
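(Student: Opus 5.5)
The plan is to reduce the identity to a computation with Adams operations, via the plethystic exponential. The generating function $\sum_{k\ge 0} h_k\, t^k$ equals $\Exp(p_1 t)$, that is,
\[
\sum_{k\ge0} h_k = \exp\Big(\sum_{j\ge1}\frac{p_j}{j}\Big).
\]
By Lemma \ref{lem:symmetric_powers}, we have $\cat{e}(\Sym^k(\P^{r-1}))q^k = h_k\circ(\cat{e}(\P^{r-1})q)$. Summing over $k$, and using that $f\mapsto f\circ g$ is a $\QQ$-algebra homomorphism with $p_j\circ g=\psi_j(g)$, the left-hand side becomes
\[
\exp\Big(\sum_{j\ge1}\frac{\psi_j(\cat{e}(\P^{r-1})q)}{j}\Big).
\]
The composition makes sense by Remark \ref{remark:plethysm}, since $q$ has positive degree.

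Next I would compute $\psi_j$ on the relevant classes. Since $\cat{e}(\P^{r-1})=\sum_{i=0}^{r-1}\bbL^i$, additivity gives $\psi_j(\cat{e}(\P^{r-1})q)=\sum_i \psi_j(\bbL^i)q^j$. The key input is $\psi_j(\bbL^i)=\bbL^{ij}$. To see this, note that $\bbL^i$ is the class of a one-dimensional pure Hodge structure, so $\Sym^n(\bbL^i)=\bbL^{in}$ (it is even weight, so there are no sign issues in the symmetric power). The recursion in Definition \ref{def:Adams} then reads
\[
n\bbL^{in}=\psi_n(\bbL^i)+\sum_{m=1}^{n-1}\psi_m(\bbL^i)\,\bbL^{i(n-m)}.
\]
By induction, $\psi_m(\bbL^i)=\bbL^{im}$ for $m<n$, so the sum contributes $(n-1)\bbL^{in}$, and hence $\psi_n(\bbL^i)=\bbL^{in}$. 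Since $\psi_j$ is a ring homomorphism, this alternatively only needs checking for $\bbL$ itself.

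Finally, I would resum the exponent:
\[
\exp\Big(\sum_{i=0}^{r-1}\sum_{j\ge1}\frac{(\bbL^iq)^j}{j}\Big)=\prod_{i=0}^{r-1}\exp(-\log(1-\bbL^iq))=\prod_{i=0}^{r-1}\frac{1}{1-\bbL^iq},
\]
which is an identity of formal power series in $q$ with coefficients in $K_0(\cat{MHS})\otimes\QQ$. The main point needing care is the verification that $\psi_j(\bbL^i)=\bbL^{ij}$, which rests on the correct identification of $\Sym^n$ of a rank-one Hodge structure of even weight; the rest is formal.

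As an alternative check, one can avoid Adams operations entirely. Since $\P^{r-1}$ admits a cell decomposition $\bigsqcup_i \A^i$, the motivic zeta function is multiplicative under such decompositions, and $\sum_k \cat{e}(\Sym^k\A^i)q^k=(1-\bbL^iq)^{-1}$ because $\cat{e}(\Sym^k\A^i)=\bbL^{ik}$. I would mention this only as a sanity check.
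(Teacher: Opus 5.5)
Your proof is correct, but it does the splitting in a different place from the paper.

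\textbf{The paper's route.} The paper splits geometrically first. It invokes the multiplicativity of the motivic zeta function under the cell stratification $\P^{r-1}=\bigsqcup_i \A^i$, which it quotes as well known, to get $Z_{\P^{r-1}}(q)=\prod_i Z_{\A^i}(q)$. Only then does it apply Lemma \ref{lem:symmetric_powers} to each cell, asserting $h_k\circ\bbL^i=\bbL^{ik}$ as a consequence of ``the definition of plethysm.''

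\textbf{Your route.} You apply Lemma \ref{lem:symmetric_powers} once, to $\P^{r-1}$ itself. The identity $\sum_{k\ge 0}h_k=\exp(\sum_j p_j/j)$ then converts the additive decomposition $\cat{e}(\P^{r-1})=\sum_i\bbL^i$ into the product. So multiplicativity of the zeta function in $K_0(\cat{Var})$ is replaced by the formal fact that the plethystic exponential turns sums into products in $K_0(\cat{MHS})\otimes\Lambda[\![q]\!]$.

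\textbf{Comparison.} Your route is more self-contained. It needs no external input on zeta functions. It also checks explicitly, via the recursion in Definition \ref{def:Adams}, that $\psi_j(\bbL^i)=\bbL^{ij}$, which is exactly what the paper leaves implicit in $h_k\circ\bbL^i=\bbL^{ik}$. Absorbing $q$ into the argument, $\cat{e}(\Sym^k(\P^{r-1}))q^k=h_k\circ(\cat{e}(\P^{r-1})q)$, is legitimate because $h_k$ is homogeneous of degree $k$ and $\psi_i(q)=q^i$. The paper's route is more geometric: it shows the factorization as a shadow of the cell decomposition. Your closing ``sanity check'' is in fact precisely the paper's proof.

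\textbf{Two cosmetic points.} First, the paper's $\Exp$ omits the $k=0$ term, so in its notation $\sum_{k\ge0}h_k=1+\Exp(p_1)$. Second, you do not need Remark \ref{remark:plethysm}. The element $\cat{e}(\P^{r-1})q$ already lies in the positive-degree part, where plethysm is defined directly and is $q$-adically continuous. The remark is what justifies Lemma \ref{lem:symmetric_powers} itself, whose argument $\cat{e}(X)$ has bidegree $(0,0)$.
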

\begin{proof}
For a variety $X,$ we specialize the motivic zeta function to Serre characteristics by defining 
\[ Z_{X}(q):= \sum_{k = 0}^{\infty} \cat{e}(\Sym^{k}(X))q^k.\]
It is well-known that $Z_{(\-)}(q)$ is multiplicative with respect to locally closed stratifications, so \[ Z_{\P^{r - 1}}(q) = \prod_{i = 0}^{r - 1} Z_{\A^i}(q). \]
By Lemma \ref{lem:symmetric_powers} and the definition of plethysm, \(\cat{e}(\Sym^{k}(\bbA^i)) = h_k \circ \cat{e}(\bbA^i) = h_k \circ \bbL^i = \bbL^{ik},  \) for any $k \geq 0$. The proof is complete by
\[Z_{\P^{r - 1}}(q) = \prod_{i = 0}^{r - 1} Z_{\A^i}(q) = \prod_{i = 0}^{r - 1}  \left(\sum_{k = 0}^{\infty} \bbL^{ik}q^k\right)  =  \prod_{i = 0}^{r - 1} \frac{1}{1 - \bbL^iq}.\]
\end{proof}

\begin{rem}
    It is simpler to calculate the Serre characteristics $\cat{e}(\Sym^k(\bbA^i))$ than the actual motivic zeta function of $\bbA^i$ (\cite[Lemma 4.4]{Gottsche90}, see also \cite[Proposition 7.32]{Mustata}). 
    
\end{rem}

For an indeterminate $t$ and integers $N, M$, recall the notation 
\[ \qbinomopt[t]{N}{M} = \frac{(1 - t^N)(1 - t^{N-1}) \cdots (1 - t^{N - M + 1})}{(1 - t^M) (1  - t^{M - 1}) \cdots (1- t)}  \]
for the evaluation of the $q$-binomial coefficient (\ref{eqn:q_binom_defn}) at $t$. 
\begin{cor}\label{cor:sym_L_binomial}
    For any $r \geq 1$ and $k \geq 0$, we have
    \[ \cat{e}(\Sym^{k}(\P^{r-1})) =\qbinomopt[\bbL]{r + k - 1}{k} = \cat{e}(\Gr(k, r+ k - 1)) \]
\end{cor}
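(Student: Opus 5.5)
The plan is to extract the coefficient of $q^k$ from Lemma \ref{lem:sym_of_Pr} and recognize it as a Gaussian binomial evaluated at $\bbL$. By Lemma \ref{lem:sym_of_Pr}, $\cat{e}(\Sym^k(\P^{r-1}))$ is the coefficient of $q^k$ in $\prod_{i=0}^{r-1}(1-\bbL^i q)^{-1}$. This is the classical generating function for Gaussian binomial coefficients, so the first equality reduces to the identity of formal power series in two variables $t, q$:
\[ \prod_{i=0}^{r-1}\frac{1}{1-t^iq} = \sum_{k\ge 0} \qbinomopt[t]{r+k-1}{k} q^k, \]
specialized at $t=\bbL$. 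The specialization is legitimate because both sides have coefficients in $\Z[t]$: $\qbinomopt[t]{r+k-1}{k}$ is a polynomial in $t$, and expanding the left side produces only polynomials in $t$. Hence the substitution $t \mapsto \bbL$ is a ring map $\Z[t][\![q]\!] \to K_0(\cat{MHS})[\![q]\!]$.

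I would prove the two-variable identity by induction on $r$. For $r=1$ both sides equal $\sum_k q^k$. For the inductive step, set $F_r(q)=\prod_{i=0}^{r-1}(1-t^iq)^{-1}$. Then $F_{r+1}(q)=F_r(q)/(1-t^rq)$, and also $F_{r+1}(q)=F_r(tq)/(1-q)$. Comparing the two expressions gives a recursion on the coefficients $c_{r,k}$. Alternatively, I can use the $q$-Pascal rule
\[ \qbinomopt[t]{r+k}{k} = \qbinomopt[t]{r+k-1}{k} + t^{r}\qbinomopt[t]{r+k-1}{k-1}, \]
which corresponds exactly to the relation $(1-t^r q)F_{r+1}=F_r$. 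Equivalently, the coefficient of $q^k$ in $F_r$ counts partitions with at most $k$ parts, each of size at most $r-1$, weighted by $t^{|\lambda|}$, and this count is the standard interpretation of $\qbinomopt[t]{r+k-1}{k}$. The only step needing care is matching the indices in the $q$-Pascal recursion.

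For the second equality, I would use the formula $\cat{e}(\Gr(k,n))=\qbinomopt[\bbL]{n}{k}$ quoted in the introduction with $n=r+k-1$. This formula follows from the Schubert cell decomposition of $\Gr(k,n)$ into affine cells, whose dimensions are counted by the same partitions in a $k\times(n-k)$ box. Since $n-k=r-1$, the box is $k\times(r-1)$, the same one that appears above, which yields the claim.
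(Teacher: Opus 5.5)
Your proposal is correct and follows the same route as the paper: extract the $q^k$-coefficient of Lemma \ref{lem:sym_of_Pr} using the classical Gaussian-binomial generating function identity, then identify $\qbinomopt[\bbL]{r+k-1}{k}$ with $\cat{e}(\Gr(k,r+k-1))$ by counting Schubert cells, which are indexed by partitions in a $k\times(r-1)$ box. The paper simply cites the identity, whereas you also prove it via the $q$-Pascal recursion $(1-t^rq)F_{r+1}=F_r$ (your indices match) and justify specializing $t\mapsto\bbL$ coefficientwise; both of these additions are sound.
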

\begin{proof}
The first equality follows from the well-known generating function identity
\[ \prod_{i = 0}^{r - 1} \frac{1}{1- \bbL^{i}q} = \sum_{k = 0}^{\infty}\qbinomopt[\bbL]{r + k - 1}{k} q^k, \]
see e.g. \cite[p. 68]{EC1}, while the second one is the well-known formula for the Serre characteristic of the Grassmannian, obtained by counting Schubert cells.
\end{proof}

\section{Motives of Quot schemes of relative curves}\label{sec:relative_quot}
The goal of this section is to prove Corollary \ref{cor:nrtqvsm} which gives a comparison formula between the Grothendieck ring classes of $\ve$-stable quasimaps and stable maps to Grassmannian. We begin by computing the class of relative Quot scheme of a family of curves (Proposition \ref{prop:relquot}), which may be of independent interest. We apply this to the moduli spaces of $\ve$-stable quasimaps after stratifying them in terms of such relative Quot schemes.

Let $C\to S$ be a flat family of curves, and let $\calE$ be a vector bundle on $C$.
\begin{defn}
     The relative zero-dimensional degree-$k$ Quot scheme of $\calE,$ denoted as \[\Quot^k_{C/S}(\calE)\to S,\] represents the functor $\mathsf{Sch}_S^{\mathrm{op}}\to \mathsf{Set}$ given by sending a morphism $f: T\to S$ to the set of quotient sheaves \[\calE_T\to \mathcal{Q}\to 0\] on the fiber product $C_T:=C\times_S T$ such that $\calQ$ is flat over $T,$ and for each geometric point $t\in T,$ the restriction to the fiber  $\calE_{C_t}\to \mathcal{Q}|_{C_t}\to 0$ is a zero-dimensional degree-$k$ quotient on the curve $C_t.$
\end{defn}

Concretely, the fiber of $\Quot^k_{C/S}(\calE)\to S$ over $s\in S$ is the zero-dimensional degree-$k$ Quot scheme of the vector bundle $\calE_s$ on $C_s.$ We will also need an $\ve$-weighted version of the relative Quot scheme.

\begin{defn}\label{defn:quotve}
     For $\ve > 0$, denote by 
     \[\Quot^{k, \ve}_{C/S}(\calE) \subset \Quot^k_{C/S}(\calE)\]
     the subscheme which represents the subfunctor sending $f: T \to S$ to the set of quotient sheaves
     \[ \calE_T \to \calQ \to 0 \]
     as above, such that for any point $t \in T$, and $p \in C_t$, the length $\ell(p)$ of the torsion sheaf $\calQ|_{C_t}$ at $p$ satisfies $\ell(p) \leq 1/\ve$.
\end{defn}

When $\ve$ is sufficiently small, we have $\Quot^{k}_{C/S}(\calE) = \Quot^{k, \ve}_{C/S}(\calE)$.

When $S = \mathrm{Spec}(\C),$ and $C$ is smooth, the Grothendieck ring class of $\Quot^k_C(\calE)$ has been calculated in previous works. Abusing notation, we will use the notation $\bbL$ for the class of the affine line in the Grothendieck ring $K_0(\cat{Var})$ of varieties over $\C$; it should be clear from the context whether $\bbL$ is meant as an element of $K_0(\cat{Var})$ or as an element of $K_0(\cat{MHS})$. 

\begin{thm} \label{thm:quotmotive}Let $C\to \mathrm{Spec}(\C)$ be a smooth curve and let $\calE$ be a rank-$r$ vector bundle on $C$.
    \begin{enumerate}
        \item \cite[Theorem 4.1]{BFP20}, \cite[Corollary 2.4]{Ric20}, \cite[Proposition 4.1]{MonavariRicolfi} In $K_0(\mathsf{Var}, \C),$ we have \[[\Quot^k_{C}(\calE)] = [\Quot^k_{C}(\mathcal{O}_C^{r})].\]
        \item \cite{Bif89}, \cite[Proposition 4.5]{BFP20} The Grothendieck ring class of $[\Quot^k_{C}(\calE)] = [\Quot^k_{C}(\mathcal{O}_C^{r})]$ is given by \[\sum_{\substack{k_1 + \cdots +k_r = k\\ k_i\geq 0}} \prod_{i = 1}^{r} \bbL^{(i - 1)k_i} [\Sym^{k_i}(C)]. \]
    \end{enumerate}
\end{thm}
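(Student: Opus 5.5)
The statement is Theorem \ref{thm:quotmotive}, which collects known results: part (1) is independence of the class from the vector bundle, and part (2) is the explicit formula for the trivial bundle. I would prove it in two steps: a torus-fixed Bialynicki-Birula decomposition for (2), and a local trivialization argument for (1).

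For (2), take $\calE=\calO_C^r=\bigoplus_{i=1}^r L_i$ with each $L_i\cong\calO_C$. Let the torus $T=(\C^\star)^r$ act by scaling the summands with generic weights. A $T$-fixed quotient is a direct sum of quotients $\calO_C\to\calO_{Z_i}$. Hence the fixed locus is
\[ \coprod_{k_1+\cdots+k_r=k}\prod_i \Sym^{k_i}(C), \]
since $\Quot^{k_i}_C(\calO_C)=\Sym^{k_i}(C)$ for a smooth curve.

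Although $C$ need not be proper, I would use a one-parameter subgroup with strictly increasing weights. The limits still exist because the Quot scheme is proper over $\Sym^k(C)$ via the support map, and the torus acts fiberwise over $\Sym^k(C)$. The attracting sets then give a locally closed decomposition into affine fibrations over the fixed components. These fibrations are Zariski-locally trivial, as follows from Bialynicki-Birula for smooth varieties, since $\Quot^k_C(\calO^r)$ is smooth of dimension $rk$.

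The fiber rank is the dimension of the positive-weight part of the tangent space $\mathrm{Hom}(K,Q)$ at a fixed point, where $K$ is the kernel and $Q$ the quotient. The weight-$(a,b)$ piece is $\mathrm{Hom}(K_a,\calO_{Z_b})$, which has dimension $k_b$ because $K_a$ is a line bundle. With the ordering chosen suitably, the positive part is $\sum_{b}\sum_{a<b}k_b=\sum_i (i-1)k_i$. This gives the claimed factor $\bbL^{(i-1)k_i}$.

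For (1), I would stratify by support. The support morphism $\Quot^k_C(\calE)\to\Sym^k(C)$ decomposes over the strata of $\Sym^k(C)$ indexed by partitions of $k$. Over a point $\sum m_jx_j$, the fiber is the product of the punctual Quot schemes $\Quot^{m_j}_{x_j}(\calE)$. Each of these depends only on the formal neighborhood of $x_j$, where $\calE$ is trivial. So it is isomorphic to the punctual Quot scheme of $\calO^r$ at a point of $\bbA^1$.

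The main obstacle is making this fiber identification work in families, so that it yields an equality of classes rather than just a pointwise statement. I would work over the configuration-space stratum of ordered distinct points, which is an open subset of $C^s$, and then take the quotient by the finite stabilizer group. On this stratum I would choose an open cover of $C$ on which $\calE$ is trivial. The restricted Quot scheme is then the same for $\calE$ and for $\calO^r$ over the open subset of $C^s$ where all points lie in a common trivializing open. These opens form a Zariski cover of the stratum.

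Classes glue along such covers by inclusion–exclusion, provided the isomorphisms are compatible on overlaps. They are, because both sides are identified with the same universal construction: the punctual Quot scheme bundle associated to the frame bundle of the jets of $\calE$. This bundle is a fibration with structure group $\GL_r(\C[t]/t^m)$, which is special, so its class equals that of the trivial fibration. Summing over strata gives $[\Quot^k_C(\calE)]=[\Quot^k_C(\calO^r)]$.
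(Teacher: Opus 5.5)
Your proof of (2) is correct, but it follows Bifet's torus-localization argument rather than the route the paper takes for its relative analogue, Proposition~\ref{prop:relquot}. There the kernel is intersected with the standard flag $\mathcal{F}_1\subset\cdots\subset\mathcal{F}_r=\calO^r$, and each stratum is shown directly to be an iterated vector bundle, with fibres $H^0(\calT_{i-1})$, over $\prod_i\Sym^{\ell_i}$. For a suitable ordering of the weights these flag strata coincide with your attracting cells, since the limit of a kernel is its associated graded for the flag. The flag description, however, needs no smoothness, no tangent-weight computation, and no Bia\l{}ynicki-Birula theorem on a non-proper space. That is what lets the paper run it in families over singular bases and Artin stacks, and after imposing the $\ve$-condition.

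Part (1) has a gap as written. The loci where all $s$ points lie in one member of an \emph{arbitrary} trivializing cover of $C$ do not cover the stratum. For example, take the two standard charts of $\P^1$ and $\calE=\calO(1)\oplus\calO$: the pair $\{0,\infty\}$ lies in no single chart. You need one of two fixes.
\begin{itemize}
\item Choose a cover in which every set of at most $k$ points lies in one trivializing open. Such a cover exists because a bundle is trivial near any finite set (projective modules of constant rank over a semilocal ring are free), but you have to say this.
\item Use the device of Lemma~\ref{lem:indep-Quot}. Stratify $C$ into locally closed pieces $W_j$, each with a trivializing neighbourhood $U_j$, and stratify $\Sym^k(C)$ by how many points lie in each $W_j$. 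A quotient whose support is spread over the disjoint $W_j$ splits as a direct sum. So each stratum is a fibre product of the pieces $\Quot_{W_j\subset U_j}$, and each piece is identified with its trivial-bundle counterpart.
\end{itemize}

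Your gluing paragraph is also wrong. Isomorphisms induced by different trivializations are \emph{not} compatible on overlaps: they differ by the transition functions acting on quotients. But no compatibility is needed, because isomorphisms over the pieces of a stratification refining the cover already give equality in $K_0(\cat{Var})$.

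The special-group argument does not rescue the step either, because you then pass to the quotient by the finite stabilizer $G$. Equal classes of the ordered spaces do not give equal classes of the $G$-quotients unless the comparison is $G$-equivariant. Moreover, over the unordered stratum the structure group acquires a symmetric-group factor and is no longer special.

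The cleanest repair is to drop the support-type and ordered strata altogether. Work instead with the opens $\Sym^k(U)\subset\Sym^k(C)$. Over such an open, $\Quot^k_C(\calE)=\Quot^k_U(\calE|_U)\cong\Quot^k_U(\calO_U^r)$, compatibly with the support maps.
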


We prove an analogous formula for the motive of Quot schemes of \emph{relative} curves, in the $\ve$-weighted case. The first step is to show that the motive is independent of the vector bundle $\calE$

\begin{lem}\label{lem:indep-Quot}
    Let $\pi: C\to S$ be a flat family of curves, and let $\calE$ be a rank-$r$ vector bundle on $C.$ Then we have $[\Quot^{k, \ve}_{C/S}(\calE)] = [\Quot^{k, \ve}_{C/S}(\calO_C^r)]$ in $K_0(\mathsf{Var}, \C).$
\end{lem}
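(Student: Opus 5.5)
We follow the strategy of \cite[Proposition 4.1]{MonavariRicolfi} and \cite[Theorem 4.1]{BFP20}: reduce to a local statement near the support of the quotient, where every vector bundle is trivial, and then glue using a stratification by the support. The key point is that a zero-dimensional quotient $\calE_s \to \calQ$ only depends on $\calE_s$ in a neighborhood of $\operatorname{Supp}\calQ$. Also, the length condition $\ell(p)\le 1/\ve$ of Definition \ref{defn:quotve} is local at each point of the support, so every step is compatible with the $\ve$-truncation.

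First we stratify by the shape of the support. Let $\calC\subset C$ be the smooth locus of $\pi$. We note that in our intended applications quotients are supported away from nodes, but in general we stratify $\Quot^{k,\ve}_{C/S}(\calE)$ by the multiset of lengths at the support points. For a partition $\alpha=(\alpha_1,\dots,\alpha_s)$ of $k$ with all $\alpha_i\le 1/\ve$, the stratum $\Quot_\alpha$ maps to the relative configuration space $F_\alpha\subset \Sym^{\alpha}_S$ of distinct points on the fibers with labelled multiplicities, modulo the symmetries of $\alpha$. Both $[\Quot^{k,\ve}_{C/S}(\calE)]$ and $[\Quot^{k,\ve}_{C/S}(\calO^r)]$ decompose as $\sum_\alpha[\Quot_\alpha]$, so it suffices to compare the strata for each $\alpha$.

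Next we trivialize Zariski locally. Since the map $\Quot_\alpha\to F_\alpha$ is compatible with restriction to open sets, we can cover $F_\alpha$ by locally closed pieces $U$ over which there is an open $W\subset C_U$ containing the universal support with $\calE|_W\cong \calO_W^r$. Concretely, over the ordered configuration space we choose, for each point, an open neighborhood on which $\calE$ is trivial; trivializations exist Zariski locally on $C$, and $S$-flatly we can shrink $U$. Over such $U$, the quotients of $\calE$ supported at the prescribed points are identified with quotients of $\calE|_W\cong\calO_W^r$, which in turn are identified with quotients of $\calO^r$. This gives an isomorphism of the restricted strata over $U$. Since $K_0(\cat{Var})$ classes are additive over locally closed decompositions of $U$, summing gives $[\Quot_\alpha(\calE)]=[\Quot_\alpha(\calO^r)]$. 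There is one subtlety: the classes are taken on the unordered configuration space, i.e. after quotienting by $\operatorname{Aut}(\alpha)$. We handle this by choosing the trivializing neighborhoods $\operatorname{Aut}(\alpha)$-equivariantly, or by noting that the quotient is a product of punctual pieces over distinct points and using the isomorphism $\Quot_\alpha \cong (\text{product of punctual Quot fibrations over the ordered space})/\operatorname{Aut}(\alpha)$ fiberwise, with both sides compared equivariantly.

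The main obstacle is this equivariance/gluing step. Zariski-local isomorphisms over pieces of the base are only non-canonical, so we must make sure that the pieces over which the isomorphism is defined descend to the unordered configuration space. We will first try the following. The punctual Quot scheme $\Quot^{m}_{p}(\calE)$ of quotients supported at a single smooth point is a fibration over $\calC$ which is Zariski locally trivial with fiber the punctual Quot scheme of $\calO^r$ at a point of $\bbA^1$, independent of $\calE$, by trivializing $\calE$ and choosing a local coordinate. Then $\Quot_\alpha$ is the restriction to the open locus of distinct points of a $\operatorname{Aut}(\alpha)$-quotient of a fiber product of such fibrations. We then invoke the standard fact, used in \cite{BFP20}, that classes of such symmetric-product-type constructions built from Zariski locally trivial fibrations with the same fiber agree in $K_0(\cat{Var})$ (via a power structure argument). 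If this does not apply directly, we instead first pass to the stratification of Theorem \ref{thm:quotmotive}(2) type and prove independence on each cell.
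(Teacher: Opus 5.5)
There is a genuine gap, and it is at the step you yourself flag as the main obstacle. You stratify by length type $\alpha$ and trivialize $\calE$ near each support point separately over the \emph{ordered} configuration space. The resulting identifications are non-canonical and not $\operatorname{Aut}(\alpha)$-equivariant: trivializations near different points need not glue when the neighborhoods overlap, and permuting the points changes them. Equality of classes of covers in $K_0(\cat{Var})$ does not give equality of classes of their quotients, so you never reach the unordered strata.

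Neither remedy you propose closes this. The ``standard fact'' from power structures is, once unpacked, proved by exactly the decomposition that is missing here. You would also need a relative version over $S$, which you do not supply. The fallback via cells of Theorem~\ref{thm:quotmotive}(2) type needs a global filtration of $\calE$ by subbundles with line-bundle quotients on the family $C\to S$, which need not exist. Even if it did, the strata would be affine bundles rather than the total space of a $\mathscr{H}om$-sheaf, since the proof of Proposition~\ref{prop:relquot} uses the splitting $\calF_r=\calF_{r-1}\oplus\calO$.

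The missing idea is to stratify the \emph{curve}, not the configuration space. After stratifying $S$, the paper writes $C=\bigsqcup_{j=1}^{\ell}W_j$, with each $W_j$ locally closed and contained in an open $U_j$ on which $\calE$ is trivial. It then stratifies $\Sym^k_{C/S}$ by the numbers $(k_1,\ldots,k_\ell)$ of support points, with multiplicity, lying in each $W_j$. A zero-dimensional quotient splits as the direct sum of its parts supported in the different $W_j$. So the corresponding stratum of $\Quot^{k,\ve}_{C/S}(\calE)$ is $\Quot^{k_1,\ve}_{W_1\subset C/S}(\calE)\times_S\cdots\times_S\Quot^{k_\ell,\ve}_{W_\ell\subset C/S}(\calE)$. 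Each factor is identified with the same factor for $\calO^r$ by extension by zero from $U_j$ and the trivialization $\calE|_{U_j}\cong\calO^r_{U_j}$. Assigning each support point to the piece containing it is canonical, so no ordering of points and no symmetric-group quotient ever enters.

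An equally valid repair, closer to your second paragraph, is to work directly on the unordered $\Sym^k_{C/S}$. Use a \emph{single} open $W_0\subset C$ containing the whole finite support on which $\calE$ is trivial. Such $W_0$ exists when, e.g., $C$ is quasi-projective: finite sets lie in an affine open, and projective modules of constant rank over semilocal rings are free. Then $\{D:\operatorname{Supp}D\subset W_0\}$ is open in $\Sym^k_{C/S}$, and the two Quot schemes are isomorphic over it. Finitely many such opens give the required stratification.
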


    \begin{proof}
    The proof is modeled on \cite[Proposition 4.1]{MonavariRicolfi}.

   We observe that the class of $[\mathrm{Quot}^{k, \ve}_{X/S}(\calE)]$ is cut-and-paste with respect to the base $S,$ in the following sense. Let $S = \bigsqcup_{i\in I}S_i$ be a finite locally closed stratification. For each $i$, we have a fiber diagram \[\begin{tikzcd}
	{\mathrm{Quot}^{k, \ve}_{\pi^{-1}(S_i)/S_i}(\calE|_{\pi^{-1}(S_i)})} & {\mathrm{Quot}^{k, \ve}_{X/S}(\calE)} \\
	{S_i} & S
	\arrow[from=1-1, to=1-2]
	\arrow[from=1-1, to=2-1]
	\arrow[from=1-2, to=2-2]
	\arrow[from=2-1, to=2-2]
\end{tikzcd}\]
    
    Therefore, there is an induced locally closed stratification \[\mathrm{Quot}^{k, \ve}_{X/S}(\calE) = \bigsqcup_{i\in I} \mathrm{Quot}^{k, \ve}_{\pi^{-1}(S_i)/S_i}(\calE|_{\pi^{-1}(S_i)}),\] which gives an equality in $K_0(\mathsf{Var}, \C).$ 

    Now, we claim that there exist finite locally closed stratifications \[S = \bigsqcup_{i \in I} S_i \text{ and }\pi^{-1}(S_i) = \bigsqcup_{j \in \calA_i} W_j\] 
    such that $W_j \to S_i$ is flat for all $j \in \calA_i$ and $\calE$ is trivial on each $W_j$ (here $\calA_i$ is some finite set of indices). They can be constructed by choosing an open cover of $X$ which trivializes $\calE$, taking the image open cover of $S$, and suitably refining it to obtain the $S_i$.

    Since we can always find such stratifications, and the class of $\Quot_{X/S}^{k, \ve}(\calE)$ is cut-and-paste with respect to $S$, we can prove the lemma assuming that there is only one $S_i.$ In other words, from now on we assume that
    \[ X = \bigsqcup_{j = 1}^{\ell} W_j \]
    such that $\calE$ is trivial on each $W_j$, and $W_j \to S$ is flat for all $j$.  
    
Consider the morphism \[\Quot^{k, \ve}_{X/S}(\calE)\to \Sym_{X/S}^k,\] which records the support of the zero-dimensional quotient sheaf, and let \[\Quot^{k, \ve}_{W_i \subset X/S}(\calE)\subset \Quot^{k, \ve}_{X/S}(\calE)\] be the pre-image of \(\Sym_{W_i/S}^k \subset \Sym_{X/S}^k.\) We stratify $\Sym_{X/S}^k$ by fiber products \[\bigsqcup_{(k_i\geq 0): \sum_{ i = 1}^\ell k_i = k} \ \Sym_{W_1/S}^{k_1} \times_S \cdots \times_S \Sym_{W_\ell/S}^{k_\ell}, \]
and define
\[ \Quot^{k_1,\ldots,k_\ell;\ve}_{X/S}(\calE) := \pi^{-1}(\Sym_{W_1/S}^{k_1} \times_S \cdots \times_S \Sym_{W_\ell/S}^{k_\ell}). \]
There is an isomorphism
\[\Quot^{k_1,\ldots,k_\ell;\ve}_{X/S}(\calE) \cong \Quot^{k_1,\ve}_{W_1 \subset X/S} (\calE) \times_S \cdots \times_S \Quot^{k_\ell, \ve}_{W_\ell \subset X/S}(\calE). \]
given by direct sums. We now prove the lemma by arguing that for each $i$, we have
\begin{equation}\label{eqn:key_iso} 
\Quot_{W_i \subset X/S}^{k_i, \ve}(\calE) \cong \Quot_{W_i \subset X/S}^{k_i, \ve}(\calO_X^{\oplus r}).
\end{equation}
Indeed, by construction there always exists an open set $U_i \subset X$ containing $W_i$ on which $\calE$ is trivial. Via extension by zero, we have an isomorphism
\[ \Quot_{W_i \subset U_i/S}^{k_i, \ve}(\calE|_{U_i}) \cong \Quot^{k_i, \ve}_{W_i \subset X/S}(\calE) \]
Since $\calE|_{U_i} = \calO^{\oplus r}|_{U_i}$, we obtain the isomorphism (\ref{eqn:key_iso}), and the proof is complete.
\end{proof}
Specializing to a family of smooth curves, we now determine the Grothendieck ring class of the relative Quot scheme in terms of certain relative weighted configuration spaces. 

\begin{defn}
Let
\( C_S^k := \overbrace{(C \times_S \cdots \times_S C)}^{k}\) be the $k$-th fiber power of $C\to S,$
and let $\Delta_d \subset C_S^k$ be the union of all of the closed diagonals defined by the equality of $d$ coordinates. Define
\[ \mathrm{Conf}^k_{\ve}(C/S) := C_S^{k} \smallsetminus \Delta_{\lfloor1/\ve \rfloor + 1}. \]
Over a point $s \in S$, the space $\mathrm{Conf}^k_{\ve}(C/S)$ parameterizes $k$ ordered points in the fiber $C_s$, such that no more than $\lfloor 1/\ve \rfloor$ of the points can coincide.
\end{defn}

The proof of the following formula is reminiscent of T. Mochizuki's work on complete filt schemes \cite{Mochizuki}.

\begin{prop}\label{prop:relquot}
    Suppose $\ve > 0$ and suppose $\overline{C} \to S$ is a flat family of prestable curves, with $n \geq 0$ sections $s_1, \ldots, s_n: S \to \overline{C}$ which land in the smooth locus. Let $C \to S$ be a flat family of smooth curves obtained by deleting the nodes and any subset of the sections from $\overline{C}$. Then \[[\mathrm{Quot}^{k, \ve}_{C/S}(\mathcal{O}_C^r)] = \sum_{\substack{k_1+\cdots + k_r = k\\ k_i\geq 0}}\mathbb{L}^{\sum_{i=1}^{r}(i-1) k_i}\left[\frac{\mathrm{Conf}_{\ve}^k(C/S)}{\bbS_{k_1} \times \bbS_{k_2} \times \cdots \times \bbS_{k_r}}\right]
    \]
    as elements in $K_0(\cat{Var})$.
\end{prop}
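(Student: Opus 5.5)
We construct a stratification of $\Quot^{k,\ve}_{C/S}(\calO_C^r)$ into pieces that are Zariski-locally trivial affine fibrations, in the spirit of Bifet's torus-fixed-point stratification and Mochizuki's complete filt schemes. We do it fiberwise over $S$, but uniformly in $s$. Fix the standard flag $\calO_C \subset \calO_C^2 \subset \cdots \subset \calO_C^r$. Given a quotient $\calO_C^r \to \calQ$ with kernel $\calV$, the filtration $\calV_i := \calV \cap \calO_C^i$ has successive quotients $\calV_i/\calV_{i-1} \subset \calO_C$. Each of these is an ideal sheaf $\calO_C(-D_i)$ for a relative effective divisor $D_i$ of degree $k_i$, and $\sum_i k_i = k$. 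This assigns to every point of the Quot scheme a tuple $(D_1,\ldots,D_r)$, and so defines a locally closed stratification indexed by $(k_1,\ldots,k_r)$. Because the graded pieces of a flat family need not be flat, we will first flatten by refining $S$ further; the invariance of the class under stratification of the base, as in the proof of Lemma \ref{lem:indep-Quot}, makes this harmless. We then get a morphism from the $(k_1,\ldots,k_r)$-stratum to $\Sym^{k_1}_{C/S}\times_S\cdots\times_S\Sym^{k_r}_{C/S}$.

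Next we compute the fibers of this morphism. Over a fixed tuple, the data of $\calV$ amounts to iterated extension data. At stage $i$, we must lift $\calO_C(-D_i)$ into $\calO_C^i/\calV_{i-1}$, and the choices form a torsor under $\mathrm{Hom}(\calO_C(-D_i), \calO_C^{i-1}/\calV_{i-1}) \cong H^0$ of a torsion sheaf of length $\sum_{j<i}k_j$. That would give fiber dimension $\sum_{j<i} k_j$, but we must also account for the requirement that $\calV\cap\calO^{i-1}=\calV_{i-1}$. The correct count, matching Theorem \ref{thm:quotmotive}(2) for $S=\Spec\C$, is $\mathbb{L}^{(i-1)k_i}$. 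We will obtain it by working locally at each point $p$. Only the coordinates $i'<i$ contribute, each through the part of $\calO_C^{i-1}/\calV_{i-1}$ corresponding to $\calO/\calO(-D_{i'})$. Truncating by the multiplicity of $D_i$ at $p$ should leave exactly $\min$-type contributions. The exponent formula will follow by carefully ordering which coordinate "owns" each point. The cleanest way to get exactly $(i-1)k_i$ is probably to pass to ordered points: first stratify by the partition type of $\Sym$, equivalently work on $\mathrm{Conf}^k_\ve(C/S)/\prod\bbS_{k_i}$. Then we check that over each partition-type stratum the fibration is Zariski-locally trivial with fiber $\bbA^{(i-1)k_i}$ at stage $i$. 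This holds because the sections of the torsion sheaves are free modules over the symmetric-function base.

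Finally we impose $\ve$-stability. The length of $\calQ$ at $p$ equals $\sum_i \mathrm{mult}_p(D_i)$, so the condition $\ell(p)\le 1/\ve$ says exactly that the combined configuration of $k$ points, namely the $k_1$ points of $D_1$, then the $k_2$ points of $D_2$, and so on, has no more than $\lfloor 1/\ve\rfloor$ coincident points. The image of the stratum is therefore $\mathrm{Conf}^k_\ve(C/S)/(\bbS_{k_1}\times\cdots\times\bbS_{k_r})$. The affine fibrations multiply classes by $\mathbb{L}^{\sum(i-1)k_i}$, and summing over strata gives the formula.

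The hypothesis that $C$ comes from a prestable family by deleting nodes and sections is used only to guarantee that $C\to S$ is smooth with relative symmetric powers and fiber powers behaving well. The main obstacle is the second step: establishing that the fibers are affine spaces of exactly dimension $(i-1)k_i$, uniformly and Zariski-locally trivially over the relative base, when points of different $D_j$ collide. We expect to handle this by reducing, via étale/Zariski-local coordinates on $C$ near the support, to the local computation on $\bbA^1$ of Bifet and Ricolfi, and then spreading it out using the cut-and-paste invariance.
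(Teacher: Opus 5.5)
There is a genuine error in your second step, which is the step you flag as the main obstacle.

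\textbf{The naive count was correct.} Fix $\calV_{i-1}$ and $D_i$. The subsheaves $\calV_i\subset\calO_C^{i-1}\oplus\calO_C$ with $\calV_i\cap\calO_C^{i-1}=\calV_{i-1}$ and image $\calO_C(-D_i)$ in the last factor are exactly the preimages of graphs $\{(\phi(x),x)\}$ of homomorphisms $\phi\colon\calO_C(-D_i)\to\calO_C^{i-1}/\calV_{i-1}$. The intersection condition you worried about is automatic for a graph, since an element of the graph with vanishing last coordinate is zero. So the fiber is the vector space $\mathrm{Hom}(\calO_C(-D_i),\calT_{i-1})\cong H^0(\calT_{i-1}(D_i))$, of dimension $\sum_{j<i}k_j$. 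Twisting a torsion sheaf by a line bundle does not change its length, so no ``min-type'' truncation ever occurs.

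\textbf{Consequence for the strata.} The stratum where $\deg D_i=k_i$ for all $i$ has class $\mathbb{L}^{\sum_j (r-j)k_j}[\Sym^{k_1}(C/S)\times_S\cdots\times_S\Sym^{k_r}(C/S)]$. Your claim that stage $i$ is an $\mathbb{A}^{(i-1)k_i}$-fibration is therefore false. Take $r=2$, $k=1$, $S=\Spec\C$. Quotients $\calO_C^2\to\C_p$ are given by $[a:b]\in\P^1$. The stratum $(k_1,k_2)=(1,0)$ is $\{a\neq 0\}$, which is $C\times\mathbb{A}^1$, whereas your count predicts $[C]$. The stratum $(0,1)$ is the single point $a=0$ over each $p$, giving $[C]$ where you predict $\mathbb{L}[C]$. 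No local computation will produce the exponent $\sum_i(i-1)k_i$ stratum by stratum for this filtration. That exponent, in the statement and in Theorem \ref{thm:quotmotive}(2), is only correct after relabeling the strata.

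\textbf{The repair.} This is what the paper does.
\begin{enumerate}
\item Keep the naive count.
\item Realize each forgetful map $\Quot^{k_1,\ldots,k_i}_{C/S}(\calO_C^i)\to\Quot^{k_1,\ldots,k_{i-1}}_{C/S}(\calO_C^{i-1})\times_S\Sym^{k_i}(C/S)$ globally as the total space of $\pi_*\mathscr{H}om(\mathfrak{I},\mathfrak{F}_{i-1}/\mathfrak{V}_{i-1})$, using the universal property of $\mathrm{Tot}$.
\item Prove local freeness. The paper extends to $\overline{C}$ and applies Grauert; this, rather than mere smoothness, is where the hypothesis on $\overline{C}$ enters.
\item Sum the ranks to get $\sum_j(r-j)k_j$, then reindex $k_i\mapsto k_{r+1-i}$. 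This is legitimate because the sum runs over all compositions of $k$, and $\mathrm{Conf}^k_\ve(C/S)/(\bbS_{k_1}\times\cdots\times\bbS_{k_r})$ depends on $(k_1,\ldots,k_r)$ only up to permutation.
\end{enumerate}

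Your treatment of $\ve$-stability via $\ell(p)=\sum_i\mathrm{mult}_p(D_i)$ is fine and matches the paper's fiber-diagram argument. One smaller point: refining $S$ cannot flatten the graded pieces. The jump in $(k_1,\ldots,k_r)$ already happens inside a single fiber of $\Quot\to S$, so the scheme structure on the strata must be handled on the Quot scheme itself.
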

\begin{proof}
    We first prove the formula in the case where $\ve$ is very small, i.e. when $\mathrm{Quot}^{k, \ve}_{C/S}(\mathcal{O}_C^r) = \Quot^{k}_{C/S}(\calO^r_C)$ and
\[\frac{\mathrm{Conf}_{\ve}^k(C/S)}{\bbS_{k_1} \times \bbS_{k_2} \times \cdots \times \bbS_{k_r}} = \Sym^{k_1}(C/S) \times_S \cdots \times_S \Sym^{k_r}(C/S).\]
The general case will follow from this one, which we now prove in several steps.
\\
    \noindent \fbox{\textit{Stratification by a flag.}} We fix the standard flag $0\subset \mathcal{F}_1\subset \mathcal{F}_2\subset \cdots\subset \mathcal{F}_r = \mathcal{O}_C^r$ with $\mathcal{F}_i = \bigoplus_{j = 1}^i \mathcal{O}_C.$ Given a point $q = [0\to \mathcal{V}\to \mathcal{O}_{C_s}^r\to \mathcal{T}\to 0],$ we define \[\mathcal{V}_i:=\mathcal{V}\cap \mathcal{F}_i\text{ and }\mathcal{T}_i:=\mathrm{coker}(\mathcal{V}_i\to \mathcal{F}_i),\] so that $\mathcal{T}_i$ are zero-dimensional quotient sheaves of $\mathcal{F}_i.$ Define \[\ell_1(q) :=\mathrm{lt}(\mathcal{T}_1),
    \ell_i(\mathcal{T}) :=  \mathrm{lt}(\mathcal{T}_i) - \mathrm{lt}(\mathcal{T}_{i-1})\text{ for }i>1.
    \] Note that $\mathrm{lt}(\mathcal{T}_r) = \mathrm{lt}(\mathcal{T}) = k.$ The assignment $q\mapsto (\ell_1(q),\dots, \ell_r(q))$ produces a locally closed stratification\footnote{The notation of the strata coincides with the ones defined in the proof of Lemma \ref{lem:indep-Quot}, while the two stratifications are a priori not the same. We apologize for the abuse of notation.} of $\mathrm{Quot}^k_{C/S}(\mathcal{O}_C^r)$ into $\mathrm{Quot}^{\ell_1,\dots, \ell_r}_{C/S}(\mathcal{O}_C^r)$ over $S.$

    \noindent\fbox{\textit{Vector bundles over fiber products of symmetric powers.}} 
    We produce a recursive description of the strata defined above. The base case is when $r = 1,$ in which the stratification is trivial, and we recover the relative symmetric power $\mathrm{Sym}^k(C/S).$ For general $r,$ on the fiber product \[\mathrm{Quot}^{\ell_1,\dots, \ell_{r-1}}_{C/S}(\mathcal{O}_C^{r-1})\times_{S} \mathrm{Sym}^{\ell_r}(C/S)\times_S C,\] let $
    \mathfrak{I}$ be the pullback of the universal ideal sheaf from $\mathrm{Sym}^{\ell_r}(C/S)\times_S C,$ and let \[0\to \mathfrak{V}_{r-1}\to \mathfrak{F}_{r-1}\cong \mathcal{O}_C^{r-1}\] be the pullback of the universal subsheaf on $\mathrm{Quot}^{\ell_1,\dots, \ell_{r-1}}_{C/S}(\mathcal{O}_C^{r-1})\times_{S} C.$ Let $$\pi: \mathrm{Quot}^{\ell_1,\dots, \ell_{r-1}}_{C/S}(\mathcal{O}_C^{r-1})\times_{S} \mathrm{Sym}^{\ell_r}(C/S)\times_S C\to \mathrm{Quot}^{\ell_1,\dots, \ell_{r-1}}_{C/S}(\mathcal{O}_C^{r-1})\times_{S} \mathrm{Sym}^{\ell_r}(C/S)$$ be the projection map. We claim that:
    \begin{enumerate}
        \item $\pi_* \mathscr{H}om(\mathfrak{I}, \mathfrak{F}_{r-1}/\mathfrak{V}_{r-1})$ is a locally free sheaf on $\mathrm{Quot}^{\ell_1,\dots, \ell_{r-1}}_{C/S}(\mathcal{O}_C^{r-1})\times_{S} \mathrm{Sym}^{\ell_r}(C/S)$ of rank $\mathrm{lt}(\mathfrak{F}_{r-1}/\mathfrak{V}_{r-1}) = \sum_{j=1}^{r-1}\ell_j,$
        \item the forgetful map $$\mathrm{Quot}^{\ell_1,\dots, \ell_{r} }_{C/S}(\mathcal{O}_C^{r})\to \mathrm{Quot}^{\ell_1,\dots, \ell_{r-1}}_{C/S}(\mathcal{O}_C^{r-1})\times_{S} \mathrm{Sym}^{\ell_r}(C/S)$$ given by \[[0\to \mathcal{V}\to \mathcal{O}_{C_s}^r\to \mathcal{T}\to 0]\mapsto \left([0\to \mathcal{V}_{r-1}\to \mathcal{O}_{C_s}^{r-1}\to \mathcal{T}_{r-1}\to 0], \mathrm{supp}(\calT/\calT_{r-1})\right),\]
        is isomorphic to the projection map from $\mathrm{Tot}(\pi_* \mathrm{Hom}(\mathfrak{I}, \mathfrak{F}_{r-1}/\mathfrak{V}_{r-1}))$ to the base.
    \end{enumerate}

    To prove (1), we first complete $C\to S$ to $\overline{C}\to S$. Consider now the fiber product
    \[\Quot_{C/S}^{\ell_1,\dots, \ell_{r-1}}(\mathcal{O}_{C}^{r-1})\times_S \Sym^{\ell_r}(C/S)\times_S \overline{C}, \]
    which fits into a diagram
    \[\hspace{-.4cm} \begin{tikzcd}
        &\Quot_{C/S}^{\ell_1,\dots, \ell_{r-1}}(\mathcal{O}_{C}^{r-1})\times_S \Sym^{\ell_r}(C/S)\times_S C \arrow[r, "j"]\arrow[dr, "\pi"] &\Quot_{C/S}^{\ell_1,\dots, \ell_{r-1}}(\mathcal{O}_{C}^{r-1})\times_S \Sym^{\ell_r}(C/S)\times_S \overline{C} \arrow[d, "\overline{\pi}"]\\
        & &\Quot_{C/S}^{\ell_1,\dots, \ell_{r-1}}(\mathcal{O}_{C}^{r-1})\times_S \Sym^{\ell_r}(C/S)
    \end{tikzcd}  \]
    
    where $j$ is an open immersion.  Let $\overline{\mathfrak{I}} := j_* \mathfrak{I}$ and $\overline{\mathfrak{T}}_{r - 1} := j_* (\mathfrak{V}_{r - 1}/\mathfrak{F}_{r - 1})$.
    We claim that there is an equality of sheaves
    \begin{equation}\label{eqn:sheaf_equality} \pi_* \mathscr{H}om(\mathfrak{I}, \mathfrak{F}_{r - 1}/\mathfrak{V}_{r - 1}) = \overline{\pi}_*\mathscr{H}om(\overline{\mathfrak{I}}, \overline{\mathfrak{T}}_{r - 1}). \end{equation}
    To see this, note that
    \[
        \pi_* \mathscr{H}om(\mathfrak{I}, \mathfrak{F}_{r - 1}/\mathfrak{V}_{r - 1}) = \overline{\pi}_* j_*\mathscr{H}om(\mathfrak{I}, \mathfrak{F}_{r - 1}/\mathfrak{V}_{r - 1}),
    \]
    and 
    \[j_*\mathscr{H}om(\mathfrak{I}, \mathfrak{F}_{r - 1}/\mathfrak{V}_{r - 1}) = \mathscr{H}om(\overline{\mathfrak{I}},\overline{\mathfrak{T}}_{r - 1}), \]
    as can be verified at the level of stalks. Over a point \[(q, D) = ([0\to \mathcal{V}_{r-1}\to \mathcal{O}_{C_s}^{r-1}\to \mathcal{T}_{r-1}\to 0], D\subset C_s)\in \mathrm{Quot}^{\ell_1,\dots, \ell_{r-1}}_{C/S}(\mathcal{O}_{C}^{r-1})\times_{S} \mathrm{Sym}^{\ell_r}(C/S),\] the pushforward $\overline{\pi}_*\mathscr{H}om(\overline{\mathfrak{I}}, \overline{\mathfrak{T}}_{r - 1})$ has stalk at $(q, D)$ given by $$\mathrm{Hom}(\mathcal{O}_{C_s}(-D), \mathcal{T}_{r-1})= \mathrm{Hom}(\mathcal{O}_{C_s}, \mathcal{T}_{r-1}(D)) = \mathrm{Hom}(\mathcal{O}_{C_s}, \mathcal{T}_{r-1}) = H^0(C_s, \mathcal{T}_{r-1}).$$Since $\overline{\pi}$ is proper and the stalk always has rank $\mathrm{lt}(\mathcal{T}_{r-1}) = \sum_{j=1}^{r-1}\ell_j,$ Grauert's theorem implies that \[\overline{\pi}_*\mathscr{H}om(\overline{\mathfrak{I}}, \overline{\mathfrak{T}}_{r - 1})\] is a locally free sheaf of rank $\Quot_{C/S}^{\ell_1,\dots, \ell_{r-1}}(\mathcal{O}_{C}^{r-1})\times_S \Sym^{\ell_r}(C/S)$. By the equality (\ref{eqn:sheaf_equality}) we see that $\pi_* \mathscr{H}om({\mathfrak{I}}, {\mathfrak{F}}_{r-1}/\mathfrak{V}_{r-1})$ is locally free of rank $\sum_{j=1}^{r-1}\ell_j$, which proves (1).

    For (2), we take the fiber product with $C$ on both sides of the forgetful map to get \[\Quot_{C/S}^{\ell_1,\dots, \ell_{r}}(\mathcal{O}_{C}^{r})\times_S C\to \Quot_{C/S}^{\ell_1,\dots, \ell_{r-1}}(\mathcal{O}_{C}^{r-1})\times_S \Sym^{\ell_r}(C/S)\times_S C.\]

    Abusing notation, we denote the pullback of the universal sheaves $\mathfrak{I}, \mathfrak{V}_{r-1},$ and $\mathfrak{F}_{r-1}$ along the morphism with the same notation. On the other hand, let $0\to \mathfrak{V}'_{r-1}\to \mathfrak{F}'_{r-1}$ and $0\to \mathfrak{V}'_{r}\to \mathfrak{F}'_{r}$ be the universal subsheaves on $\Quot_{C/S}^{\ell_1,\dots, \ell_{r}}(\mathcal{O}_{C}^{r})\times_S C.$ By construction, there are isomorphisms $[0\to \mathfrak{V}'_{r-1}\to \mathfrak{F}'_{r-1}]\cong [0\to \mathfrak{V}_{r-1}\to \mathfrak{F}_{r-1}]$ and $\mathfrak{I}\cong \mathfrak{V}'_{r}/\mathfrak{V}'_{r-1}.$ Therefore, the composition \[\mathfrak{I}\cong \mathfrak{V}'_{r}/\mathfrak{V}'_{r-1}\to \mathfrak{F}'_{r}/\mathfrak{V}'_{r-1} = \frac{\mathfrak{F}'_{r-1}\oplus \mathcal{O}}{\mathfrak{V}'_{r-1}} \cong \mathcal{O}\oplus \mathfrak{F}'_{r-1}/\mathfrak{V}'_{r-1}\to \mathfrak{F}'_{r-1}/\mathfrak{V}'_{r-1}\cong \mathfrak{F}_{r-1}/\mathfrak{V}_{r-1}\] on $\Quot_{C/S}^{\ell_1,\dots, \ell_{r}}(\mathcal{O}_{C}^{r})\times_S C$ produces a morphism\footnote{Let $\pi: Y\to Z,$ then the universal property of $\mathrm{Tot}(\pi_* \mathfrak{F})$ is that: a morphism $X\to \mathrm{Tot}(\pi_* \mathfrak{F})$ is the same as a morphism $X\to Z$ and a section of the pullback of 
    $\mathfrak{F}$ on $X\times_Z Y.$} \[\Quot_{C/S}^{\ell_1,\dots, \ell_{r}}(\mathcal{O}_{C}^{r})\to \mathrm{Tot}(\pi_* \mathscr{H}om(\mathfrak{I}, \mathfrak{F}_{r-1}/\mathfrak{V}_{r-1})).\] In the other direction, letting $\mathfrak{I}, \mathfrak{V}_{r-1},$ and $\mathfrak{F}_{r-1}$ be the pullback of the universal sheaves to $\mathrm{Tot}(\pi_* \mathscr{H}om(\mathfrak{I}, \mathfrak{F}_{r-1}/\mathfrak{V}_{r-1}))\times_S C,$ there is a universal morphism $\mathfrak{I}\to \mathfrak{F}_{r-1}/\mathfrak{V}_{r-1}$ on the fiber product. Let $\phi:\mathfrak{I}\to \mathfrak{F}_{r-1}/\mathfrak{V}_{r-1}\to \mathfrak{F}_{r}/\mathfrak{V}_{r-1}$ be the composition, and let $\mathfrak{V}'_r\subset \mathfrak{F}_r$ be the preimage of $\phi(\mathfrak{I})\subset \mathfrak{F}_{r}/\mathfrak{V}_{r-1}.$ By construction, $\mathfrak{V}'_r\subset \mathfrak{F}_r$ on $\mathrm{Tot}(\pi_* \mathscr{H}om(\mathfrak{I}, \mathfrak{F}_{r-1}/\mathfrak{V}_{r-1}))\times_S C$ defines a morphism \[\mathrm{Tot}(\pi_* \mathscr{H}om(\mathfrak{I}, \mathfrak{F}_{r-1}/\mathfrak{V}_{r-1}))\to \Quot_{C/S}^{\ell_1,\dots,\ell_r}(\mathcal{O}_C^r),\] which is the inverse of the morphism described above, and both isomorphisms commute with the forgetful map to $\mathrm{Quot}^{\ell_1,\dots, \ell_{r-1}}_{C/S}(\mathcal{O}_C^{r-1})\times_{S} \mathrm{Sym}^{\ell_r}(C/S).$

    \noindent\fbox{\textit{Formula.}} Iterating the vector bundle description, we see that \begin{equation}\label{eqn:iter_vb}
        [\Quot_{C/S}^{\ell_1,\dots, \ell_{r}}(\mathcal{O}_{C}^{r})] = \mathbb{L}^{\sum_{m = 1}^{r-1}\sum_{j=1}^{m}\ell_j}[\Sym^{\ell_1}(C/S)\times_S\times\cdots\times_S \Sym^{\ell_r}(C/S)],
    \end{equation} and $\sum_{m=1}^{r-1}\sum_{j=1}^m \ell_j = \sum_{i=1}^{r-1}(r-i)\ell_i.$ Setting $\ell_i = k_{r+1-i}$ leads to the formula in the case of small $\ve$.\\
    \noindent\fbox{\textit{General} $\ve$.} In the case of general $\ve$, we obtain a stratification of $\Quot^{k, \ve}_{C/S}(\calO^r_C)$ by the strata
    \[\Quot^{\ell_1, \ldots, \ell_r;\ve}_{C/S}(\calO_C^r) = \Quot^{\ell_1, \ldots, \ell_r}_{C/S}(\calO_C^r) \cap \Quot^{k, \ve}_{C/S}(\calO_C^r ).\]
    For each $\ell_1, \ldots, \ell_r$, we obtain the following fiber diagram:
    \[
    \begin{tikzcd}
        &\Quot^{\ell_1, \ldots, \ell_r;\ve}_{C/S}(\calO_C^r) \arrow[r] \arrow[d]&\Quot^{\ell_1, \ldots, \ell_r}_{C/S}(\calO_C^r)\arrow[d] \\
        &\frac{\mathrm{Conf}_{\ve}^k(C/S)}{\bbS_{\ell_1} \times \bbS_{\ell_2} \times \cdots \times \bbS_{\ell_r}}\arrow[r] &\Sym^{\ell_1}(C/S)\times_S\times\cdots\times_S \Sym^{\ell_r}(C/S)
    \end{tikzcd}.
    \]   
    Since the right vertical arrow factors as a sequence of vector bundles, the left vertical arrow must also factor as a sequence of vector bundles of the same ranks. Therefore (\ref{eqn:iter_vb}) holds for general $\ve$, if we replace the relative product of relative symmetric powers by the corresponding quotient of the $\ve$-weighted configuration space.
\end{proof}

\subsection{Stratifying $\ve$-stable quasimaps over smooth curves}
In this section, we relate the torsion length stratification of $\Q^{\ve}_{g,n}(\Gr(r,N),d)$ to weighted maps from smooth curves: see Definitions \ref{defn:vequasimaps}, \ref{defn:torsionstrat}, and \ref{defn:wtstablemaps} for the relevant notation.

\begin{notn}
    Let
\( \calC_{g, n}(\Gr(r, N), d-k) \to \M_{g, n}(\Gr(r, N), d-k) \) be the universal curve, and use \( \calC_{g, n}^\circ(\Gr(r, N), d-k) \subset \calC_{g, n}(\Gr(r, N), d-k)\)
to denote the complement of the $n$ marking sections. 
 
Consider the evaluation map \(ev: \calC_{g, n}^{\circ}(\Gr(r, N), d-k) \to \Gr(r, N). \) With $\calS$ denoting the tautological rank-$r$ vector bundle on $\Gr(r, N),$ we obtain a vector bundle $ev^*\calS$ on $\calC_{g, n}^\circ(\Gr(r, N), d-k)$.
\end{notn}

\begin{lem}\label{lem:smooth_qstrata}
There is an isomorphism 
    \[ \Q_{g, n}^{\ve,k}(\Gr(r, N), d) \cong \Quot^k_{\calC^{\circ}_{g, n}(\Gr(r, N), d-k)/\M_{g, n}(\Gr(r, N), d-k)}(ev^*\calS). \]
\end{lem}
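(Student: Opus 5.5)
The plan is to build mutually inverse morphisms of stacks. Going from left to right, we \emph{saturate} the subsheaf of a quasimap. Going from right to left, we \emph{compose} the universal subsheaf of the Quot scheme with the pullback of the tautological inclusion $\calS\hookrightarrow \calO^N$.

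\textbf{Step 1: pointwise construction.} Let $[(C,p_1,\dots,p_n),\ 0\to\calV\to\calO_C^N\to\calQ\to 0]$ be a point of $\Q^{\ve,k}_{g,n}(\Gr(r,N),d)$, so $C$ is smooth. Write $\calQ^{tf}=\calQ/\mathrm{tors}(\calQ)$ and set $\calV^{sat}:=\ker(\calO^N_C\to\calQ^{tf})$. On a smooth curve $\calQ^{tf}$ is locally free, so $\calV^{sat}$ is a rank-$r$ subbundle of $\calO^N_C$. Its degree is $-d+k$, since $\calV^{sat}/\calV\cong\mathrm{tors}(\calQ)$ has length $k$.
\begin{itemize}
\item The subbundle $\calV^{sat}$ defines a morphism $f\colon C\to\Gr(r,N)$ of degree $d-k$ with $f^*\calS=\calV^{sat}$.
\item The inclusion $\calV\subset f^*\calS$ gives a zero-dimensional quotient $f^*\calS\to\mathrm{tors}(\calQ)$ of length $k$.
\item Condition (3) of Definition \ref{defn:vequasimaps} says this torsion is supported away from the markings. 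Hence the quotient is a point of the Quot scheme of $ev^*\calS$ on $C^\circ=C\smallsetminus\{p_i\}$.
\end{itemize}

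Conversely, take a map $f$ of degree $d-k$ and a quotient $f^*\calS|_{C^\circ}\to\calT$ of length $k$. Extend $\calT$ by zero to $C$ and set $\calV=\ker(f^*\calS\to\calT)\subset f^*\calS\subset\calO^N_C$. Then $\calV$ is locally free of rank $r$ and degree $-d$. The quotient $\calO^N/\calV$ is an extension of $f^*\calQ_{\Gr}$ by $\calT$, so it is locally free at the markings and its torsion is exactly $\calT$. The two constructions are clearly inverse on points.

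\textbf{Step 2: stability matching.} For a smooth domain, $\ve$-stability reads $2g-2+n+\ve d>0$ together with the pointwise bound $\ve\,\ell(p)\le 1$. The stable map condition reads $2g-2+n>0$ or $d-k>0$. I would check directly that these agree in the range relevant to the paper ($g\ge 1$). I would also record that the pointwise length bound cuts out exactly the open subscheme $\Quot^{k,\ve}$ of Definition \ref{defn:quotve}. I would therefore read the right-hand side as the $\ve$-weighted Quot scheme, which is the version that the later stratification (Lemma \ref{lem:quot_strata_stacky}) uses. If some degenerate low-genus cases need excluding, I would state them explicitly here.

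\textbf{Step 3: families (the main obstacle).} Over an arbitrary base $T$ the saturation must commute with base change. That is, $\calQ^{tf}$ must be $T$-flat with torsion-free fibres. I would equip the stratum $\Q^{\ve,k}$ with the structure given by the flattening stratification of $\calQ$ by the length of its torsion, so that on this locus the fibrewise torsion has constant length $k$. Then I would argue as follows.
\begin{itemize}
\item Apply the relative Quot scheme of $\calQ$ with the quotient onto its fibrewise torsion-free part. Constant Hilbert polynomial, together with the smoothness of the fibres, gives that $\calQ^{tf}$ is flat and locally free, using the local criterion of flatness on the smooth family $\calC\to T$.
\item Consequently $\calV^{sat}$ is a family of subbundles, yielding a family of stable maps, and $\calV^{sat}/\calV$ is $T$-flat of relative length $k$.
\end{itemize}
In the reverse direction everything is manifestly flat: $\calV$ is the kernel of a surjection between flat sheaves, and the extension structure of $\calO^N/\calV$ persists in families. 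Finally, both constructions are compatible with automorphisms of the marked curve, so they give an isomorphism of stacks over $\M_{g,n}(\Gr(r,N),d-k)$.
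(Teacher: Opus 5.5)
Your construction is exactly the paper's proof: saturate $\calV$ to get the degree-$(d-k)$ map together with the torsion quotient of $ev^*\calS$, and conversely compose $\calE\subset f^*\calS\subset\calO^N_C$. The paper carries this out only pointwise, and in the inverse direction it silently writes $\Quot^{\ve,k}$. So your reading of the right-hand side, and your Steps 2--3, make explicit what the paper leaves implicit.

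One correction to Step 2: the two stability conditions do not agree for all $g\ge 1$. They agree for $g\ge 2$, and for $g=1$ with $n>0$ or $k<d$. They fail for $(g,n)=(1,0)$ with $k=d>0$. There the saturated map is a constant map from an unmarked genus-one curve, which is not a stable map. So the right-hand side is empty, while the left-hand side is not: for instance $\Q^{\ve,1}_{1,0}(\Gr(r,N),1)\neq\varnothing$ for $\ve\le 1$, consisting of an elliptic curve with one base point.

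This is a defect of the lemma as stated, not of your method. It is cured by working over the stack of maps from smooth curves without imposing stability on the base, and imposing $\ve$-stability on the Quot side instead, as the paper itself does in Definition \ref{defn:quotfrak}. Corollary \ref{cor:Qstrat_over_smooth} is unaffected, because the light points stabilize $\M_{1,0|\ve^k}(\Gr(r,N),0)$.
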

\begin{proof}
Let $(C, q_1, \ldots, q_n)$ be a smooth $n$-pointed curve and let \( 0 \to \calV \to \calO_C^{\oplus N} \to \calW \to 0 \)
be a $\ve$-stable quotient of degree $d$ over $C$, which determines a point of $\Q_{g, n}^{\ve,k}(\Gr(r, N), d)$. In particular, $\calV$ is locally free of rank $r$, and the torsion summand of the quotient sheaf
\( \calW^{tors} \subset \calW  \) has length $k.$
 
 By composing with the projection $\calW \to \calW^{free}:= \calW/\calW^{tors},$ we obtain a locally free quotient \[0 \to \calV^{sat} \to \calO_C^{\oplus N}\to \calW^{free} \to 0,\] where $\calV^{sat}$ is locally free of rank $r$. By the universal property of $\Gr(r, N)$, this determines a degree-$(d -k)$ morphism from $C$ to $\Gr(r, N)$, such that $\calV^{sat}$ is the pullback of the tautological bundle $\calS$. Note also that there is a short exact sequence
\[ 0 \to \calV \to \calV^{sat} \to \calW^{tors} \to 0,  \]
which determines the relative degree-$k$ quotient of $ev^*\calS$. We have thus constructed a morphism
\begin{equation}\label{eqn:Qtoquot_morphism}\Q_{g, n}^{\ve,k}(\Gr(r, N), d) \to \Quot^k_{\calC^{\circ}_{g, n}(\Gr(r, N), d-k)/\M_{g, n}(\Gr(r, N), d-k)}(ev^*\calS).
\end{equation}

An inverse can be constructed as follows. Let $f: C \to \Gr(r, N)$ be a degree-$(d-k)$ and let \( 0 \to \calE \to  f^*\calS \to \calT \to 0 \) and a degree-$k$ torsion quotient on $C.$ The map $f$ induces a short exact sequence
\(0 \to f^*\calS \to \calO_C^{\oplus N} \to \calW \to 0.\)

 We compose the inclusions $\calE \to f^*\calS$ and $f^*\calS \to \calO_C^{\oplus N}$ 
to get a short exact sequence
\[ 0 \to \calE \to \calO^{\oplus N}_C \to \calW' \to 0 \]
where $(\calW')^{tors} = \calT$ and $\calW'/\calT = \calW$. Since we have subtracted the sections from $\calC_{g, n}^{nrt}(\Gr(r, N), d-k)$, this procedure produces a morphism \[ \Quot^{\ve,k}_{\calC^{\circ}_{g, n}(\Gr(r, N), d-k)/\M_{g, n}(\Gr(r, N), d-k)}(ev^*\calS) \to \Qbar_{g, n}^{\ve, k}(\Gr(r, N), d)\]
and it is an inverse to (\ref{eqn:Qtoquot_morphism}). This completes the proof.
\end{proof}
Now, since
\[ \M_{g, n|\ve^k}(\Gr(r, N), d-k) = \mathrm{Conf}_{\ve}^{k}(\calC_{g, n}^\circ(\Gr(r, N), d-k)/\M_{g, n}(\Gr(r, N), d-k)), \]
we obtain the following corollary.

\begin{cor}\label{cor:Qstrat_over_smooth}
    We have the following equality in $K_0(\cat{Var})$:
    \[[\Q_{g, n}^{\ve,k}(\Gr(r, N), d)] = \sum_{\substack{k_1+\cdots + k_r = k\\ k_i\geq 0}}\mathbb{L}^{\sum_{i=1}^{r}(i-1) k_i}\left[\frac{\M_{g, n|\ve^{k}}(\Gr(r, N), d-k)}{\bbS_{k_1} \times \bbS_{k_2} \times \cdots \times \bbS_{k_r}}\right].\]
    The same equality holds for $\bbS_n$-equivariant Serre characteristics.
\end{cor}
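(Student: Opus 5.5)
The plan is to chain together three earlier results: Lemma \ref{lem:smooth_qstrata}, Lemma \ref{lem:indep-Quot}, and Proposition \ref{prop:relquot}. These should be applied to the family $\pi:\calC^{\circ}_{g,n}(\Gr(r,N),d-k)\to \M_{g,n}(\Gr(r,N),d-k)$ with the bundle $\calE = ev^*\calS$.

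First, Lemma \ref{lem:smooth_qstrata} identifies $\Q^{\ve,k}_{g,n}(\Gr(r,N),d)$ with a relative Quot scheme of $ev^*\calS$. Its proof actually lands in the $\ve$-truncated version $\Quot^{k,\ve}$. This is because the $\ve$-stability condition on quasimaps over smooth curves is precisely the bound $\ve\,\ell(p)\le 1$ on torsion lengths. The ampleness condition on a smooth curve is automatic once $2g-2+n+\ve d>0$, and I would check it matches the stability of the underlying map of degree $d-k$ together with weighted points. So I will use the isomorphism with $\Quot^{k,\ve}_{\calC^\circ/\M}(ev^*\calS)$.

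Next, Lemma \ref{lem:indep-Quot} replaces $ev^*\calS$ by $\calO^r$ in $K_0(\cat{Var})$. Then Proposition \ref{prop:relquot} applies with $\overline{C}$ the universal curve, which is smooth with the $n$ sections, and $C$ the complement of all sections. This gives
\[\sum_{k_1+\cdots+k_r=k}\bbL^{\sum(i-1)k_i}\Big[\mathrm{Conf}^k_\ve(\calC^\circ/\M)\big/(\bbS_{k_1}\times\cdots\times\bbS_{k_r})\Big].\]
Finally, the displayed identification $\M_{g,n|\ve^k}(\Gr(r,N),d-k)=\mathrm{Conf}^k_\ve(\calC^\circ/\M)$ finishes the non-equivariant statement. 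That identification holds because weighted points may not hit the weight-one markings and at most $\lfloor 1/\ve\rfloor$ of them may collide.

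A technical point is that the base $\M_{g,n}(\Gr(r,N),d-k)$ is a Deligne–Mumford stack rather than a variety. I would either work in the Grothendieck ring of stacks with affine stabilizers, or stratify the base by quotient stacks $[Y/G]$ with finite $G$. All the constructions in Lemma \ref{lem:indep-Quot} and Proposition \ref{prop:relquot} are compatible with such stratifications and with Zariski-locally trivial fibrations, so passing to coarse spaces or Serre characteristics is harmless.

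For the $\bbS_n$-equivariant statement, the main obstacle is equivariance. The stratification in Lemma \ref{lem:indep-Quot} uses trivializing opens of $ev^*\calS$, which need not be $\bbS_n$-stable. I would avoid this by working over the quotient $\bbS_n$-cover, or better, by applying the argument to the base $[\M_{g,n}(\Gr(r,N),d-k)/H]$ for each subgroup $H\subseteq\bbS_n$. The equivariant Serre characteristic is determined by the Serre characteristics of such quotients, or by the traces $\mathrm{Tr}(\sigma)$, which one gets by computing fixed-point-twisted classes. Since $\bbS_n$ acts only on the base and on the sections, both removed, every construction in the proof of Proposition \ref{prop:relquot} is natural in the base $S$. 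The flag stratification and the iterated vector bundles are canonical, so they commute with the $\bbS_n$-action. The vector bundle fibrations are then $\bbS_n$-equivariant and contribute $\bbL^{\mathrm{rank}}$ equivariantly, since equivariant vector bundles have Serre characteristic equal to $\bbL^{\mathrm{rk}}$ times the base by the Leray–Hirsch/Thom isomorphism on compactly supported cohomology. The independence step is only needed at the level of Serre characteristics, where the non-equivariance can be handled by the quotient-by-$H$ argument.
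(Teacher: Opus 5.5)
This is correct and is the paper's own argument: Lemma~\ref{lem:smooth_qstrata} (with $\Quot^{k,\ve}$, which its proof in fact uses), then Lemma~\ref{lem:indep-Quot} (which the paper invokes only tacitly), Proposition~\ref{prop:relquot}, the identification with $\M_{g,n|\ve^k}(\Gr(r,N),d-k)$, and the $\bbS_n$-equivariant Thom isomorphism. Your reduction of the non-equivariant independence step to quotients by subgroups $H\subseteq\bbS_n$ is extra care on a point the paper passes over, not a different route.

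The stability check you defer is shared with the paper and is only clean when $2g-2+n>0$. If $d=k$ and $2g-2+n\le 0$, the saturated map is an unstable constant map, so the base must be taken prestable as in Lemma~\ref{lem:quot_strata_stacky}. If $g=0$, $n\le 1$, $d>k$ and $\ve d\le 2-n$, the quasimap side is empty (since $2g-2+n+\ve d\le 0$) while $\M_{0,n|\ve^k}(\Gr(r,N),d-k)$ is not, so there the identity fails as literally stated.
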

\begin{proof}
    The statement about the equality in the Grothendieck ring is an immediate corollary of Lemma \ref{lem:smooth_qstrata} and Proposition \ref{prop:relquot}. The statement about $\bbS_n$-equivariant Serre characteristics follows the same proof as Proposition \ref{prop:relquot}, using the following fact: if $\calE \to X$ is an $\bbS_n$-equivariant rank-$s$ vector bundle, then
    \(\cat{e}^{\bbS_n}(\calE) = \bbL^{s} \cat{e}^{\bbS_n}(X). \)
    
    This is proved using the Thom isomorphism
    \( H^{k}_c(\calE;\QQ) \cong H_c^{k}(X;\QQ)(-r), \)
    induced by cupping by the Thom class of $\calE$. Since $\bbS_n$ acts by algebraic automorphisms, this Thom class is preserved by $\bbS_n,$ so the Thom isomorphism above is $\bbS_n$-equivariant.
\end{proof}

\subsection{Stratifying $\ve$-stable quasimaps over nodal curves}
The goal of this section is to prove Theorem \ref{thm:relquot-to-weightedstablemaps}, a comparison formula between a stratification of the moduli space of $\ve$-stable quasimaps with no rational tails and the heavy-light stable maps. It extends Corollary \ref{cor:Qstrat_over_smooth} to maps from maps without rational tails and is the key geometric ingredient of the main formula in Theorem \ref{thm:main}.

We recall that
\(\Mbar_{g, n}^{nrt}(\Gr(r, N),d)\) resp. \(\Qbar_{g, n}^{\ve,nrt}(\Gr(r, N),d)\)
denotes the locus of stable maps resp. $\ve$-stable quasimaps whose source curves do not have any rational tails and that, for $0\leq k\leq d,$ \(\Qbar_{g, n}^{\ve, nrt, k}(\Gr(r, N), d)\) denotes the locus of $\ve$-stable quasimaps over curves without rational tails, such that the torsion of the quasimap has length $k.$ We now work towards the following analogue of Corollary \ref{cor:Qstrat_over_smooth}, by generalizing the proof of Proposition \ref{prop:relquot}.

\begin{thm}\label{thm:relquot-to-weightedstablemaps}
    We have an equality
    \[ [\Qbar_{g, n}^{\ve, nrt, k}(\Gr(r, N), d)] =  \sum_{{\substack{{k_1 + \ldots + k_r = k}\\{k_i \geq 0 \,\forall \,i}}} } \bbL^{\sum_{i=1}^r(i-1)k_i}\left[\frac{\Mbar_{g, n|\ve^k}^{nrt}(\Gr(r, N), d-k)}{\bbS_{k_1} \times \bbS_{k_2} \times \cdots \times \bbS_{k_r}} \right] \]
    in $K_0(\cat{Var})$. The same equality holds for $\bbS_n$-equivariant Serre characteristics.
\end{thm}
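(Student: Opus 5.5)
We extend Lemma \ref{lem:smooth_qstrata} from smooth to nodal domains without rational tails, and then run Proposition \ref{prop:relquot} over a base that parametrizes prestable, rather than stable, maps. Start with a point of $\Qbar_{g,n}^{\ve,nrt,k}(\Gr(r,N),d)$, given by $[(C,p_i), 0\to\calV\to\calO_C^N\to\calQ\to 0]$. Since $\calQ$ is locally free at nodes and markings, its torsion $\calQ^{tors}$ has length $k$ and is supported on the smooth unmarked locus $C^\circ$. As in Lemma \ref{lem:smooth_qstrata}, saturating $\calV$ gives $\calV^{sat}$ and a morphism $f: C\to\Gr(r,N)$ of degree $d-k$. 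It also gives a torsion quotient $f^*\calS\to\calQ^{tors}$ supported on $C^\circ$.

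The map $f$ need not be stable. A rational component $T$ with $\deg(\calV|_T)>0$ that is contracted by $f$ is stabilized only by the torsion it carries. Note that $T$ cannot be a rational tail, and rational bridges are allowed. So we work over the Artin stack $\mathfrak{M}^{nrt}_{g,n}(\Gr(r,N),d-k)$ of prestable maps without rational tails. Over it we form the relative Quot scheme $\Quot^{k}$ of $ev^*\calS$ on the universal curve with nodes and markings removed. Inside it we take the open locus $\Quot^{k,\ve,st}$ cut out by two conditions:
\begin{itemize}
\item[(a)] the pointwise condition $\ell(p)\le 1/\ve$, as in Definition \ref{defn:quotve};
\item[(b)] the stability inequality $2g(T)-2+n_T+\ve(\deg f|_T+\ell(T))>0$ on every component $T$, where $\ell(T)$ is the torsion length on $T$.
\end{itemize}
The inverse construction of Lemma \ref{lem:smooth_qstrata} carries over verbatim. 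Composing $\calE\subset f^*\calS\subset\calO^N$ gives an isomorphism $\Qbar^{\ve,nrt,k}_{g,n}(\Gr(r,N),d)\cong\Quot^{k,\ve,st}$; this is Lemma \ref{lem:quot_strata_stacky}.

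Next we apply the filtration argument of Proposition \ref{prop:relquot} to this relative Quot scheme. The stratification by $(\ell_1,\dots,\ell_r)$ and the iterated total spaces of $\pi_*\mathscr{H}om(\mathfrak I,\mathfrak F_{r-1}/\mathfrak V_{r-1})$ are local over the base. The base $S$ may be stratified so that it is a scheme, or a quotient stack on which the classes make sense, for example by restricting to a finite type open substack containing the image. Lemma \ref{lem:indep-Quot} then replaces $ev^*\calS$ by $\calO^r$. This expresses the class of the stratum as $\bbL^{\sum(i-1)k_i}$ times the class of the locus in $\mathrm{Conf}^k_\ve(\mathcal{C}^\circ/\mathfrak{M})/(\bbS_{k_1}\times\cdots\times\bbS_{k_r})$ cut out by condition (b). Condition (b) depends only on the support divisor, so it pulls back along the vector-bundle tower, exactly as in the general-$\ve$ step of Proposition \ref{prop:relquot}.

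The remaining identification is that the locus in $\mathrm{Conf}^k_\ve$ satisfying (b) equals $\Mbar^{nrt}_{g,n|\ve^k}(\Gr(r,N),d-k)$. Regard the $k$ support points as ordered weight-$\ve$ markings. Collision bounded by $1/\ve$ is condition (3) of Definition \ref{defn:wtstablemaps}. Condition (b), restricted to contracted components, is condition (4) there, since $\ve\,\ell(T)=\sum_{p_i\in T}w_i$ for the light points. On non-contracted components, (b) holds automatically in both settings once $\ve$-positivity is satisfied; here we need care that Definition \ref{defn:wtstablemaps} imposes no condition on non-contracted components. The no-rational-tails condition matches because adding light markings does not change the dual graph.

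I expect this step to be the main obstacle. One must check that the stack-theoretic quotient by $\bbS_{k_1}\times\cdots\times\bbS_{k_r}$ and automorphisms of prestable maps are compatible. Stability removes infinite automorphisms, so the resulting stacks are Deligne--Mumford, and classes of coarse spaces or quotients behave well. The $\bbS_n$-equivariant statement then follows as in Corollary \ref{cor:Qstrat_over_smooth}, by the equivariant Thom isomorphism applied to each vector-bundle step.
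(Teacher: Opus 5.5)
Your proposal is correct and follows essentially the same route as the paper. You identify the stratum with a stability-cut relative Quot stack of $ev^*\calS$ over the Artin stack of prestable no-rational-tails maps of degree $d-k$ (Lemma \ref{lem:quot_strata_stacky}), pass to $\calO^{\oplus r}$ as in Lemma \ref{lem:stacky_vb_independence}, run the flag stratification of Proposition \ref{prop:relquot}, and match the stable locus of the weighted configuration space with $\Mbar^{nrt}_{g,n|\ve^k}(\Gr(r,N),d-k)$ through a fiber diagram. Your condition (b) is the full quasimap inequality, which for $g\geq 1$ curves without rational tails is equivalent to the paper's condition that every rational bridge contracted by $f$ carries torsion, and your observation that (b) depends only on the support divisor is exactly what makes the paper's fiber-diagram argument go through.
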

The remainder of this section will be devoted to the proof of Theorem \ref{thm:relquot-to-weightedstablemaps}. Parallel to Lemma \ref{lem:smooth_qstrata}, the first step is to express the strata $\Qbar_{g, n}^{\ve, nrt, k}(\Gr(r, N), d)$ in terms of relative Quot schemes via the saturation map $\Qbar_{g, n}^{\ve, nrt, k}(\Gr(r, N), d)\to \Mbar_{g,n}^{nrt}(\Gr(r, N), d-k)$. Unlike the case of smooth curves, the universal curves on the domain and the target are different in general: the map contracts bivalent, unmarked rational components on which the quotient sheaf from the quasimap has degree zero saturation. Therefore, we are required to consider stacks of prestable maps. We set up the following notation.

\begin{defn}
    Let $\Mps^{nrt}_{g, n}(\Gr(r, N), d)$ denote the Artin stack of degree-$d$ maps from $n$-pointed prestable curves without rational tails to $\Gr(r, N),$ with universal curve denoted as \[\Cps^{nrt}_{g, n}(\Gr(r, N), d) \to \Mps_{g, n}
 ^{nrt}(\Gr(r, N), d).\] Again we will consider the open subset
 \(\Cps^{nrt, \circ}_{g, n}(\Gr(r, N), d) \subset \Cps^{nrt}_{g, n}(\Gr(r, N), d) \) as the complement of the markings and the nodes.

 Emulating the construction of the previous section, we let $ev^*\calS$ denote the rank-$r$ vector bundle on $\Cps^{nrt, \circ}_{g, n}(\Gr(r, N), d)$ obtained by pulling back the tautological bundle on $\Gr(r, N)$.
 
 Let \(\Quot^{k, \ve}_{\Cps^{\circ}/\Mps^{nrt}_{g, n}(\Gr(r,N),d)}(ev^*\calS)\) denote the relative Quot scheme of zero-dimensional degree-$k$ quotients on the family of curves \(\Cps^{nrt, \circ}_{g, n}(\Gr(r, N), d)\to \Mps^{nrt}_{g, n}(\Gr(r, N), d)\) which satisfies\footnote{See Definition \ref{defn:quotve} for the definition of $\ell(p).$} $\ell(p) \leq 1/\ve$ for all points $p.$ It is representable over $\Mps^{nrt}_{g, n}(\Gr(r, N), d)$ and is an Artin stack.
\end{defn}

\begin{defn}\label{defn:quotfrak}
    Define
 \[\Quot^{k, \ve, stab}_{\Cps^{\circ}_{g, n}/\Mps^{nrt}_{g, n}(\Gr(r,N),d)}(ev^*\calS) \subset \Quot^{k, \ve}_{\Cps^{\circ}/\Mps^{nrt}_{g, n}(\Gr(r, N), d)}(ev^*\calS) \]
 be the closed substack parametrizing tuples \[\left([f: C \to \Gr(r, N)] \in \Mps^{nrt}_{g, n}(\Gr(r, N), d), \mathcal{O}^{N}_C\to \mathcal{Q}\to 0\right),\] which satisfies the following stability condition: if $f$ contracts a rational bridge $T \subset C$, then $\calQ$ has positive degree along $T$. 
\end{defn}

\begin{notn}In the rest of this section, we fix the degree $d$ throughout. For a rank-$r$ vector bundle $\calE$ on $\Cps^{\circ}$, we set
\[ \Quot^{k, \ve}_{\Cps^\circ/\Mps}(\calE) := \Quot^{k, \ve}_{\Cps^{\circ}/\Mps^{nrt}_{g, n}(\Gr(r, N), d-k)}(\calE)\] for ease of notation. Similar for $\Quot^{k, \ve, stab}_{\Cps^\circ/\Mps}(\calE).$\end{notn}
   
 \begin{lem}\label{lem:quot_strata_stacky}
     For any $k$ with $0\leq k \leq d$, there is an isomorphism
     \[ \Qbar_{g, n}^{\ve, nrt, k}(\Gr(r, N), d) \cong \Quot^{k,\ve, stab}_{\Cps^{\circ}/\Mps}(ev^*\calS). \]
 \end{lem}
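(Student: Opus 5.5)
We follow the proof of Lemma \ref{lem:smooth_qstrata}, constructing mutually inverse morphisms in families. We work over an arbitrary base $T$, so that the constructions give morphisms of stacks.

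\textbf{From quasimaps to stabilized Quot data.} Start with a family $[(C,p_i),\, 0\to\calV\to\calO_C^N\to\calQ\to 0]$ in $\Qbar^{\ve,nrt,k}_{g,n}(\Gr(r,N),d)(T)$. On this stratum the torsion length of $\calQ$ is constant equal to $k$. The torsion is supported away from nodes and markings, since $\calQ$ is locally free there by Definition \ref{defn:vequasimaps}(3). Constancy of the length, together with flatness, implies that $\calQ/\calQ^{tors}$ is flat and fiberwise locally free. The saturation $\calV^{sat}=\ker(\calO^N_C\to\calQ/\calQ^{tors})$ is therefore a rank-$r$ subbundle with locally free quotient, and so defines a morphism $f\colon C\to\Gr(r,N)$ of degree $d-k$ with $\calV^{sat}=f^*\calS$.

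We do not stabilize $f$. The pair $(C,f)$ is a point of the Artin stack $\Mps^{nrt}_{g,n}(\Gr(r,N),d-k)$: the curve $C$ has no rational tails and is prestable. The sequence $0\to\calV\to f^*\calS\to\calQ^{tors}\to0$ gives a $T$-flat degree-$k$ quotient of $ev^*\calS$ on $C^\circ$. It satisfies $\ell(p)\le 1/\ve$ because this is the first bullet of $\ve$-stability.

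It remains to check the stability of Definition \ref{defn:quotfrak}. Let $T_0$ be a rational bridge contracted by $f$. It has two special points and $\deg f^*\calS^\vee|_{T_0}=0$, so $\deg(\wedge^r\calV^*)|_{T_0}$ equals the torsion length on $T_0$. The second stability bullet, $0+\ve\cdot(\text{torsion on }T_0)>0$, then forces positive torsion on $T_0$. The same bookkeeping handles every other component: we have
\[
2g(T_0)-2+n_{T_0}+\ve\deg\calV^*|_{T_0}=2g(T_0)-2+n_{T_0}+\ve\bigl(\deg f^*\calS^\vee|_{T_0}+\mathrm{tors}(T_0)\bigr).
\]
On non-contracted components the right-hand side is automatically positive, since $n_{T_0}\ge 1$ in the absence of rational tails, except on smooth genus-$0$ components, which cannot occur here.

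\textbf{Inverse.} Conversely, suppose given $(C,f)\in\Mps^{nrt}(T)$ and a quotient $f^*\calS|_{C^\circ}\to\calT$ in the stable locus. Since $\calT$ is supported on $C^\circ$, which is finite over $T$, the kernel extends uniquely to a subsheaf $\calV\subset f^*\calS$ on $C$. The kernel is locally free of rank $r$, because on a smooth curve over a flat base a kernel of a flat zero-dimensional quotient of a bundle is a bundle. Composing $\calV\subset f^*\calS\subset\calO^N_C$ gives a quasimap of degree $d$ with torsion $\calT$ of length $k$, and this quotient is locally free at nodes and markings.

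Ampleness is checked component by component, using the displayed identity above. The components are of three kinds:
\begin{enumerate}
\item components of positive genus, or with $n_{T_0}\ge 3$, are fine;
\item non-contracted rational components with $n_{T_0}=2$ get positive degree from $f$;
\item contracted rational bridges get positive torsion by the stability condition of Definition \ref{defn:quotfrak}.
\end{enumerate}
Rational components with $n_{T_0}\le1$ are excluded, since $g\ge1$ and there are no rational tails. The two constructions are visibly inverse: saturating recovers $f$, and the torsion is recovered as $\calT$. They are functorial in $T$, and so give the isomorphism of stacks.

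\textbf{Main obstacle.} The key step is the first one, namely showing that saturation commutes with base change over the stratum, i.e.\ that $\calQ/\calQ^{tors}$ is $T$-flat with formation of $\calQ^{tors}$ compatible with base change. This is exactly why we restrict to a locally closed stratum of constant torsion length. Over a possibly non-reduced $T$ the argument needs care: we would handle it by taking $\calQ^{tors}$ to be the universal quotient pulled back from the flattening stratification of $\calQ$. Alternatively, we could note that only the induced equality of classes in $K_0$ is needed, which allows us to work with reduced bases, where constancy of the fiberwise length gives flatness.
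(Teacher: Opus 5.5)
Your proposal is correct and follows the paper's route. The paper's proof only observes that $\ve$-stability splits into the length bound defining $\Quot^{k,\ve}$ plus the positive-torsion condition on contracted rational bridges, and otherwise repeats the saturation argument of Lemma \ref{lem:smooth_qstrata}. Your component-by-component degree bookkeeping and your handling of base change over the constant-torsion stratum (restricting to reduced bases, which suffices for the $K_0$ applications) supply details the paper leaves implicit.

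One small slip is in your item (1): a smooth unmarked genus-one curve has $2g(T_0)-2+n_{T_0}=0$, so it needs $d>0$. The degenerate case $(g,n,d)=(1,0,0)$ therefore has to be excluded, which the paper's statement also tacitly ignores.
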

 \begin{proof}
     We observe that the $\ve$-quasimap stability matches with the definition of $\Quot^{k,\ve}$ together with the stability condition in Definition \ref{defn:quotfrak}. The rest of the proof is identical to that of Lemma \ref{lem:smooth_qstrata}.
 \end{proof}
 To study the motive of the relative Quot stack above, we will compare it with the trivial bundle, as in Lemma \ref{lem:indep-Quot}.
\begin{lem}\label{lem:stacky_vb_independence}
    For any $k$ with $0\leq k \leq d$, there is an equality
     \[ [\Quot^{k, \ve, stab}_{\Cps^{\circ}/\Mps}(ev^*\calS)] = [\Quot^{k, \ve, stab}_{\Cps^{\circ}/\Mps}(\calO^{\oplus r})] \]
     in $K_0(\cat{Var})$. The same equality holds for $\bbS_n$-equivariant Serre characteristics.
\end{lem}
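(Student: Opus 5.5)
The plan is to run the proof of Lemma \ref{lem:indep-Quot} over the Artin stack $\Mps := \Mps^{nrt}_{g,n}(\Gr(r,N),d-k)$ instead of a scheme $S$, and to check that the extra stability condition of Definition \ref{defn:quotfrak} survives each step. First I would reduce to a finite type situation. Although $\Mps$ is not of finite type, the stable locus $\Quot^{k,\ve,stab}_{\Cps^\circ/\Mps}$ is, by Lemma \ref{lem:quot_strata_stacky}, a finite type Deligne--Mumford stack. The same holds for the stable locus with $\calO^{\oplus r}$, since the stability condition depends only on the lengths of the quotient over contracted bridges. So only a finite type open substack $\Mps_0\subset\Mps$ matters: those prestable maps whose number of contracted rational bridges is at most $k$. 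Both Quot stacks are pulled back from $\Mps_0$, so I may replace $\Mps$ by $\Mps_0$. Next I would stratify $\Mps_0$ by the topological type of the marked map, meaning its dual graph together with the degree distribution. On each stratum the set of contracted bridges is locally constant. Cut-and-paste over the base then holds exactly as in the first paragraph of the proof of Lemma \ref{lem:indep-Quot}, because the stability condition is pulled back from the base stratum.

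On a fixed stratum $S$, the family $\Cps^\circ_S\to S$ is a disjoint union, after an étale (or finite) base change, of families of smooth curves, one per component with nodes and markings removed. I would identify the stability condition with a condition on the support of the torsion quotient: the quotient must have positive length on each of the finitely many distinguished components, namely the contracted bridges. Using the support morphism to $\Sym^k_{\Cps^\circ_S/S}$ and refining it by how many points lie on each component, the condition becomes the selection of a union of strata of $\Sym^k$ that is independent of $\calE$. The argument of Lemma \ref{lem:indep-Quot} then applies verbatim. I would refine $S$ and choose a finite locally closed cover $\{W_j\}$ of $\Cps^\circ_S$, flat over $S$, with $\calE=ev^*\calS$ trivial on an open $U_j\supset W_j$. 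Then I would decompose by the support multidegree $(k_1,\dots,k_\ell)$ into fiber products of the pieces $\Quot^{k_j,\ve}_{W_j\subset X/S}$, and use extension by zero to identify each piece with its $\calO^{\oplus r}$ counterpart. Every piece lives over a fixed stratum of $\Sym^k$, so the stability condition is carried along by these isomorphisms and the Grothendieck classes agree.

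For the $\bbS_n$-equivariant Serre characteristic statement, I would take all stratifications and covers to be $\bbS_n$-invariant, where $\bbS_n$ permutes the markings. This is possible because the topological-type stratification is $\bbS_n$-stable and $ev^*\calS$ is $\bbS_n$-equivariant. The pieces are then permuted by $\bbS_n$, and the isomorphisms can be chosen compatibly on induced orbits, since a piece $W_j$ is sent to $\sigma W_j$ with the transported trivialization. The additivity of $\cat{e}^{\bbS_n}$ over equivariant stratifications and inductions gives the result.

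I expect the main obstacle to be the stacky and equivariant bookkeeping. Specifically, I need to produce the trivializing cover on each stratum in a way that is both well defined on the stack, possibly after passing to a coarse or étale chart and using that classes of DM stacks with finite stabilizers behave well here, and compatible with the $\bbS_n$-action. My first attempt would be to work on the finite type DM stack obtained after imposing stability, where the automorphisms of contracted bridges are already killed by the positive torsion, and to pass to a scheme-theoretic atlas there. If that fails, I would fall back on $\bbS_n$-invariant refinements of the cover chosen by averaging over orbits.
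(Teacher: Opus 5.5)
Your proposal is correct and follows essentially the paper's argument. You rerun Lemma \ref{lem:indep-Quot} over the stack and observe that stability depends only on the support of the torsion quotient, so the support-stratum-wise isomorphisms restrict to the stable loci; the paper encodes this as a fiber square over $\Mbar^{nrt}_{g,n|\ve^k}(\Gr(r,N),d-k)/\bbS_k\hookrightarrow \mathrm{Conf}^k_{\ve}/\bbS_k$, while you phrase it as positive length on each contracted bridge. For the stacky and equivariant bookkeeping you flag, take the trivializing opens to be $ev^{-1}(U)$ for the standard affine charts $U\subset\Gr(r,N)$, with the trivialization pulled back from $\calS|_U$, and take the pieces $W_j$ to be preimages of Schubert cells. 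Since $ev$ is invariant under both automorphisms of maps and $\bbS_n$, these choices are well defined on the stack and automatically $\bbS_n$-invariant, so no averaging is needed.
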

\begin{proof}
    We observe that the statement of Lemma \ref{lem:indep-Quot} still holds for a family of smooth curves over an Artin stack as a piecewise isomorphism relative to the same base. More precisely, let $C\to S$ a flat family of curves over an Artin stack $S,$ and let $\calE$ be a vector bundle on $C.$ By passing to smooth charts of $C$ and $S$ if necessary, there still exist finite locally closed stratifications \[S = \bigsqcup_{i\in I} S_i, \pi^{-1}(S_i) = \bigsqcup_{j\in \calA_i} W_j\] satisfying the same requirements as in the proof of Lemma \ref{lem:indep-Quot}. Then the fibers of $\Quot_{C/S}^{k,\ve}(\calE)$ and $\Quot_{C/S}^{k,\ve}(\calO^{r})$ over each stratum in \begin{equation}\label{eqn:SymW}
        \bigsqcup_{(k_i\geq 0): \sum_{ i = 1}^\ell k_i = k} \ \Sym_{W_1/S}^{k_1} \times_S \cdots \times_S \Sym_{W_\ell/S}^{k_\ell}\subset \Sym^k_{C/S}
    \end{equation} are isomorphic.
    There is a natural embedding
\[ \Mbar_{g, n|\ve^k}^{nrt}(\Gr(r, N), d-k) \hookrightarrow \mathrm{Conf}_{\ve}^{k} (\Cps^{nrt, \circ}_{g, n}(\Gr(r, N), d-k)/\Mps^{nrt}_{g, n}(\Gr(r, N), d-k) ).\] The stability condition in Definition \ref{defn:quotfrak} leads to the following fiber diagram \[ \begin{tikzcd}
    &\Quot^{k, \ve, stab}_{\Cps^\circ/\Mps}(\calE) \arrow[d] \arrow[r] &\Quot^{k, \ve}_{\Cps^\circ/\Mps}(\calE) \arrow[d]\\
    &\frac{\Mbar_{g, n|\ve^k}^{nrt}(\Gr(r, N), d-k)}{\bbS_k} \arrow[r] & \frac{\mathrm{Conf}_{\ve}^{k} (\Cps^{nrt}/\Mps^{nrt}_{g, n}(\Gr(r, N), d-k) )}{\bbS_k}
\end{tikzcd} \]

    The moduli space $\frac{\Mbar_{g, n|\ve^k}^{nrt}(\Gr(r, N), d-k)}{\bbS_k}$ is stratified by its intersections with the strata from (\ref{eqn:SymW}). The fiber diagram implies that the fibers of $\Quot^{k, \ve, stab}_{\Cps^\circ/\Mps}(\calE)$ and $\Quot^{k, \ve, stab}_{\Cps^\circ/\Mps}(\calO^r)$ over the strata in $\frac{\Mbar_{g, n|\ve^k}^{nrt}(\Gr(r, N), d-k)}{\bbS_k}$ are isomorphic. Summing across these strata, we get the desired equality of Grothendieck ring classes, which implies the statement about $\bbS_n$-equivariant Serre characteristics by the proof of Corollary \ref{cor:Qstrat_over_smooth}.
\end{proof}
Using the previous two lemmas, we can now prove Theorem \ref{thm:relquot-to-weightedstablemaps}.

\begin{proof}[Proof of Theorem \ref{thm:relquot-to-weightedstablemaps}]


By fixing a flag of the trivial bundle $\calO^{\oplus r}$, we can emulate the proof of Proposition \ref{prop:relquot} to obtain a stratification
\[\Quot^{k, \ve}_{\Cps^\circ/\Mps}(\calO^{\oplus r}) = \coprod_{\substack{{\ell_1 + \cdots + \ell_r = k}\\{\ell_i \geq 0}}} \Quot^{\ell_1, \ldots, \ell_r, \ve}_{\Cps^\circ /\Mps}(\calO^{\oplus r}) \]
where 
\[\Quot^{\ell_1, \ldots, \ell_r, \ve}_{\Cps^\circ /\Mps}(\calO^{\oplus r}) \to \mathrm{Conf}_{\ve}^{k}(\Cps^{nrt, \circ}_{g, n}(\Gr(r, N), d-k)/\Mps^{nrt}_{g, n}(\Gr(r, N), d-k)) \]
is an iterated vector bundle as in Proposition \ref{prop:relquot}. We intersect the stratification with the stable locus and obtain 
\[ \Quot^{k, \ve, stab}_{\Cps^\circ/ \Mps}(\calO^{\oplus r}) = \coprod_{\ell_1, \ldots, \ell_r} \Quot^{\ell_1, \ldots, \ell_r, \ve,stab}_{\Cps^\circ/ \Mps}(\calO^{\oplus r}). \]
Refining the fiber diagram in the proof of Lemma \ref{lem:stacky_vb_independence}, each stratum fits into the following fiber diagram:
\[ \begin{tikzcd}
    &\Quot^{\ell_1, \ldots, \ell_r, \ve, stab}_{\Cps^\circ/\Mps}(\calO^{\oplus r}) \arrow[d] \arrow[r] &\Quot^{\ell_1, \ldots, \ell_r, \ve}_{\Cps^\circ/\Mps}(\calO^{\oplus r}) \arrow[d]\\
    &\frac{\Mbar_{g, n|\ve^k}^{nrt}(\Gr(r, N), d-k)}{\bbS_{\ell_1} \times \cdots \times \bbS_{\ell_r}} \arrow[r] & \frac{\mathrm{Conf}_{\ve}^{k} (\Cps^{nrt, \circ}_{g, n}(\Gr(r, N), d-k)/\Mps^{nrt}_{g, n}(\Gr(r, N), d-k) )}{\bbS_{\ell_1} \times \cdots \times \bbS_{\ell_r}}
\end{tikzcd}. \]
Since the right vertical arrow is an iterated vector bundle, so is the left vertical arrow. Summing over all tuples $\ell_1, \ldots, \ell_r$ and reindexing as in Proposition \ref{prop:relquot}, we obtain
\[[\Quot^{k, stab}_{\Cps^\circ/\Mps}(\calO^{\oplus r})] = \sum_{\substack{{k_1 + \cdots +k_r = k}\\{k_i \geq 0\, \forall\,i}}} \bbL^{\sum_{i = 1}^r (i-1)k_i} \left[ \frac{\Mbar_{g, n|\ve^k}^{nrt}(\Gr(r, N), d-k)}{\bbS_{k_1} \times \cdots \times \bbS_{k_r}}  \right].  \]
Now the proof of the equality in the Grothendieck ring is complete by combining Lemma \ref{lem:quot_strata_stacky} and Lemma \ref{lem:stacky_vb_independence}. The statement about $\bbS_n$-equivariant Serre characteristics follows from the discussion in the proof of Corollary \ref{cor:Qstrat_over_smooth}.
\end{proof}

\begin{rem}
    The presence of rational bridges in quasimaps is analogous to the destabilization (also known as expanded degeneration) encountered in (relative) logarithmic Quot schemes of curves \cite{logquot}. One expects the same shape of the formula above expresses the motive of logarithmic Quot schemes in terms of their logarithmic Hilbert schemes and powers of $\bbL.$
\end{rem}

\subsection{Wall-crossing for the no-rational-tails loci} \label{subsec:wcnrt}Specializing to $\bbS_n$-equivariant Serre characteristics, we now develop the generating function form of Theorem \ref{thm:relquot-to-weightedstablemaps}.

\begin{notn}
    Parallel to the notation $\overline{\cat{M}}_{g,r,N}$ from Definition \ref{defn:catMbar}, set \[\overline{\cat{Q}}_{g,r, N}^{nrt, \ve} = \sum_{n, d} \cat{e}^{\bbS_n}(\Qbar_{g, n}^{\ve,nrt}(\Gr(r, N), d))\quad\mbox{and}\quad \overline{\cat{Q}}_{g,r, N}^{nrt, 0} = \sum_{n, d} \cat{e}^{\bbS_n}(\Qbar_{g, n}^{nrt}(\Gr(r, N), d)).\]
\end{notn}

\begin{cor}\label{cor:complicated_wall_crossing}
    For any genus $g \geq 0$ and $\ve > 0$, we have
    \[ \overline{\cat{Q}}_{g,r, N}^{nrt, \ve} = \Delta^{r-1}_2 (\Delta \overline{\cat{M}}_{g,r, N}^{nrt} \circ_2 \Exp_{\leq M_{\ve}}(p_1))|_{\substack{p_j^{(1)}\mapsto p_j \,\forall\,j\\p_j^{(k)} \mapsto (\bbL^{k-2}q)^j\, \forall j>0,k>1}} \]
    and
    \[ \overline{\cat{Q}}_{g,r, N}^{nrt, 0} = \Delta^{r-1}_2 (\Delta \overline{\cat{M}}_{g,r, N}^{nrt} \circ_2 \Exp(p_1))|_{\substack{p_j^{(1)}\mapsto p_j \,\forall\,j\\p_j^{(k)} \mapsto (\bbL^{k-2}q)^j\, \forall j>0,k>1}} \]
\end{cor}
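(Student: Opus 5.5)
The plan is to sum the equivariant form of Theorem \ref{thm:relquot-to-weightedstablemaps} over $k$ and $d$ and translate each term into symmetric functions. Since $\Qbar^{\ve,nrt}_{g,n}(\Gr(r,N),d)=\coprod_{k=0}^{d}\Qbar^{\ve,nrt,k}_{g,n}(\Gr(r,N),d)$ is a locally closed stratification by torsion length, we get
\[\cat{e}^{\bbS_n}(\Qbar^{\ve,nrt}_{g,n}(\Gr(r,N),d))=\sum_{k}\sum_{k_1+\cdots+k_r=k}\bbL^{\sum_i(i-1)k_i}\,\cat{e}^{\bbS_n}\!\left(\frac{\Mbar^{nrt}_{g,n|\ve^k}(\Gr(r,N),d-k)}{\bbS_{k_1}\times\cdots\times\bbS_{k_r}}\right).\]
The first step is to express each summand through the $\bbS_n\times\bbS_k$-equivariant Serre characteristic of $\Mbar^{nrt}_{g,n|\ve^k}(\Gr(r,N),d-k)$. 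That characteristic is the bidegree $(n,k)$, $q^{d-k}$ part of $\overline{\cat{M}}^{nrt,\ve}_{g,r,N}$. By Corollary \ref{cor:serre_char_collisions}, this generating function equals $\Delta\overline{\cat{M}}^{nrt}_{g,r,N}\circ_2\Exp_{\le M_\ve}(p_1)$, or $\Delta\overline{\cat{M}}^{nrt}_{g,r,N}\circ_2\Exp(p_1)$ in the $\ve\to0$ case.

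The second step handles the quotients. The rational cohomology of the quotient by $\bbS_{k_1}\times\cdots\times\bbS_{k_r}$ is the invariant part. So we restrict the $\bbS_k$-action to the Young subgroup $\bbS_{k_1}\times\cdots\times\bbS_{k_r}$, which on Frobenius characteristics is exactly $\Delta_2^{r}$ of Definition \ref{defn:Deltas}. The paper's superscript $r-1$ presumably indexes the factors so that the second tensor slot is labelled $(2),\dots,(r+1)$; I will match the indexing so that slot $(i+1)$ corresponds to $k_i$. Then we take invariants in the last $r$ slots via (\ref{eqn:multisymmetric_invariants}), which substitutes $p_j^{(i+1)}\mapsto q^j$ and records $q^{k_i}$.

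The weight $\bbL^{(i-1)k_i}$ is incorporated by replacing $q$ with $\bbL^{i-1}q$ in slot $i+1$, giving the rule $p_j^{(m)}\mapsto(\bbL^{m-2}q)^j$ for $m>1$. This uses $(\bbL^{i-1}q)^j$ with $\bbL^{i-1}$ a genuine power, so that a homogeneous degree-$k_i$ term picks up $\bbL^{(i-1)k_i}$. The total power of $q$ is then $(d-k)+\sum_ik_i=d$, which matches the quasimap degree, and the first slot $p^{(1)}_j\mapsto p_j$ retains the $\bbS_n$-action.

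The one point needing care, and the main obstacle, is the passage from the quotient stratum to $\bbS_n$-invariants compatibly with the equivariant Serre characteristic. One has to justify that $\cat{e}^{\bbS_n}(X/G)$ is the $G$-invariant part of $\cat{e}^{\bbS_n\times G}(X)$ for a finite group $G$ commuting with $\bbS_n$, which follows from $H^\bullet_c(X/G;\QQ)=H^\bullet_c(X;\QQ)^G$. One also has to check that the Adams operations in the substitution treat $\bbL$ correctly: $\psi_j(\bbL)=\bbL^j$, so plethystic substitution of $\bbL^{m-2}q$ gives $(\bbL^{m-2}q)^j$. Summing over $n$, $d$, $k$ and the compositions $(k_1,\dots,k_r)$ then yields both displayed identities, the second by the same argument with $\Exp$ in place of $\Exp_{\le M_\ve}$ and $\ve<1/k$ chosen at each degree.
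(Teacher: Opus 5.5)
Your proposal is correct and is essentially the paper's proof. Both go through the same steps: Corollary \ref{cor:serre_char_collisions} for the weighted stable maps; $\Delta_2$ to restrict $\bbS_k$ to the Young subgroups $\bbS_{k_1}\times\cdots\times\bbS_{k_r}$; the invariant formula (\ref{eqn:multisymmetric_invariants}) with the twisted substitution $p_j^{(i+1)}\mapsto(\bbL^{i-1}q)^j$, which yields $\bbL^{(i-1)k_i}q^{k_i}$ by homogeneity; and finally Theorem \ref{thm:relquot-to-weightedstablemaps} summed over the torsion length $k$. Your reading of the indexing is also right: under Definition \ref{defn:Deltas} one needs $\Delta_2^{r}$ to produce the slots $(2),\dots,(r+1)$ used by the substitution and the sum over $k_1+\cdots+k_r=k$, so the superscript $r-1$ in the statement is an off-by-one slip.
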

\begin{proof}
First, by (\ref{eqn:delta_2}) and Corollary \ref{cor:serre_char_collisions}, we have that
\[\Delta^{r-1}_2 (\Delta \overline{\cat{M}}_{g,r, N}^{nrt} \circ_2 \Exp_{\leq M_{\ve}}(p_1)) = \sum_{n,k,d \geq 0}\sum_{\substack{k_1+ \cdots + k_{r} = k\\k_i \geq 0\,\forall\,i}} \cat{e}^{\bbS_n \times \bbS_{k_1} \times \cdots \times \bbS_{k_{r}}}(\Mbar_{g, n|\ve^{k}}^{nrt}(\Gr(r, N), d))q^d. \]
Now we have
\begin{align*}\cat{e}^{\bbS_n \times \bbS_{k_1} \times \cdots \times \bbS_{k_{r}}}(\Mbar_{g, n|\ve^{k}}^{nrt}(\Gr(r, N), d))&|_{\substack{p_j^{(1)} \mapsto p_j \forall j \geq 1\\p_j^{(k)} \mapsto (\bbL^{k-2}q)^j\,\forall j\geq1, k > 1}} \\&= \cat{e}^{\bbS_n}\left(\frac{\Mbar_{g, n|\ve^{k}}^{nrt}(\Gr(r, N), d)}{\bbS_{k_1} \times \cdots \times\bbS_{k_r}}\right) q^{k} \cdot \prod_{i= 1}^{r}\bbL^{(i - 1) k_i}.
\end{align*}
by (\ref{eqn:multisymmetric_invariants}). The proof is complete by Theorem \ref{thm:relquot-to-weightedstablemaps}.
\end{proof}

The composition of the coproduct and substitutions above can be succintly expressed as the following operators from the introduction.

\begin{defn}
    Let \(\boldsymbol{B}_{r, \ve} : K_0(\cat{MHS})\otimes \Lambda \to K_0(\cat{MHS})\otimes \Lambda \) be the $\QQ$-algebra automorphism determined by \[\boldsymbol{B}_{r, \ve}: p_j \mapsto p_j + \sum_{k = 1}^{M_{\ve}} \qbinomopt[\bbL^j]{r + k - 1}{k} \ q^{jk}.\]  As its $\ve \to 0^+$ limit, we define
\[ \boldsymbol{B}_{r, 0}: p_j \mapsto p_j + \prod_{i = 0}^{r-1} \frac{1}{1 - \left(\bbL^{i}q\right)^j} - 1. \]
\end{defn}

\begin{cor}\label{cor:nrtqvsm}
For any genus $g \geq 0$ and fixed $\ve > 0$, we have
    \[\boldsymbol{B}_{r, \ve}\left( \overline{\cat{M}}_{g,r, N}^{nrt} \right) =  \overline{\cat{Q}}_{g,r, N}^{\ve,nrt}. \]
    In the $\ve \to 0^+$ limit, we have
    \[\boldsymbol{B}_{r, 0}\left( \overline{\cat{M}}_{g,r, N}^{\ve,nrt} \right) = \overline{\cat{Q}}_{g,r, N}^{0,nrt} \]
\end{cor}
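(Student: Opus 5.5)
The plan is to start from Corollary \ref{cor:complicated_wall_crossing} and show that its right-hand side equals $\boldsymbol{B}_{r,\ve}$ applied to $\overline{\cat{M}}^{nrt}_{g,r,N}$. Everything below is algebra on power sums. Since all operations involved ($\Delta$, $\circ_2$ with fixed inner argument, $\Delta_2^{r-1}$, the substitution) are $\QQ$-algebra homomorphisms in the outer variables, it suffices to track what happens to a single generator $p_j$ of $\Lambda$. The coefficients in $K_0(\cat{MHS})[\![q]\!]$ need extra care, because $\circ_2$ acts on them through $\psi_n$; I return to this point at the end.

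\textbf{Step 1: the coproduct and the plethysm.} Applying $\Delta$ gives $p_j \mapsto p_j^{(1)} + p_j^{(2)}$. Then $\circ_2\, \Exp_{\leq M_\ve}(p_1)$ fixes $p_j^{(1)}$ and sends $p_j^{(2)}$ to $\psi_j(\Exp_{\leq M_\ve}(p_1))$, which is $\sum_{k=1}^{M_\ve} \psi_j(h_k)$ written in the second set of variables.

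\textbf{Step 2: the coproduct $\Delta_2^{r-1}$.} This splits the second tensor factor into $r$ factors (indexed $2,\dots,r+1$, matching the substitution in Corollary \ref{cor:complicated_wall_crossing}). Since $\Delta^s$ on $p_i$ is additive, $h_k$ becomes $\sum_{k_1+\cdots+k_r=k} h_{k_1}^{(2)}\cdots h_{k_r}^{(r+1)}$.

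\textbf{Step 3: the substitution.} Substituting $p_i^{(m)} \mapsto (\bbL^{m-2}q)^i$ sends $h_{k_i}^{(i+1)}$ to $(\bbL^{i-1}q)^{k_i}$. The image of $p_j$ is therefore
\[ p_j + \sum_{k=1}^{M_\ve} \sum_{k_1+\cdots+k_r=k} \prod_{i=1}^r (\bbL^{(i-1)j}q^j)^{k_i}. \]
By Lemma \ref{lem:sym_of_Pr} and Corollary \ref{cor:sym_L_binomial} with $\bbL$ replaced by $\bbL^j$, the inner sum is $\qbinomopt[\bbL^j]{r+k-1}{k}\, q^{jk}$, which is exactly $\boldsymbol{B}_{r,\ve}(p_j)$. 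For $\ve \to 0^+$ the sum over $k$ runs to infinity and gives $\prod_{i=0}^{r-1}(1-(\bbL^i q)^j)^{-1} - 1$, i.e.\ $\boldsymbol{B}_{r,0}(p_j)$, using the second formula in Corollary \ref{cor:complicated_wall_crossing}.

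\textbf{Main obstacle: compatibility of the substitution with the $\lambda$-ring structure.} In $\circ_2$, the coefficients of $\Delta\overline{\cat{M}}^{nrt}_{g,r,N}$ (classes in $K_0(\cat{MHS})$ and powers of $q$) are treated as constants, since $\circ_2$ only acts on the second-factor power sums. So the homomorphism property holds with those coefficients fixed, and that part is fine. The subtle point is that the substitution must commute with $\psi_j$ on the inner argument. Concretely, I need $\psi_j(p_i^{(m)}) = p_{ij}^{(m)}$ to map to $\psi_j((\bbL^{m-2}q)^i)$, which requires $\psi_j(\bbL) = \bbL^j$. This holds because $\bbL$ is a line element: it satisfies $\Sym^n \bbL = \bbL^n$, so the recursion in Definition \ref{def:Adams} gives $\psi_j(\bbL)=\bbL^j$.

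Rather than commuting the operations abstractly, I would compose them in the order stated and check the result on a monomial basis $\prod_j p_j^{a_j}$ with constant coefficients. This reduces the whole argument to the generator computation in Steps 1 to 3, and invertibility of $\boldsymbol{B}_{r,\ve}$ is then immediate because it is triangular in the $p_j$.
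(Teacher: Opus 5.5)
Your proposal is correct and follows the paper's route: reduce via Corollary~\ref{cor:complicated_wall_crossing} to the image of a single $p_j$ under the composite homomorphism, then identify the resulting sum with $\qbinomopt[\bbL^j]{r+k-1}{k}\,q^{jk}$. The paper packages that sum as $\psi_j\bigl(h_k\circ(1+\bbL+\cdots+\bbL^{r-1})\bigr)=\psi_j\bigl(\cat{e}(\Sym^k(\P^{r-1}))\bigr)$, whereas you expand $h_k$ through the coproduct and use the $q$-binomial generating-function identity at $t=\bbL^j$. One thing to tidy: in Steps~2--3 the object being split and evaluated is $\psi_j(h_k)$ rather than $h_k$. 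Your final displayed formula already accounts for this, so on your route the ``$\psi_j(\bbL)=\bbL^j$'' point never actually arises; it is needed only in the paper's packaging.
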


\begin{proof}
For any integer $n > 0$, we have
    \begin{align*}
        \Delta^{r-1}_2(\Delta p_n\circ_2 \Exp_{\leq M_{\ve}}(p_1)) &=  \Delta_2^{r-1}(p_n^{(1)} + \left(\Exp_{\leq M_{\ve}}(p_n )\right)^{(2)}) \\&= \Delta^{r-1}_2\left(p_n^{(1)} + \left( \sum_{k = 1}^{M_{\ve}}  h_{k} \circ p_n \right)^{(2)}\right) \\&= p_n^{(1)} +  \sum_{k = 1}^{M_{\ve}}  (h_{k} \circ p_n)|_{p_j \mapsto p_j^{(2)} + \cdots + p_j^{(r+1)}\,\forall \,j}. 
    \end{align*}  
    Therefore, writing $\lambda_i(\sigma)$ for the number of $i$-cycles in a permutation $\sigma$, we find that
    \begin{align*}
        (\Delta^{r-1}_2(\Delta p_n\circ_2 \Exp_{\leq M_{\ve}}(p_1)))&|_{p_{j}^{(k)} \mapsto (\bbL^{k-2}q)^j\,\forall \,k>1,\, p_j^{(1)} \mapsto p_j} \\&= p_n +  \sum_{k = 1}^{M_{\ve}}  (h_{k} \circ p_n)|_{p_j \mapsto q^j(1 + \bbL^{j} + \cdots + \bbL^{j(r-1)})\,\forall \,j}  \\&= p_n + \sum_{k = 1}^{M_{\ve}} \frac{1}{k!} \sum_{\sigma \in \bbS_k} \prod_{i > 0} (q^{ni}(1 + \bbL^{ni} + \cdots +\bbL^{ni(r - 1)}))^{\lambda_i(\sigma)} \\&= p_n + \sum_{k = 1}^{M_{\ve}} q^{nk}\psi_n\left(h_k \circ(1 + \bbL + \cdots + \bbL^{r-1}\right))
        \\&= p_n + \sum_{k = 1}^{M_{\ve}} q^{nk}\psi_n\left(\cat{e}(\Sym^{k}(\P^{r - 1}))\right)
        \\&= p_n + \sum_{k = 1}^{M_{\ve}} q^{nk} \qbinomopt[\bbL^n]{r + k - 1}{k}
    \end{align*}  
    which, together with Corollary \ref{cor:complicated_wall_crossing}, proves the first part of the statement. Now consider the $\ve \to 0^+$ limit, which corresponds to replacing $\Exp_{\leq M_\ve}(p_1)$ with $\Exp(p_1)$ and hence amounts to taking $M_{\ve} \to \infty$. In the limit we obtain the substitution
    \[ p_n \mapsto p_n + \psi_n\left(\sum_{k = 1}^{\infty} \cat{e}(\Sym^{k}(\P^{r-1})) q^{k} \right), \]
    and 
    \[\psi_n\left(\sum_{k = 1}^{\infty} \cat{e}(\Sym^{k}(\P^{r-1})) q^{k} \right) = \prod_{i = 0}^{r - 1}\frac{1}{1 - (\bbL^i q)^n} - 1, \]
    by Lemma \ref{lem:sym_of_Pr}. Combining this observation with Corollary \ref{cor:complicated_wall_crossing} finishes the proof.
\end{proof}

\begin{rem}\label{rem:punctualquot}
    The motive of $\Sym^{k}(\P^{r-1})$ appearing in the above formulas agrees with that of the length-$k$ punctual Quot scheme of a rank-$r$ locally free sheaf on a smooth curve by work of Ricolfi \cite[Remark 3.4]{Ric20}. Therefore, the operators $\boldsymbol{B}_{r,\ve}$ and $\boldsymbol{B}_{r,0}$ can be interpreted as substituting markings (with their permutation action) with punctual Quot schemes of $ev^*\calS$ in a precise way. It would be interesting to see whether the formulas in this work interact with the plethystic exponential formulas \cite[Theorem A]{Ric20} of his work.
\end{rem}

\section{Genus-zero maps and quasimaps}\label{sec:g0maps}
In this section, we deduce Theorem \ref{thm:main} from Corollary \ref{cor:nrtqvsm} using plethystic formulas relating the equivariant Serre characteristics of the graded $\bbS$-spaces $\Qbar^{\ve}_{g,r,N}$ and $\Qbar^{\ve, nrt}_{g,r,N},$ as well as the graded $\bbS$-spaces $\Mbar_{g,r,N}$ and $\Mbar_{g,r,N}.$ The formalism of plethysms of graded $\bbS$-modules has been defined in §\ref{subsec:plethysmgraded}, in particular Definition \ref{defn:plethysmgraded}.

The plethystic formula comes from the combinatorial operation of attaching rooted rational tails to curves with no rational tails to recover all points in $\Mbar_{g,n}(\Gr(r,N),d)$ and $\Qbar^{\ve}_{g,n}(\Gr(r,N),d)$).

\subsection{Moduli spaces of pointed maps}

We begin by introducing the moduli space corresponding to the rooted rational tails. Recall that due to the transitive $\mathrm{GL}_N$-action on $\Gr(r,N),$ the evaluation maps $ev_{n+1}: \Mbar_{g,n+1}(\Gr(r,N),d)\to \Gr(r,N)$ and $ev_{n+1}: \Qbar^{\ve}_{g,n+1}(\Gr(r,N),d)\to \Gr(r,N),$ which are $\mathrm{GL}_r$-equivariant, are Zariski locally trivial fibrations. The same holds for the evaluation maps $ev_{n+1}: \M_{g,n+1}(\Gr(r,N),d)\to \Gr(r,N)$ and $ev_{n+1}: \Q^{\ve}_{g,n+1}(\Gr(r,N),d)\to \Gr(r,N).$

\begin{defn}
    Let $\Box$ denote any one of $\Mbar, \Qbar^{\ve}, \M,$ or $\Q^{\ve}.$ We define \[\Box^\star_{g,n}(\Gr(r,N),d)\subset \Box_{g,n+1}(\Gr(r,N),d)\] as a fiber of $ev_{n+1}: \Box_{g,n+1}(\Gr(r,N),d)\to \Gr(r,N).$

    Let $\Box_{g,r,N}^\star$ be the graded $\bbS$-space defined by $\Box_{g,r,N}^\star(n,d) :=\Box_{g,n}^\star(\Gr(r,N),d).$ Recall from the introduction that their equivariant Serre characteristics are denoted as $\overline{\cat{M}}^\star_{g,r,N},$ $\overline{\cat{Q}}^{\ve,\star}_{g,r,N},$ $\cat{M}^\star_{g,r,N},$ and $\cat{Q}^{\ve,\star}_{g,r,N}$ respectively.
\end{defn}

The Serre characteristics of the pointed moduli spaces are related to those of $\Box_{g,r,N}$ by the following formula.

\begin{lem}\label{lem:MQstar}
    We have \[\mathsf{e}^{\bbS}(\Box_{g,r,N}^{\star}) = \frac{1}{\mathsf{e}(\Gr(r,N))}\cdot \frac{\partial}{\partial p_1}\mathsf{e}^{\bbS}(\Box_{g,r,N}).\]
\end{lem}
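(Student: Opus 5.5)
Two facts combine here. First, $\partial/\partial p_1$ on Frobenius characteristics corresponds to restricting an $\bbS_{n+1}$-representation to $\bbS_n$. Second, $ev_{n+1}$ is Zariski locally trivial, so its Serre characteristic factors as fiber times base.

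\textbf{Step 1: differentiation is restriction.} For an $\bbS_{n+1}$-equivariant mixed Hodge structure $M$,
\[ \frac{\partial}{\partial p_1}\ch_{n+1}(M) = \ch_n\bigl(\Res^{\bbS_{n+1}}_{\bbS_n} M\bigr). \]
This follows from the Frobenius formula. We have $\partial/\partial p_1$ of $\frac{1}{(n+1)!}\sum_\sigma \mathrm{Tr}(\sigma)\prod p_i^{\lambda_i(\sigma)}$, and each term has a factor $\lambda_1(\sigma)$. Every $\sigma$ with a fixed point is conjugate to a permutation fixing $n+1$. The weighted count $\lambda_1(\sigma)/(n+1)!$ over conjugates exactly matches $1/n!$ times the sum over $\tau\in\bbS_n\subset\bbS_{n+1}$, with $p_1^{\lambda_1(\tau)}$ in place of $p_1^{\lambda_1(\tau)+1}$. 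Concretely, counting pairs $(\sigma,j)$ with $\sigma(j)=j$ and conjugating $j$ to $n+1$ gives the identity. This holds with $K_0(\cat{MHS})$ coefficients by linearity. Applied with $M=H^\bullet_c(\Box_{g,n+1}(\Gr(r,N),d))$, it shows
\[ \frac{\partial}{\partial p_1}\cat{e}^{\bbS}(\Box_{g,r,N}) = \sum_{n,d}\cat{e}^{\bbS_n}\bigl(\Box_{g,n+1}(\Gr(r,N),d)\bigr)q^d, \]
where the right side carries the restricted $\bbS_n$-action. The $q$-grading is untouched.

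\textbf{Step 2: equivariant fibration.} The map $ev_{n+1}\colon\Box_{g,n+1}(\Gr(r,N),d)\to\Gr(r,N)$ is $\bbS_n$-equivariant, with $\bbS_n$ acting trivially on the target. It is Zariski locally trivial with fiber $\Box^\star_{g,n}(\Gr(r,N),d)$. We stratify $\Gr(r,N)$ by Schubert cells $\bbA^m$ on which the fibration trivializes. Rather than invoking the paper's assertion of local triviality, we can get this directly: the $\GL_N$-translation gives an $\bbS_n$-equivariant isomorphism
\[ ev_{n+1}^{-1}(U)\cong U\times \Box^\star_{g,n}(\Gr(r,N),d) \]
over a unipotent-orbit cell $U$. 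The isomorphism uses a section $U\to \GL_N$ and commutes with permuting the first $n$ markings. The Künneth formula for compactly supported cohomology is $\bbS_n$-equivariant. Hence
\[ \cat{e}^{\bbS_n}(ev_{n+1}^{-1}(U)) = \cat{e}(U)\,\cat{e}^{\bbS_n}(\Box^\star_{g,n}). \]
Summing over cells by additivity of equivariant Serre characteristics gives $\cat{e}(\Gr(r,N))\,\cat{e}^{\bbS_n}(\Box^\star_{g,n})$.

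\textbf{Step 3: conclusion.} Combining the two steps and dividing by $\cat{e}(\Gr(r,N))=\qbinomopt[\bbL]{N}{r}$ gives the lemma. This element is invertible after tensoring with $\QQ$, or one can simply read the lemma as the multiplied-through identity.

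\textbf{Main obstacle.} The main obstacle is the stacky, equivariant trivialization in Step 2. The moduli spaces are Deligne--Mumford stacks, so one must make sure the cut-and-paste and Künneth arguments apply to their coarse spaces or to rational cohomology. Rationally, these agree, so this is fine.
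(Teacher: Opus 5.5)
Your proof is correct and takes the paper's route. The paper's one-line proof is the $\bbS_n$-equivariant Zariski locally trivial fibration $\Res^{\bbS_{n+1}}_{\bbS_n}\Box_{g,n+1}(\Gr(r,N),d)\to\Gr(r,N)$ with fiber $\Box^\star_{g,n}(\Gr(r,N),d)$, together with the implicit identification of $\partial/\partial p_1$ with restriction from $\bbS_{n+1}$ to $\bbS_n$. Your Frobenius-formula computation and your unipotent-section trivialization over Schubert cells fill in exactly these two steps.

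One side remark in Step 3 is wrong. $\cat{e}(\Gr(r,N))$ is not a unit in $K_0(\cat{MHS})\otimes\QQ$: already $1+\bbL$ maps to the non-unit $1+uv$ under the $E$-polynomial. So the lemma should be read as your multiplied-through identity. That identity determines $\cat{e}^{\bbS}(\Box^\star_{g,r,N})$ uniquely because $\qbinomopt[\bbL]{N}{r}$ is a non-zero-divisor, as one sees by comparing top weights.
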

\begin{proof}
    This follows from an $\bbS_n$-equivariant Zariski locally trivial fibration \[\mathrm{Res}^{\bbS_{n+1}}_{\bbS_{n}}\left(\Box_{g,n+1}(\Gr(r,N),d)\right)\to \Gr(r,N),\] where $\Gr(r,N)$ carries trivial $\bbS_n$-action, and the fiber is $\Box_{g,n}^\star(\Gr(r,N),d).$
\end{proof}

\subsection{Plethysm and rational tails}

The graded $\bbS$-spaces $\Mbar_{0,r,N}^{\star}$ resp. $\Qbar_{0,r,N}^{\ve, \star}$ account for the rational tail contribution to $\Mbar_{g,r,N}$ resp. $\Qbar_{g,r,N}^{\ve}.$ Extending the discussion on $ev_{n+1},$ the forgetful map that restricts a stable map to the complement of the rational tails is, up to a finite group quotient, a Zariski locally trivial fibration with fiber as strata in $\Mbar_{0,r,N}^\star$ resp. $\Qbar_{0,r,N}^\star.$ This leads to the following formulas, the proof of which is entirely parallel to §3.4 of \cite{ks-genus1}.

\begin{lem}\label{lem:nrtplethysm}
   Let $\Box$ denote either $\Mbar$ or $\Qbar^{\ve}.$ For any genus $g \geq 1$ and $\ve \geq 0$, we have \begin{align*}
        \mathsf{e}^{\bbS}(\Box_{g,r,N}) & = \mathsf{e}^{\bbS}(\Box^{nrt}_{g,r,N})\circ(p_1 + \mathsf{e}^{\bbS}(\Box_{0,r,N}^\star))\\ & = \mathsf{e}^{\bbS}(\Box^{nrt}_{g,r,N})\circ\left(p_1 + \frac{1}{\mathsf{e}(\Gr(r,N))}\frac{\partial}{\partial p_1}\mathsf{e}^{\bbS}(\Box_{0,r,N})\right).
    \end{align*}
\end{lem}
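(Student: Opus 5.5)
The plan is to build an explicit stratification of $\Box_{g,n}(\Gr(r,N),d)$ by the ``core'' obtained by removing rational tails, and then to read the stratification as a composition of graded $\bbS$-spaces; applying the Serre characteristic $\cat{e}^{\bbS}$ turns this composition into plethysm. The second equality of the statement is then immediate from Lemma~\ref{lem:MQstar} applied in genus zero.

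\textbf{Step 1: the core of a point.} Since $g \geq 1$, every point of $\Box_{g,n}(\Gr(r,N),d)$ has a domain curve $C$ with a unique minimal connected subcurve $C'$ of genus $g$ carrying no rational tails. Each connected component of $\overline{C\smallsetminus C'}$ is a tree of rational curves meeting $C'$ at a single node $x$. I will replace each such tree by a single new marking at $x$.

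\emph{Stability of the core.} Stability for stable maps, or $\ve$-stability for quasimaps, is checked component by component. The components of $C'$ keep their special-point counts, with the new markings replacing the nodes, and their degrees are unchanged. For quasimaps the torsion lengths on $C'$ are also unchanged, and $\calQ$ is locally free at $x$ because $x$ was a node. So the core lies in $\Box^{nrt}_{g,m}(\Gr(r,N),e)$, where $m$ is the number of original markings on $C'$ plus the number of attached trees.

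\emph{Stability of a tree.} Each tree $T$, together with its attaching point as an extra marking and its own markings, is a genus-zero object. Its attaching point maps to $f(x)$ for maps; for quasimaps $\calQ$ is locally free at $x$, so $x$ still evaluates to a point of $\Gr(r,N)$. Hence $T$ is a point of the fiber $\Box^\star_{0,n_T}(\Gr(r,N),d_T)$ over $f(x)$. If a tree carries no markings and no degree, it is unstable, so every tree has $(n_T,d_T)\neq(0,0)$. This is consistent with $\Box_{0,r,N}^\star$ having no $(0,0)$ term, which is exactly what the plethysm in the statement requires.

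\textbf{Step 2: matching with composition.} For each core with $k$ of its markings singled out (those to which trees are attached), the set of choices of trees is a product of fibers of $ev$ over the points $f(x_i)$. After a fixed choice of identification, each fiber is isomorphic to $\Box^\star_{0}$. Unlabeled markings are handled by the $\bbS_k$-quotient and the induced representations of the box product. This produces a bijective morphism, stratum by stratum, from
\[
\Box^{nrt}_{g,r,N}\circ\bigl(\sigma_1\sqcup\Box^\star_{0,r,N}\bigr)
\]
to $\Box_{g,r,N}$. Here $\sigma_1$ accounts for the markings of the core that carry no tree, whose Serre characteristic is $p_1$. This is the argument of \cite[\S3.4]{ks-genus1}, and also the pattern of Lemma~\ref{lem:Grothendieck_ring_collisions}.

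\textbf{Main obstacle: the fibration over $\Gr(r,N)$.} The fibration $\Box_{g,n}\to$ (cores with tree data) is a fibration over the evaluation points, not a product. To get a product up to the symmetric quotient, I will use that $\mathrm{GL}_N$ acts transitively on $\Gr(r,N)$ and that $ev_{n+1}$ is Zariski locally trivial, as recalled before Lemma~\ref{lem:MQstar}. I will choose local sections of $\mathrm{GL}_N\to\Gr(r,N)$ on a Zariski cover; these trivialize the family of fibers $\Box_0^\star$ over $(\Gr(r,N))^k$, compatibly with the $\bbS_k$-action. The local triviality is then in the Zariski topology, and classes in $K_0$ (and the $\bbS_n$-equivariant Serre characteristics, via the same Thom/Leray-type multiplicativity as in Corollary~\ref{cor:Qstrat_over_smooth}) multiply. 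I expect the equivariance under $\bbS_k$ in this trivialization to need the most care. If it fails, the fallback is to work with equivariant mixed Hodge structures, where multiplicativity of Serre characteristics for locally trivial fibrations with trivial monodromy suffices; trivial monodromy holds because $\mathrm{GL}_N$ is connected.

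Applying $\cat{e}^{\bbS}$ and using $\cat{e}^{\bbS}(\calX\circ\calY)=\cat{e}^{\bbS}(\calX)\circ\cat{e}^{\bbS}(\calY)$ then gives the first equality.
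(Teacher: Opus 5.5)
Your proposal is correct and follows essentially the same route as the paper: the paper's sketch likewise stratifies by the core left after removing rational tails. It treats the forgetful map to the core, up to the finite symmetric-group quotient, as a Zariski locally trivial fibration whose fibers are products of the genus-zero pointed spaces $\Box^\star_{0}$ (via the transitive $\mathrm{GL}_N$-action), defers the resulting plethysm to \S3.4 of \cite{ks-genus1}, and gets the second equality from Lemma \ref{lem:MQstar}. The $\bbS_k$-equivariance you flag is harmless: fix a single piecewise-regular section of $\mathrm{GL}_N\to\Gr(r,N)$ (e.g.\ over Schubert cells) and apply it coordinatewise, which gives an $\bbS_k\times\bbS_n$-equivariant piecewise trivialization.
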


Note that the condition $g \geq 1$ is crucial to Lemma \ref{lem:nrtplethysm}, because it relies on the dual graph of every source curve to have a well-defined \textit{core}, which is the maximal subgraph of genus $g$; see the proof of \cite[Proposition 3.9]{ks-genus1}. The genus-zero case is discussed in more detail below in Remark \ref{rem:genus_zero}. To translate the comparison formula between the no-rational-tail moduli spaces (Theorem \ref{thm:relquot-to-weightedstablemaps}) to one between $\Qbar^{\ve}_{g,r,N}$ and $\Mbar_{g,r,N},$ we use the following plethystic inversion formula. It is a straightforward consequence of the previous work of Getzler--Pandharipande on $\Mbar_{0,n}(\P^r,d)$ \cite{GetzlerPandharipande}.

\begin{lem} \label{lem:g0plethysm}We have
    \[(p_1+\overline{\cat{M}}_{0,r,N}^\star)\circ (p_1 - \cat{M}_{0,r,N}^\star) = (p_1+\overline{\cat{M}}_{0,r,N}^\star)\circ (p_1 - \cat{M}_{0,r,N}^\star) = p_1\]as well as the parallel formula regarding $\overline{\cat{Q}}^{\ve, \star}_{0, r,N}$ and $\cat{Q}^{\ve,\star}_{0,r,N}.$
\end{lem}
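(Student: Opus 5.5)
The plan is to realize the identity as a statement about genus-zero stratifications by "core" data, following Getzler--Pandharipande \cite{GetzlerPandharipande}. The statement has a typo: one of the two displayed compositions should be in the reversed order, $(p_1 - \cat{M}_{0,r,N}^\star)\circ(p_1+\overline{\cat{M}}_{0,r,N}^\star) = p_1$. Since plethysm is associative and $p_1$ is a two-sided unit, and elements of the form $p_1 + (\text{higher order})$ have unique two-sided plethystic inverses, it suffices to prove one order. I will prove
\[
p_1 + \overline{\cat{M}}_{0,r,N}^\star \;=\; (p_1 + \cat{M}_{0,r,N}^\star)\circ(p_1+\overline{\cat{M}}_{0,r,N}^\star),
\]
and then deduce the inverse relation algebraically. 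Note that $\overline{\cat{M}}^\star_{0,r,N}$ has no $(0,0)$ or $(1,0)$ term, because an unmarked stable map with only the root in degree $0$ is unstable. So the leading term of every element involved is $p_1$, and inversion is legitimate degree by degree in the bigrading.

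The geometric step is as follows. Take a pointed stable map in $\Mbar^\star_{0,n}(\Gr(r,N),d)$, that is, a stable map with an extra marking $n+1$ whose image is a fixed point $x\in\Gr(r,N)$. Let $C_0$ be the irreducible component containing the root marking. Cutting at the nodes of $C_0$ exhibits the map as the following data:
\begin{itemize}
\item a smooth-domain pointed map on $C_0$, whose special points are legs;
\item at each leg, either an actual marking (a copy of $p_1$) or a rooted genus-zero stable map, rooted at the image of the node.
\end{itemize}
The rooted pieces are again pointed maps, but their root lands at a varying point of $\Gr(r,N)$. The $\GL_N$-homogeneity used in Lemma \ref{lem:MQstar} makes each evaluation map a Zariski locally trivial fibration. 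So stratum by stratum, the space of such configurations is a fibration over the smooth core with fibers products of $\Mbar^\star_{0}$'s, quotiented by the symmetric group permuting the subtrees.

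I then want to show that this fibered product has the same Serre characteristic as the untwisted product, equivariantly. This is the crux. It gives exactly the composition formula (\ref{eqn:comp}): $\cat{M}^\star_{0,r,N}$ is composed with $p_1+\overline{\cat{M}}^\star_{0,r,N}$. The $p_1$ term accounts for the trivial map, that is, for "no tail at this leg".

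Two checks remain on the geometric side. First, stability is compatible with the decomposition: the core with its legs is stable exactly when the original map is, since legs count as special points. Second, the degree and marking gradings add correctly.

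The main obstacle is the twisting by the evaluation maps. Over a core map, the fiber of the tail at a leg is $ev^{-1}(f(\text{node}))$, which varies with the core. To handle this I would use Zariski local triviality of $ev$ coming from the transitive $\GL_N$-action. Over a stratum where the core map $f$ is fixed up to the $\GL_N$-translation, the tail fibers are isomorphic. The equivariant version needs care because of the $\bbS_k$-quotient permuting legs. For this I would mimic Getzler--Pandharipande's proof for $\P^r$ and Bagnarol's extension: pass to the $\GL_N$-torsor or use that $\GL_N$ is special, so that $\GL_N$-equivariant locally trivial fibrations have multiplicative classes even after finite quotients.

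The quasimap case is verbatim. It uses the same core decomposition together with the same homogeneity, which holds because $ev$ is $\GL_N$-equivariant on $\Qbar^\ve$. The one extra point is that torsion never sits at nodes or markings, so the stability conditions again split leg by leg.
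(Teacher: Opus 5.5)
Your strategy is the paper's: decompose a pointed genus-zero map by the component carrying the root, as in Getzler--Pandharipande, and then invert formally. The paper likewise invokes their argument leading up to Corollary 4.6 verbatim, including the twisting by evaluation maps that you flag as the crux. Your observation that one of the two displayed compositions should be in the reversed order is also correct.

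However, the identity you set out to prove is false as written. Put $G = p_1+\overline{\cat{M}}^\star_{0,r,N}$. Since $f\mapsto f\circ G$ is linear and $p_1\circ G = G$, we have
\[
(p_1+\cat{M}^\star_{0,r,N})\circ G = G + \cat{M}^\star_{0,r,N}\circ G .
\]
So your claimed identity $G = (p_1+\cat{M}^\star_{0,r,N})\circ G$ says $\cat{M}^\star_{0,r,N}\circ G = 0$. Composing with the plethystic inverse of $G$ then gives $\cat{M}^\star_{0,r,N}=0$. Concretely, in bidegree $(2,0)$ the left side is $h_2$, coming from $\Mbar^\star_{0,2}(\Gr(r,N),0)=\mathrm{pt}$, while the right side is $2h_2$. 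No algebraic manipulation of this identity can produce the minus sign in $p_1-\cat{M}^\star_{0,r,N}$, so the ``deduce the inverse relation algebraically'' step cannot go through.

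Your geometric decomposition, read correctly, gives a different identity. You describe a smooth-domain pointed map on the root component, with each leg filled by either a marking ($p_1$) or a rooted tail ($\overline{\cat{M}}^\star_{0,r,N}$). That is exactly
\[
\overline{\cat{M}}^\star_{0,r,N} = \cat{M}^\star_{0,r,N}\circ(p_1+\overline{\cat{M}}^\star_{0,r,N}),
\]
with no $p_1$ added in front of $\cat{M}^\star_{0,r,N}$. This is the identity the paper uses. From it the lemma takes one line: $(p_1-\cat{M}^\star_{0,r,N})\circ G = G - \cat{M}^\star_{0,r,N}\circ G = p_1$. The reverse order then follows from your two-sided-inverse remark, which is also how the paper finishes. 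With this correction your argument coincides with the paper's, and the quasimap case goes through the same way.
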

\begin{proof}
    We observe that both $\Mbar_{0,n}(\Gr(r,N),d)$ and $\Qbar^{\ve}_{0,n}(\Gr(r,N),d)$ are recursively stratified by decorated trees in the same way as $\Mbar_{0,n}(\P^r,d),$ with $\Gr(r,N)$ standing in for $\P^r.$ Therefore, the discussion leading up to Corollary 4.6 of \cite{GetzlerPandharipande} applies verbatim to show that \[\overline{\cat{M}}_{0,r,N}^\star = \cat{M}_{0,r,N}^\star\circ(p_1+\overline{\cat{M}}_{0,r,N}^\star),\] and similar for the pair $(\Qbar^{\ve, \star}_{0, r,N},\Q^{\ve,\star}_{0,r,N}).$ From this, we compute \begin{align*}
        (p_1 - \cat{M}_{0,r,N}^\star)\circ (p_1+\overline{\cat{M}}_{0,r,N}^\star) & = \left(p_1 + \overline{\cat{M}}_{0,r,N}^\star\right) - \cat{M}_{0,r,N}^\star\circ (p_1+\overline{\cat{M}}_{0,r,N}^\star)\\ & = p_1 + \overline{\cat{M}}_{0,r,N}^\star - \overline{\cat{M}}_{0,r,N}^\star=p_1.
    \end{align*}

    This implies the other inversion formula for general reasons, which we now explain. The endomorphisms $\Phi_{\M},\Phi_{\Mbar}: K_0(\mathsf{MHS})\otimes \Lambda[\![q]\!]\to K_0(\mathsf{MHS})\otimes \Lambda[\![q]\!]$ defined by \[\Phi_{\M}(g):=(p_1 - \cat{M}_{0,r,N}^\star)\circ g\text{ and } \Phi_{\Mbar}(g):=(p_1 + \overline{\cat{M}}_{0,r,N}^\star)\circ g\] satisfy $(\Phi_{\M}\circ \Phi_{\Mbar})(g) = p_1\circ g = g,$ so $ \Phi_{\M}\circ \Phi_{\Mbar}= \mathrm{id}_{K_0(\mathsf{MHS})\otimes \Lambda[\![q]\!]}.$ From this, we know that $\Phi_{\M}$ is the two-sided inverse to $\Phi_{\Mbar},$ namely $\Phi_{\Mbar}\circ \Phi_{\M}= \mathrm{id}.$ Evaluating this equality on $p_1,$ we get $(p_1 + \overline{\cat{M}}_{0,r,N}^\star)\circ (p_1 - \cat{M}_{0,r,N}^\star) = p_1$ as desired.
\end{proof}
\begin{rem}
    So far, the formulas in this section hold in the Grothendieck ring of graded $\bbS$-varieties $K_0(\mathsf{Var}, \bbS)[\![q]\!],$ where the operator $\frac{\partial}{\partial p_1}$ is replaced by the operator $D: K_0(\mathsf{Var}, \bbS)[\![q]\!]\to K_0(\mathsf{Var}, \bbS)[\![q]\!]$ defined by $(D\mathcal{X})(n,d):=(\Res^{\bbS_{n+1}}_{\bbS_n}\mathcal{X}(n+1,d)).$
\end{rem}

We now finish the proof of the main comparison formula. See (\ref{eqn:Bre}) and (\ref{eqn:Br0}) for the definition of the algebra automorphisms $\boldsymbol{B}_{r, \ve}$ and $\boldsymbol{B}_{r, 0}.$
\begin{thm}[Theorem \ref{thm:main}]
    For any $\ve > 0$ and $\ve = 0^+$, we have
\[\boldsymbol{B}_{r, \ve}\left[ \overline{\cat{M}}_{g, r, N} \circ \left(p_1 - \frac{\partial}{\partial p_1}\frac{\cat{M}_{0, r, N}}{\cat{e}(\Gr(r, N))}\right)\right] = \overline{\cat{Q}}_{g, r, N}^{\ve} \circ \left(p_1 - \frac{\partial}{\partial p_1}\frac{\cat{Q}_{0, r, N}^{\ve}}{\cat{e}(\Gr(r, N))}\right).\]
\end{thm}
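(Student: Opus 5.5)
The plan is to reduce Theorem \ref{thm:main} to the no-rational-tails comparison in Corollary \ref{cor:nrtqvsm} by stripping off the genus-zero rational-tail contributions on both sides.

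First, combine Lemma \ref{lem:nrtplethysm} with Lemma \ref{lem:MQstar} to rewrite the expression inside the brackets on the left. By Lemma \ref{lem:MQstar}, $\frac{1}{\cat{e}(\Gr(r,N))}\frac{\partial}{\partial p_1}\cat{M}_{0,r,N} = \cat{M}^\star_{0,r,N}$. By Lemma \ref{lem:nrtplethysm}, $\overline{\cat{M}}_{g,r,N} = \overline{\cat{M}}^{nrt}_{g,r,N}\circ(p_1+\overline{\cat{M}}^\star_{0,r,N})$. Plethysm is associative, so
\[ \overline{\cat{M}}_{g,r,N}\circ(p_1-\cat{M}^\star_{0,r,N}) = \overline{\cat{M}}^{nrt}_{g,r,N}\circ\bigl((p_1+\overline{\cat{M}}^\star_{0,r,N})\circ(p_1-\cat{M}^\star_{0,r,N})\bigr) = \overline{\cat{M}}^{nrt}_{g,r,N}, \]
where the last equality uses the inversion formula of Lemma \ref{lem:g0plethysm}. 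The same argument applies verbatim to $\overline{\cat{Q}}^{\ve}_{g,r,N}$, using the parallel statements for $\Qbar^{\ve}$ in Lemmas \ref{lem:nrtplethysm}, \ref{lem:MQstar} and \ref{lem:g0plethysm}. It shows that the right-hand side equals $\overline{\cat{Q}}^{\ve,nrt}_{g,r,N}$, and likewise for $\ve=0^+$, where $\Qbar^{\ve}$ is the stable quotient space for $\ve<1/d$ degree by degree.

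With these two reductions, the theorem becomes exactly the identity $\boldsymbol{B}_{r,\ve}(\overline{\cat{M}}^{nrt}_{g,r,N}) = \overline{\cat{Q}}^{\ve,nrt}_{g,r,N}$. This is Corollary \ref{cor:nrtqvsm} for $\ve>0$, and its second statement for $\ve=0^+$. For $g\ge 1$ there is no further work.

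The main point needing care is legitimacy of the plethystic manipulations. The inner arguments $p_1+\overline{\cat{M}}^\star_{0,r,N}$ and $p_1-\cat{M}^\star_{0,r,N}$ have no bidegree-$(0,0)$ term, since genus-zero pointed spaces with $n=0$ and $d=0$ are empty. So the plethysms are well defined, as in Definition \ref{defn:plethysmgraded}, and associativity applies.

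A second point is the $\ve\to 0^+$ case. Here $\Qbar^{0}$ means choosing $\ve$ small relative to $d$ in each graded piece. I would check that Lemma \ref{lem:nrtplethysm} and Lemma \ref{lem:g0plethysm} hold degree by degree for this choice. This works because rational tails carry degree at most $d$, so in each fixed degree a single small $\ve$ serves all the strata involved.
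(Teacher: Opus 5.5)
Your proposal is correct and follows the paper's proof essentially verbatim. The paper likewise inverts Lemma \ref{lem:nrtplethysm} via Lemma \ref{lem:g0plethysm} to get $\overline{\cat{M}}^{nrt}_{g,r,N} = \overline{\cat{M}}_{g,r,N}\circ(p_1-\cat{M}^\star_{0,r,N})$ and its quasimap analogue, rewrites the inner term using Lemma \ref{lem:MQstar}, and plugs the result into Corollary \ref{cor:nrtqvsm}; your remarks on the absence of bidegree-$(0,0)$ terms and on choosing $\ve$ degree by degree in the $0^+$ case are sound bookkeeping that the paper leaves implicit.
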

\begin{proof}
    Lemma \ref{lem:g0plethysm} inverts the formula in Lemma \ref{lem:nrtplethysm} to give \[\mathsf{M}^{nrt}_{g,r,N} = \overline{\cat{M}}_{g,r,N}\circ (p_1 - \mathsf{M}^{\star}_{0,r,N}) \stackrel{\text{(\ref{lem:MQstar})}}{=}\overline{\cat{M}}_{g,r,N}\circ \left(p_1 - \frac{\partial}{\partial p_1}\frac{\cat{M}_{0, r, N}}{\cat{e}(\Gr(r, N))}\right),\text{ and }\]
    \[\mathsf{Q}^{\ve, nrt}_{g,r,N} = \overline{\cat{Q}}^{\ve}_{g,r,N}\circ (p_1 - \mathsf{Q}^{\ve,\star}_{0,r,N}) \stackrel{\text{(\ref{lem:MQstar})}}{=}\overline{\cat{Q}}^{\ve}_{g,r,N}\circ \left(p_1 - \frac{\partial}{\partial p_1}\frac{\cat{Q}_{0, r, N}^{\star}}{\cat{e}(\Gr(r, N))}\right).\] The result follows from plugging the above two formulas into Corollary \ref{cor:nrtqvsm}.
\end{proof}
\subsection{Serre characteristics of genus-zero maps}
For completeness, we recall previous work of Bagnarol \cite{Bagnarol} on $\cat{M}_{0,r,N}$ and explain how they can be adapted to determine $\cat{Q}^{\ve}_{0,r,N}.$ Combining these with Lemma \ref{lem:MQstar} leads to formulas of $\cat{M}^\star_{0,r,N}$ and $\cat{Q}^{\ve,\star}_{0,r,N}.$

\begin{defn}
Let $\mathrm{Quot}_{(N-r),d}:=\mathrm{Quot}^{(t+1)(N-r)+d}_{\P^1}(\mathcal{O}^{N})$ be the Quot scheme of rank-$(N-r),$ degree-$d$ quotients of $\mathcal{O}_{\P^1}^{N},$ which have Hilbert polynomial $(t+1)(N-r)+d\in \mathbb{Q}[t].$
\end{defn}

The space of parametrized maps $\mathrm{Map}_d(\P^1, \Gr(r,N))$ is an open subscheme of $\mathrm{Quot}_{(N-r),d}$ where the quotient sheaf is locally free. We also have $\mathrm{Quot}_{(N-r),d}/\mathrm{PGL}_2\cong \Q_{0,0}^{0^+}(\Gr(r,N),d),$ the moduli space of stable quotients on a unmarked rational curve.

The Grothendieck ring class of $\mathrm{Quot}_{(N-r),d}$ has been determined by Strømme via its Białynicki--Birula cell decomposition coming from a torus action.

\begin{thm}\cite[Theorem 2.1]{Stromme}
Let $m_{d,i}$ be the number of triples $(\mathbf{a}, \mathbf{b}, \mathbf{c})\in \mathbb{Z}^r\times \mathbb{Z}^{r+1}\times \mathbb{Z}^r$ that satisfy \[b_0 = 0\leq a_1\leq b_1\leq a_2\leq \cdots\leq b_{s-1}\leq a_s\leq d = b_s,\] \[0\leq c_1\leq \cdots\leq c_s\leq N-r,\] \[\sum_{j=1}^r (a_j  + c_j(1+b_j-b_{j-1}))=i,\text{ then }\] \[[\mathrm{Quot}_{(N-r),d}] = \sum_{i = 0}^{Nd + r(N-r)}m_{d,i}\mathbb{L}^i\in K_0(\mathsf{Var}).\]
\end{thm}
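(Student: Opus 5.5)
The statement is Strømme's theorem, and the natural proof is a Białynicki-Birula decomposition of $\mathrm{Quot}_{(N-r),d}$, which is smooth and projective of dimension $Nd + r(N-r)$. We use a torus $T$ with finitely many fixed points, take the attracting cells, and read off $[\mathrm{Quot}_{(N-r),d}] = \sum_{\text{fixed } p} \bbL^{\dim T_p^+}$. The work is to enumerate the fixed points and compute the positive-weight part of the tangent space at each.

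\textbf{Torus and fixed points.} Let $T = (\C^\star)^N \times \C^\star$ act, with $(\C^\star)^N$ scaling the summands of $\calO^N$ and $\C^\star$ acting on $\P^1$ with fixed points $0,\infty$. A kernel $\calV \subset \calO^N$ of rank $r$ and degree $-d$ is $T$-fixed exactly when it splits as $\bigoplus_{j=1}^{r} \calO(-a_j[0]-b'_j[\infty]) \cdot e_{i_j}$ for distinct indices $i_1<\dots<i_r$, with $\sum_j (a_j + b'_j) = d$. So the fixed points are finite, indexed by an $r$-subset of $\{1,\dots,N\}$ together with the pairs $(a_j,b'_j)$. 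Reparametrizing the subset by the counts $0\le c_1\le\dots\le c_r\le N-r$ of unchosen indices preceding each chosen one, and encoding the degree data by interlacing partial sums $b_0=0\le a_1\le b_1\le\cdots\le a_r\le b_r=d$, should give a bijection with Strømme's triples $(\mathbf{a},\mathbf{b},\mathbf{c})$. Here I read the $s$ in the statement as $r$.

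\textbf{Tangent weights.} The tangent space is $\mathrm{Hom}(\calV,\calO^N/\calV) = H^0(\calV^\vee\otimes \calO^N/\calV)$, and $H^1$ vanishes by smoothness. At a fixed point this splits into line-bundle-type summands $H^0(\calO(\alpha[0]+\beta[\infty]))\cdot(e_{i}\otimes e_{i_j}^\vee)$, plus torsion contributions at $0$ and $\infty$, each with explicit characters. Choose a generic one-parameter subgroup: weights on the $e_i$ strongly separated and ordered, and a small weight on $\P^1$. Then count the positive-weight characters. Pairs $(i,i_j)$ with $i$ unchosen and $i<i_j$ contribute full $H^0$'s, of size roughly $1+b_j-b_{j-1}$ each and $c_j$ of them, giving the $c_j(1+b_j-b_{j-1})$ term. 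The $\P^1$-direction positive sections at $0$ contribute $a_j$.

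\textbf{Main obstacle.} The obstacle is this bookkeeping. I must verify that the total positive dimension is exactly $\sum_j \bigl(a_j + c_j(1+b_j-b_{j-1})\bigr)$, and that the chosen-chosen pairs and the torsion parts at $\infty$ contribute only negative weights, once the change of variables from $(a_j,b'_j)$ to the interlacing sequence is fixed correctly. I would settle the parametrization by first checking the case $r=1$, where the space is $\P(H^0(\calO(d))^N)=\P^{N(d+1)-1}$, and the case $N=r$, where it is $\Sym$-type Quot schemes of points. I would then choose the ordering of weights to match. A consistency check is that the count of triples equals the Euler characteristic and that the top index equals $Nd+r(N-r)$.
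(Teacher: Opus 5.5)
Your strategy is the one behind the cited result: the paper gives no argument of its own and simply invokes Strømme's Białynicki-Birula decomposition. Your fixed-point analysis is also right. The $T$-fixed points are an $r$-subset $i_1<\dots<i_r$ together with divisors $D_j=\alpha_j[0]+\beta_j[\infty]$ on the summands $\calO e_{i_j}$, with $\sum_j \deg D_j=d$, and these are equinumerous with the triples.

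However, the step you defer as bookkeeping is asserted incorrectly, and as stated it fails. To get the term $c_j(1+b_j-b_{j-1})$ with $c_j=i_j-j$ for \emph{every} subset, the one-parameter subgroup must satisfy $w_1>w_2>\cdots>w_N$. With this ordering, consider the off-diagonal chosen--chosen summands
\[\mathrm{Hom}(\calO(-D_j),\calO_{D_k})\,e_{i_k}\otimes e_{i_j}^\vee,\qquad k\neq j.\]
Their dominant weight is $w_{i_k}-w_{i_j}$. So for $k<j$, all $\deg D_k=\alpha_k+\beta_k$ of these directions are attracting, torsion at $\infty$ included. Only the diagonal summands $\mathrm{Hom}(\calO(-D_j),\calO_{D_j})$ are decided by the small $\P^1$-weight. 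They give $\alpha_j$ positive directions at $0$ and none at $\infty$. If chosen--chosen pairs contributed nothing positive, as you claim, the count would already be wrong for $N=r=2$, $d=1$: the four fixed points would give $2+2\bbL$ rather than $[\P^1\times\P^1]=(1+\bbL)^2$.

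The correct positive dimension at the fixed point is
\[\sum_{j=1}^r\Bigl(c_j(1+\alpha_j+\beta_j)+\sum_{k<j}(\alpha_k+\beta_k)+\alpha_j\Bigr).\]
The change of variables is $b_j=\sum_{k\le j}(\alpha_k+\beta_k)$ and $a_j=b_{j-1}+\alpha_j$. So Strømme's $a_j$ is not the multiplicity at $0$, as your notation $\calO(-a_j[0]-b'_j[\infty])$ suggests. It is that multiplicity plus the chosen--chosen contribution $b_{j-1}$, and this is exactly where the interlacing comes from. With this correction the sum equals $\sum_j(a_j+c_j(1+b_j-b_{j-1}))$, and the BB decomposition gives the theorem.

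A minor point on logic: smoothness should be deduced from $H^1(\calV^\vee\otimes\calO^N/\calV)=0$, not the other way round. This vanishing holds because $\calV^\vee$ is a sum of $\calO(m)$ with $m\ge 0$ and the quotient is globally generated.
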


\begin{notn}
We set the following conventions.
  \begin{enumerate}
    \item Let $\mathsf{Quot}_{N-r, \mathbb{P}^1}$ be the generating function $\sum_{d=0}^\infty [\mathrm{Quot}_{(N-r),d}]q^d.$
    \item Let $\mathsf{Quot}_{0, \mathbb{P}^1}$ be the generating function $\sum_{k = 0}^\infty [\mathrm{Quot}^k_{\P^1}(\mathcal{O}^r)]q^k.$ See Theorem \ref{thm:quotmotive} for the formula of $[\mathrm{Quot}^k_{\P^1}(\mathcal{O}^r)]\in K_0(\mathsf{Var})$.
    \item Following \cite{GetzlerPandharipande}, let $\mathsf{F}(\mathbb{P})$ denote the $\mathbb{S}$-space $\mathsf{F}(\mathbb{P})(n):=\mathrm{Conf}^n(\P^1).$ Theorem 3.2 of loc. cit. determines $\cat{e}^{\bbS}(\mathsf{F}(\mathbb{P})).$
  \end{enumerate}
\end{notn}

Stratifying the boundary of $\mathrm{Map}_d(\P^1, \Gr(r,N))\subset \Quot_{(N-r),d}$ by the torsion of the quotient sheaf, Bagnarol compares their Grothendieck ring classes.
\begin{thm}\label{thm:bagnarol} \cite[Theorem 4.14]{Bagnarol} We have the following formulas in $K_0(\mathsf{MHS})\otimes \Lambda[\![q]\!]$:
  \[\mathsf{M}_{0,r,N} = \frac{\mathsf{e}^{\bbS}(\mathsf{F}(\mathbb{P}))}{\mathsf{e}(\mathrm{PGL}_2)} \mathsf{Quot}_{N-r, \mathbb{P}^1} \cdot (\mathsf{Quot}_{0, \mathbb{P}^1})^{-1}.\]
\end{thm}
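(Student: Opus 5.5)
The plan is to stratify the space of smooth-domain genus-zero maps by comparison with parametrized maps, mirroring the Getzler--Pandharipande treatment of $\P^r$. First, write $\M_{0,n}(\Gr(r,N),d)$ for $d>0$ as the quotient
\[\big(\mathrm{Conf}^n(\P^1)\times \mathrm{Map}_d(\P^1,\Gr(r,N))\big)/\mathrm{PGL}_2.\]
The $\mathrm{PGL}_2$-action is free, since positive-degree maps have finite automorphism groups, which are killed by passing to the quotient stack. It is also Zariski locally trivial in the sense needed for Serre characteristics, because $\mathrm{PGL}_2$ is a special group after passing to $\mathrm{SL}_2$. With both caveats we get
\[\cat{e}^{\bbS_n}(\M_{0,n}(\Gr(r,N),d)) = \cat{e}^{\bbS_n}(\mathrm{Conf}^n(\P^1))\cdot \cat{e}(\mathrm{Map}_d(\P^1,\Gr(r,N)))/\cat{e}(\mathrm{PGL}_2),\]
with $\bbS_n$ acting only on the configuration factor. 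Summing over $n$ and $d$ then reduces the statement to the non-equivariant identity
\[\sum_d \cat{e}(\mathrm{Map}_d(\P^1,\Gr(r,N)))q^d = \mathsf{Quot}_{N-r,\P^1}\cdot(\mathsf{Quot}_{0,\P^1})^{-1}.\]
The low-degree terms, $d=0$ and the unstable $n$, need separate bookkeeping by convention, or can be absorbed in the definition of $\cat{e}^{\bbS}(\mathsf{F}(\mathbb{P}))$.

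For the second step, stratify $\mathrm{Quot}_{(N-r),d}$ by the length $k$ of the torsion of the quotient. Exactly as in Lemma \ref{lem:smooth_qstrata}, saturating the kernel identifies the stratum with torsion length $k$ with the relative Quot scheme
\[\Quot^k_{\P^1\times \mathrm{Map}_{d-k}/\mathrm{Map}_{d-k}}(ev^*\calS).\]
Here the family $\P^1\times\mathrm{Map}_{d-k}\to\mathrm{Map}_{d-k}$ is trivial and has no sections to remove. By Lemma \ref{lem:indep-Quot} and Proposition \ref{prop:relquot}, taken for small $\ve$ and over the trivial family, the class of this stratum is $[\mathrm{Map}_{d-k}]\cdot[\Quot^k_{\P^1}(\calO^r)]$. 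Indeed, for the product family the relative symmetric products are $\mathrm{Map}_{d-k}\times\Sym^{k_i}(\P^1)$, and Theorem \ref{thm:quotmotive}(2) then gives the fiber class.

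Summing over $k$ gives $\mathsf{Quot}_{N-r,\P^1} = \big(\sum_d[\mathrm{Map}_d]q^d\big)\cdot\mathsf{Quot}_{0,\P^1}$. Since $\mathsf{Quot}_{0,\P^1}$ has constant term $1$, it is invertible in the power series ring, which yields the required identity for $\sum_d[\mathrm{Map}_d]q^d$. Applying $\cat{e}$ and combining with the first step gives the formula, where Strømme's theorem makes $\mathsf{Quot}_{N-r,\P^1}$ explicit.

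The main obstacle is the first step. Dividing by $\cat{e}(\mathrm{PGL}_2)$ must be justified equivariantly, and the degenerate cases must be handled: when $d=0$ and $n\le 2$ the action is not free, and the stabilizers are nontrivial for multiple covers. I would first try the Getzler--Pandharipande argument. There one fixes three points, or uses that $\mathrm{PGL}_2$-torsors over these bases are Zariski locally trivial after a finite correction, and checks that the resulting quotient identity holds in $K_0(\cat{MHS})\otimes\Lambda$ since rational cohomology ignores finite stabilizers. The remaining steps are formal given the results already established.
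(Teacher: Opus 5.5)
Your proposal is correct and is essentially the argument the paper attributes to Bagnarol and sketches in one line. You stratify $\mathrm{Quot}_{(N-r),d}$ by torsion length and identify the $k$-th stratum with a relative Quot scheme of $ev^*\calS$ over $\mathrm{Map}_{d-k}$, whose class is $[\mathrm{Map}_{d-k}]\,[\Quot^k_{\P^1}(\calO^r)]$; you then invert $\mathsf{Quot}_{0,\P^1}$ and pass to $\M_{0,n}(\Gr(r,N),d)$ through the Getzler--Pandharipande $\mathrm{PGL}_2$-quotient.

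One correction concerns the justification in your first paragraph, which is wrong as written. The action is not free in general: for $n\le 2$, multiple covers have nontrivial finite stabilizers. Also, $\mathrm{PGL}_2$ is not special, and lifting to $\mathrm{SL}_2$ never gives a torsor, since $\pm 1$ lies in every stabilizer. So the division by $\cat{e}(\mathrm{PGL}_2)$ should rest on your last paragraph's argument instead: for a connected group acting with finite stabilizers, the fibers $G/\Gamma$ have the rational cohomology of $G$ and trivial monodromy. This gives the identity in $K_0(\cat{MHS})\otimes\Lambda$, not in $K_0(\cat{Var})$.
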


Applying Corollary \ref{cor:Qstrat_over_smooth} in $g = 0$ and taking equivariant Serre characteristics as in §\ref{subsec:wcnrt}, we have

\begin{lem}\label{lem:genuszero-wall-crossing}
    \[\mathsf{Q}^{\varepsilon}_{0,r,N} = \boldsymbol{B}_{r, \ve}(\cat{M}_{0, r, N}).\]
\end{lem}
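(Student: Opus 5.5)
The plan is to rerun the argument of Corollary \ref{cor:nrtqvsm} in genus zero, replacing Theorem \ref{thm:relquot-to-weightedstablemaps} by its smooth-curve version, Corollary \ref{cor:Qstrat_over_smooth}. Since $\Q^{\ve}_{0,n}(\Gr(r,N),d)$ has smooth domain curves, it is stratified by torsion length $k$ into the strata $\Q^{\ve,k}_{0,n}(\Gr(r,N),d)$. Corollary \ref{cor:Qstrat_over_smooth} (which holds for every $g$, including $g=0$) gives, $\bbS_n$-equivariantly,
\[
\cat{e}^{\bbS_n}(\Q^{\ve,k}_{0,n}(\Gr(r,N),d)) = \sum_{k_1+\cdots+k_r=k}\bbL^{\sum_i (i-1)k_i}\,\cat{e}^{\bbS_n}\!\left(\frac{\M_{0,n|\ve^k}(\Gr(r,N),d-k)}{\bbS_{k_1}\times\cdots\times\bbS_{k_r}}\right).
\]
Stability is not an issue here. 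Each $\M_{0,n|\ve^k}(\Gr(r,N),d-k)$ is simply a relative weighted configuration space over $\M_{0,n}(\Gr(r,N),d-k)$, which is nonempty only when the underlying map is stable. Likewise, a smooth $\ve$-stable quasimap whose saturation has degree $0$ is unstable unless $n\ge 3$, which is exactly the condition for the corresponding stable map to exist. So the strata match on both sides.

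Next I would prove the smooth-curve analogue of Lemma \ref{lem:Grothendieck_ring_collisions}. Stratify $\M_{0,n|\ve^k}(\Gr(r,N),d)$ by the number $j$ of distinct positions occupied by the light points. The stratum with $j$ distinct positions is the $\bbS_j$-quotient of $\Res^{\bbS_{n+j}}_{\bbS_n\times\bbS_j}\M_{0,n+j}(\Gr(r,N),d)$ times the set of ordered set partitions of $\{1,\dots,k\}$ into $j$ blocks, each of size at most $M_\ve$. This gives
\[
\cat{e}^{\bbS\times\bbS}(\M^{\ve}_{0,r,N}) = \Delta\cat{M}_{0,r,N}\circ_2\Exp_{\le M_\ve}(p_1),
\]
and in the $\ve\to 0^+$ limit the same formula with $\Exp(p_1)$. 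The proof is identical to the one given for Lemma \ref{lem:Grothendieck_ring_collisions}, since no rational-tails condition enters there. Combining this with the display above, exactly as in Corollary \ref{cor:complicated_wall_crossing}, gives
\[
\cat{Q}^{\ve}_{0,r,N} = \Delta_2^{r-1}\bigl(\Delta\cat{M}_{0,r,N}\circ_2\Exp_{\le M_\ve}(p_1)\bigr)\big|_{p^{(1)}_j\mapsto p_j,\ p^{(k)}_j\mapsto(\bbL^{k-2}q)^j}.
\]
Finally, the composite of coproduct, plethysm and substitution is a $\QQ$-algebra homomorphism in $\cat{M}_{0,r,N}$. The computation in the proof of Corollary \ref{cor:nrtqvsm} shows it sends $p_n$ to $p_n+\sum_{k=1}^{M_\ve}\qbinomopt[\bbL^n]{r+k-1}{k}q^{nk}$, so it equals $\boldsymbol{B}_{r,\ve}$. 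The $\ve=0$ case uses Lemma \ref{lem:sym_of_Pr} in the same way.

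The main obstacle I expect is bookkeeping in genus zero. First, I must check that the unstable range (small $n$ with $d-k=0$) contributes zero on both sides, so that no correction terms appear. Second, I must confirm that the $\bbS_n$-equivariant version of Corollary \ref{cor:Qstrat_over_smooth} holds when the base $\M_{0,n}(\Gr(r,N),d-k)$ is a quotient by $\mathrm{PGL}_2$-type automorphisms, which could make it a stack. I would handle the second point by noting that for $n\ge 3$ or $d-k>0$ the stable maps from smooth rational curves have finite automorphism groups, so the vector-bundle and Thom-isomorphism arguments go through unchanged with rational coefficients.
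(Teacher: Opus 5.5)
Your route is the paper's own. Its proof is the single sentence ``apply Corollary~\ref{cor:Qstrat_over_smooth} with $g=0$ and take equivariant Serre characteristics as in \S\ref{subsec:wcnrt}'', and you fill in exactly those steps. The gap is the paragraph where you declare that stability is not an issue.

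On a smooth rational curve, $\ve$-stability of a quasimap is the \emph{global} condition $n-2+\ve d>0$ on the total degree $d$, plus $\ell(p)\le 1/\ve$. For $n\le 1$ this is not implied by stability of the saturated map of degree $d-k$. Take $n=1$, $d=1$ and $\ve\in(1/2,1]$, so $M_\ve=1$:
\begin{itemize}
\item $\Q^{\ve}_{0,1}(\Gr(r,N),1)=\varnothing$, since $\omega_{\P^1}(p_1)\otimes(\wedge^r\calV^*)^{\otimes\ve}$ has degree $-1+\ve\le 0$;
\item the coefficient of $p_1q$ in $\boldsymbol{B}_{r,\ve}(\cat{M}_{0,r,N})$ is $\cat{e}(\M_{0,1}(\Gr(r,N),1))\neq 0$. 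Lines with a marked point form a nonempty partial flag variety, and the only other possible source is the unstable $(n,d)=(2,0)$ term.
\end{itemize}
So Corollary~\ref{cor:Qstrat_over_smooth} fails in genus zero for $n\le 1$.

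Your smooth genus-zero analogue of Lemma~\ref{lem:Grothendieck_ring_collisions} fails too. For a constant map, $\M_{0,n|\ve^k}$ requires $n-2+k\ve>0$, whereas $\M_{0,n+j}$ requires $n+j\ge 3$. These differ when $n\le 1$, e.g.\ $n=1$, $k=j=2$, $\ve\le 1/2$: the weighted space is empty but $\M_{0,3}(\Gr(r,N),0)/\bbS_2\neq\varnothing$. The no-rational-tails hypothesis is exactly what makes the two stability conditions agree in Lemma~\ref{lem:Grothendieck_ring_collisions}. So it is not true that no such condition ``enters there''.

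Your assertion that a quasimap with constant saturation is unstable unless $n\ge 3$ is also wrong: $n=2$, $k\ge 1$ gives stable objects. In that case the weighted bookkeeping happens to give the right answer anyway.

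Consequently no argument of this shape can establish the identity as displayed, because it is false in $\bbS_n$-degrees $n\le 1$. The paper's one-line proof has the same blind spot. What your argument does prove, once the stability comparison is redone, is the following:
\begin{itemize}
\item In all $\bbS_n$-degrees $n\ge 2$, the identity holds. There quasimap stability, weighted stability of $\M_{0,n|\ve^k}(\Gr(r,N),d-k)$, and stability of $\M_{0,n+j}(\Gr(r,N),d-k)$ all coincide.
\item In degrees $n\le 1$, the identity holds exactly for the $q^d$-terms with $\ve d>2-n$. There, when $d=k$, the bound $j\ge k\ve$ forces $n+j\ge 3$. The remaining terms of $\mathsf{Q}^{\ve}_{0,r,N}$ are zero; for $\ve=0^+$ this means all $n\le 1$ terms vanish.
\end{itemize}
This restriction matters downstream: $\partial/\partial p_1$ of the $n=1$ part of $\mathsf{Q}^{\ve}_{0,r,N}$ is what accounts for unmarked rational tails in $\mathsf{Q}^{\ve,\star}_{0,r,N}$.

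A smaller point: for $n=2$, $d=k$, do not pass through a Quot scheme over the unstable $\M_{0,2}(\Gr(r,N),0)$, which has infinite automorphisms. Work directly with $\M_{0,2|\ve^k}(\Gr(r,N),0)$, which the light points stabilize.
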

Together, Theorem \ref{thm:bagnarol} and Lemma \ref{lem:genuszero-wall-crossing} determine $\mathsf{Q}^{\varepsilon}_{0,r,N}$ for all $\ve$. Combining with Lemma \ref{lem:g0plethysm}, this implies that Theorem \ref{thm:main} is both an invertible and computationally effective transformation between $\cat{\overline{M}}_{g, r, N}$ and $\cat{\overline{Q}}_{g, r, N}^{\ve}$ for any $\ve$ and $g \geq 1$.

\begin{rem}\label{rem:genus_zero}
    Note that Theorem \ref{thm:bagnarol} and Lemma \ref{lem:genuszero-wall-crossing} together determine the generating functions $\overline{\cat{M}}_{0, r, N}^\star$ and $\overline{\cat{Q}}_{0, r, N}^{\ve,\star}$, which determines the $\bbS_{n -1}$-equivariant Serre characteristic
    \[ \cat{e}^{\bbS_{n - 1}}(\Qbar_{0, n}^{\ve}(\Gr(r, N), d)). \]
    The full $\bbS_n$-equivariant Serre characteristic can be computed following the proof of \cite[Theorem 4.5]{GetzlerPandharipande} or \cite[Theorem 3.4]{Bagnarol}; these theorems and their proofs go through unchanged for the quasimap spaces.
\end{rem}

\subsection{Contracting genus-zero maps to $\P^{N-1}$}\label{subsec:alternative_calculation}

We outline an alternative wall-crossing formula in the case of maps to $\Gr(1,N) = \P^{N-1}.$ Marian--Oprea--Pandharipande \cite[§5.2]{MOP} constructs a contraction morphism $c: \Mbar_{g,n}(\P^{N-1},d)\to \Qbar_{g,n}(\P^{N-1},d)$ that contracts unmarked rational tails to basepoints. The morphism $c$ factors through $$\Mbar_{g,n}(\P^r,d)\to \cdots\to \Qbar^{1/\delta}_{g,n}(\P^r,d)\to\Qbar^{1/(\delta+1)}_{g,n}(\P^r,d)\to \cdots\to \Qbar_{g,n}(\P^r,d),$$ in which each $\Qbar^{1/\delta}_{g,n}(\P^r,d)\to\Qbar^{1/(\delta+1)}_{g,n}(\P^r,d)$ contracts maximal rational tails of degree $\leq (\delta+1)$ as basepoints. We stratify both moduli spaces based on this contraction morphism.

\begin{defn}
    Let $\Mbar^{n_0rt}_{g, 1, N}$ be the graded $\bbS$-space as the subspace of $\Mbar_{g,1,N}$ that has no unmarked rational tail. For $\varepsilon>0,$ let $\Mbar^{n^{\varepsilon}_0rt}_{g, 1,N}$ be the subspace of $\Mbar_{g, 1,N}$ that has no unmarked rational tail of (total) degree less than or equal to $M_{\varepsilon}.$ As before, denote their equivariant Serre characteristics as $\overline{\mathsf{M}}^{n_0rt}_{g,1,N}$ and $\overline{\mathsf{M}}^{n_0^{\varepsilon}rt}_{g,1,N}$ respectively. To record contributions from unmarked rational tails, let \[\overline{\cat{M}}_{0,1,N}^\star(0):=\sum_{\delta\geq 1}\cat{e}(\Mbar_{0,0}^{\star}(\P^{N-1},\delta))q^\delta \text{ and } \overline{\cat{M}}_{0,1,N}^{\star}(0)_{\leq M_\varepsilon}:=\sum_{\delta\leq M_\varepsilon}\cat{e}(\Mbar_{0,0}^{\star}(\P^{N-1},\delta))q^\delta.\]
    
\end{defn}

Stratifying moduli spaces of stable maps and $\ve$-stable quasimaps by the degree of rational tails and lengths of basepoints respectively, we get the following plethystic formulas that are analogous to Lemma \ref{lem:nrtplethysm}.
\begin{lem}\label{lem:n0rtplethysm} We have the following formulas of equivariant Serre characteristics for $\ve>0$:
  \begin{enumerate}
    \item $\overline{\cat{M}}_{g,1,N} = \overline{\cat{M}}_{g,1,N}^{n_0^{\ve}rt}\circ (p_1 + \overline{\cat{M}}_{0,1,N}^{\star}(0)_{\leq M_\varepsilon}),$
    \item $\overline{\cat{Q}}^{\varepsilon}_{g,1,N} = \overline{\cat{M}}_{g,1,N}^{n_0^{\ve}rt}\circ \left(p_1 + \frac{q(1 - q^{M_\ve})}{1- q} \right).$
  \end{enumerate}

  The following can be seen as the $\ve\to 0^+$ limit:
  \begin{enumerate}
    \item $\overline{\cat{M}}_{g,1,N} = \overline{\cat{M}}_{g,1,N}^{n_0rt}\circ (p_1 + \overline{\cat{M}}_{0,1,N}^{\star}(0)),$
    \item $\overline{\cat{Q}}^{0}_{g,1,N} = \overline{\cat{M}}_{g,1,N}^{n_0rt}\circ \left(p_1 + \frac{q}{1-q}\right).$
  \end{enumerate}
\end{lem}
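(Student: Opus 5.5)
The plan is to build both identities on one combinatorial skeleton: the ``core'' obtained by pruning \emph{unmarked} rational tails. For a stable map $f:C\to\P^{N-1}$ of genus $g\geq 1$, define its maximal unmarked rational tails to be the maximal connected genus-zero subcurves $T\subset C$ meeting $\overline{C\smallsetminus T}$ in a single node and carrying no markings. Each such $T$ has positive degree, since otherwise it would contain an unstable contracted component. Let $C'$ be the closure of the complement of the tails of degree $\leq M_\ve$. Then $C'$ contains the core, so it is a prestable curve of genus $g$, and the restriction $f|_{C'}$ has no unmarked rational tail of degree $\leq M_\ve$. We mark the attaching points on $C'$ and record the tail degrees $\delta_i$.

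The key steps, in order, are as follows.

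\begin{enumerate}
\item Check that $(C',f|_{C'})$, with the attaching points forgotten, is a stable map. Forgetting them can only destabilize a contracted component $R\subset C'$ that had an attaching point. But such an $R$ has $2g(R)-2+\nu_R\geq 1$ counting the removed node, and it still has $\nu_R\geq 2$ unless $R$ is a leaf of $C'$. A contracted leaf carrying an attachment and no markings would, together with its tails, form a larger unmarked tail of degree $\leq M_\ve$, contradicting maximality. The same degree bookkeeping shows that attaching tails of degree $\leq M_\ve$ at distinct points does not create new tails of degree $\leq M_\ve$.

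\item Get identity (1). The maps with core data in a fixed stratum form, up to the $\bbS_k$ permuting the attachment points, a Zariski locally trivial fibration. By homogeneity of $\P^{N-1}$ under $\GL_N$, exactly as in Lemma~\ref{lem:MQstar}, its fiber is a product of $\Mbar_{0,0}^\star(\P^{N-1},\delta_i)$. Since the attachment points are unlabeled and distinct, the count is precisely the composition $\calX\circ(\sigma_1+\calY)$ of graded $\bbS$-spaces. Here $\sigma_1$ accounts for genuine markings, and $\calY$ is the $\bbS$-space concentrated in $n=1$ with $\calY(1,\delta)=\Mbar^\star_{0,0}(\P^{N-1},\delta)$ for $1\leq\delta\leq M_\ve$. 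This is the same argument as Lemma~\ref{lem:nrtplethysm} and \cite[\S3.4]{ks-genus1}, so taking $\cat{e}^\bbS$ gives (1).

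\item Get identity (2) by the same decomposition on the quasimap side. For $r=1$ an $\ve$-stable quasimap is $\calO(-d)\subset\calO^N$, i.e.\ a map of degree $d-\sum\ell_j$ together with basepoints, where the torsion at $p_j$ has length $\ell_j\leq M_\ve$. Saturating gives a map $C\to\P^{N-1}$. The domain has no contracted rational components of the forbidden kind: a contracted unmarked tail would violate quasimap stability, and contracted bridges carry torsion by the stability condition. So the saturation lies in $\Mbar^{n_0^\ve rt}_{g,1,N}$, with the basepoints as extra unordered marked points. The length-$\ell$ torsion quotient of a line bundle at a point is unique, namely $\calO/\mathfrak m^\ell$. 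This is the $r=1$ case of Proposition~\ref{prop:relquot}, with $\qbinomopt[\bbL]{k}{k}=1$. Hence each basepoint contributes exactly $q^{\ell}$ with $1\leq\ell\leq M_\ve$, a total of $q(1-q^{M_\ve})/(1-q)$. Here I must verify that stability of the quasimap is equivalent to stability of the saturated map after forgetting basepoints. This is the same check as in step 1, now with a basepoint of length $\ell$ playing the role of a tail of degree $\ell$; the same maximality argument works. I also expect the residual plethysm variable to match, which gives (2).

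\item The $\ve\to0^+$ limits follow by letting $M_\ve\to\infty$ degree by degree.
\end{enumerate}

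The main obstacle is the stability bookkeeping in steps 1 and 3, namely that ``no unmarked tail of degree $\leq M_\ve$'' on the map side matches exactly ``basepoints of length $\leq M_\ve$ with $\ve$-stability'' on the quasimap side. In particular, I need to confirm that a contracted component adjacent to several small tails is handled identically by both sides. I would handle this by first checking the correspondence on the dual graph using the contraction morphism $c$ of \cite[\S5.2]{MOP}: it replaces each such tail by a basepoint of length equal to its degree. Then I would deduce fiberwise local triviality from the homogeneity of $\P^{N-1}$.
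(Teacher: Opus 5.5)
Your outline is essentially the argument the paper itself gives: stratify by small unmarked tails (resp.\ basepoints) and argue as in Lemma \ref{lem:nrtplethysm}. But the two bookkeeping claims you defer as the ``main obstacle'' are false, and the failure is fatal.

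In $\calX\circ(\sigma_1+\calY)$ the core $\calX(n+m,e)$ is an $\bbS_{n+m}$-space, so it cannot tell genuine markings from attaching points. Hence your claim that gluing tails of degree $\le M_\ve$ creates no new tails of degree $\le M_\ve$ fails whenever the core has a rational tail whose only markings are attaching points. Take $g=1$, $n=0$, $d=2$, $M_\ve\ge 2$, and the map $E\cup R\cup T$ with $E$ contracted, $R,T$ lines and $R$ a bridge. It arises in two ways:
\begin{itemize}
\item from the core $E\cup R$ marked at $R\cap T$ (a point of $\Mbar^{n_0^\ve rt}_{1,1,N}(1,1)$, since $R$ is marked) with tail $T$;
\item from the core $E$ with tail $R\cup T$.
\end{itemize}
The same happens for $E\cup R_0\cup T_1\cup T_2$ with $R_0$ contracted. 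Specializing to topological Euler characteristics with $N=3$, the right side of (1) is $3+12\cdot 2+6\cdot 9+36=117$, while $\chi(\Mbar_{1,0}(\P^2,2))=75$.

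The same defect breaks step 3. $\ve$-stability of a leaf $R$ carrying only basepoints requires $\deg f|_R+\sum_{x_i\in R}\ell_i>1/\ve$, which depends on the lengths. In (2), however, the factor $\sum_\ell q^\ell$ is attached independently of the core. So $(E\cup R,\ x\in R,\ \ell=1)$ with $R$ a line is counted in (2) but is not $\ve$-stable for $\ve\le 1/2$. Neither is $E$ with a contracted bubble carrying two basepoints of length $1$.

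Two smaller points:
\begin{itemize}
\item $C'$ must be obtained by removing the tails that are maximal among unmarked tails of degree $\le M_\ve$, not the maximal tails that happen to have small degree.
\item Your step-1 claim that $C'$ stays stable after forgetting the attaching points is unnecessary, since the core keeps them as markings. It is also false, e.g.\ for a contracted leaf carrying two tails of degree $M_\ve$.
\end{itemize}

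No redefinition of the core rescues the statement. Plethysm with $p_1+G$, for $G$ of $\bbS$-degree $0$, is invertible, so (1) determines $\overline{\cat{M}}^{n_0^\ve rt}_{g,1,N}$. Then (1) and (2) together formally imply Corollary \ref{cor:alternative_wall_crossing}. Test it at $g=1$, $n=0$, $d=2$, $N=3$, $\ve\to 0^+$, using
\begin{itemize}
\item $\chi(\Mbar_{1,0}(\P^2,2))=75$ and $\chi(\Mbar_{1,1}(\P^2,1))=36$;
\item $\chi(\Mbar_{1,1}\times\P^2)=6$;
\item $\chi(\Mbar^\star_{0,0}(\P^2,1))=2$ and $\chi(\Mbar^\star_{0,0}(\P^2,2))=9$.
\end{itemize}
Its right side is $75-36-48+0=-9$; the $\Mbar_{1,2}\times\P^2$ term is $12\,h_2\circ(-q)=0$. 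But $\chi(\Qbar_{1,0}(\P^2,2))=15$, in agreement with Table \ref{table:data} and with Theorem \ref{thm:main}. So the lemma as written is false, and your deferred stability check is exactly where any proof of it must break.

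What the MOP contraction does give, for $\ve\to0^+$, is a single $\bbS\times\bbS$-space $\mathcal{Z}$ of quasimap-stable marked maps in which genuine markings and basepoints are distinguished. Then $\overline{\cat{M}}_{g,1,N}$ and $\overline{\cat{Q}}^0_{g,1,N}$ are obtained from $\cat{e}(\mathcal{Z})$ by $\circ_2$ with $\overline{\cat{M}}^\star_{0,1,N}(0)$ and with $q/(1-q)$, respectively. Since $\mathcal{Z}$ is not of the form $\Delta\calX$, this does not collapse to one-variable plethysms. That is why the main theorem is proved through the no-rational-tails loci instead.
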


\begin{rem}
    We remark that item (2) in Lemma \ref{lem:n0rtplethysm} is a special case of Corollary \ref{cor:nrtqvsm}: it is straightforward to check that for any $f$,
    \[ f\circ \left(p_1 + \frac{q(1-q^{M_{\ve}})}{1 - q}\right) = \boldsymbol{B}_{1, \ve}(f) \text{ and }f\circ \left(p_1 + \frac{q}{1 - q}\right) = \boldsymbol{B}_{1, 0}(f).\]
\end{rem}

\begin{cor}\label{cor:alternative_wall_crossing} The equivariant Serre characteristics of $\ve$-stable quasimaps and stable maps to $\P^{N-1}$ are related by
  $$\overline{\cat{Q}}^{\ve}_{g,1,N} = \overline{\cat{M}}_{g,1,N}\circ \left(p_1 + \frac{q(1 - q^{M_\ve})}{1- q} - \overline{\cat{M}}_{0,1, N}^{\star}(0)_{\leq M_{\ve}}\right),$$
  $$\overline{\cat{Q}}^{0}_{g,1,N} = \overline{\cat{M}}_{g,1,N}\circ \left(p_1 + \frac{q}{1- q} - \overline{\cat{M}}_{0,1, N}^{\star}(0)\right).$$
\end{cor}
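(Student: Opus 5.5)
The plan is to combine the two formulas of Lemma \ref{lem:n0rtplethysm} by inverting the plethysm in item (1) and substituting into item (2). Write $G_\ve := p_1 + \overline{\cat{M}}_{0,1,N}^{\star}(0)_{\leq M_\ve}$ and $H_\ve := p_1 + \frac{q(1-q^{M_\ve})}{1-q}$. Note that both correction terms $G_\ve - p_1$ and $H_\ve - p_1$ have bidegree $(0,d)$ with $d\geq 1$: they are pure power series in $q$ with coefficients in $K_0(\cat{MHS})$ and no constant term. So $G_\ve$ and $H_\ve$ have the shape $p_1 + a(q)$ with $a(0)=0$, and all plethysms in sight are well-defined.

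The key algebraic step is to invert $G_\ve$ under plethysm. For any $a(q)$ of bidegree $(0,\ast)$ with $a(0)=0$, I claim that
\[(p_1 + a)\circ(p_1 - a) = p_1 - a + \psi_1(a) = p_1 .\]
The reason is that $p_1\circ g = g$, and that $a\circ g = a$ for any $g$, since $a$ has no $p$-dependence and plethysm $f\mapsto f\circ g$ is a $\QQ$-algebra homomorphism fixing $K_0(\cat{MHS})[\![q]\!]$ coefficients in the $p$-degree-zero part. I need to be slightly careful here: by Definition \ref{defn:plethysmgraded}, $f\mapsto f\circ g$ is only a $\QQ$-algebra map, and $q$ and classes in $K_0(\cat{MHS})$ are constants living in the left argument. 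The convention used throughout (e.g. in Lemma \ref{lem:nrtplethysm} and in Lemma \ref{lem:g0plethysm}) is that $q$ and $K_0(\cat{MHS})$ coefficients of the outer function are not acted on, i.e. $(c\, q^d f)\circ g = c\, q^d (f\circ g)$. I would state this convention explicitly, checking it against the geometric definition (\ref{eqn:comp}), where the grading $e$ of $\calX(k,e)$ is simply added. With this convention the computation above holds, and symmetrically $(p_1 - a)\circ(p_1+a)=p_1$.

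Then, by associativity of plethysm and item (1), we get $\overline{\cat{M}}_{g,1,N}^{n_0^\ve rt} = \overline{\cat{M}}_{g,1,N}\circ(p_1 - \overline{\cat{M}}_{0,1,N}^{\star}(0)_{\leq M_\ve})$. Substituting into item (2) gives
\[\overline{\cat{Q}}^{\ve}_{g,1,N} = \overline{\cat{M}}_{g,1,N}\circ\big((p_1 - a_\ve)\circ H_\ve\big), \qquad a_\ve := \overline{\cat{M}}_{0,1,N}^{\star}(0)_{\leq M_\ve}.\]
By the same constancy of $a_\ve$ under plethysm, $(p_1 - a_\ve)\circ H_\ve = H_\ve - a_\ve$, which is exactly the claimed argument. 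The $\ve\to 0^+$ case is identical, using the limiting versions of Lemma \ref{lem:n0rtplethysm} with $a_0 = \overline{\cat{M}}_{0,1,N}^{\star}(0)$ and $H_0 = p_1 + q/(1-q)$.

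The only real obstacle is justifying the convention that pure $q$-series with $K_0(\cat{MHS})$ coefficients pass unchanged through the outer slot of plethysm, since Definition \ref{defn:plethysmgraded} literally applies $\psi_n$ to $g$ only via $p_n$. I would resolve it by appeal to the $\bbS$-space composition (\ref{eqn:comp}), where the outer space's degree $e$ and its motivic class are untouched, and note that Lemma \ref{lem:nrtplethysm} already relies on this interpretation.
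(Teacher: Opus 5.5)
Your proposal is correct and is essentially the paper's own argument: the paper likewise notes that every bidegree-$(0,d)$ element $G$ satisfies $G\circ F = G$, deduces $(p_1 + a)\circ(p_1 - a) = p_1$ to invert item (1) of Lemma \ref{lem:n0rtplethysm}, then composes with item (2) and evaluates $(p_1 - a_\ve)\circ H_\ve = H_\ve - a_\ve$. The paper simply asserts the convention that $q$ and $K_0(\cat{MHS})$-coefficients in the outer slot pass through plethysm unchanged, whereas you justify it explicitly via (\ref{eqn:comp}); the stray $\psi_1(a)$ in your display is harmless, since $\psi_1$ is the identity.
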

\begin{proof}
  For any degree $(0, d)$ element $G \in K_0(\cat{MHS}) \otimes \Lambda [\![q]\!]$ and any $F \in K_0(\cat{MHS}) \otimes \Lambda [\![q]\!]$ we have $G \circ F = G$. Therefore $(p_1 + \overline{\cat{M}}_{0,1,N}^\star(0))\circ (p_1 - \overline{\cat{M}}_{0,1,N}^\star(0))=p_1,$ and the same holds for $\overline{\cat{M}}_{0,1,N}^\star(0)_{\leq M_{\ve}}.$

  Therefore, composing the two formulas from Lemma \ref{lem:n0rtplethysm} gives $$\overline{\cat{Q}}^{0}_{g,1, N} = \overline{\cat{M}}_{g,1, N}\circ (p_1 - \overline{\cat{M}}_{0,1,N}^\star(0))\circ \left(p_1 + \frac{q}{1- q}\right),$$ $$\overline{\cat{Q}}^{\ve}_{g,1, N} = \overline{\cat{M}}_{g,1, N}\circ (p_1 - \overline{\cat{M}}_{0,1,N}^\star(0)_{\leq M_{\ve}})\circ \left(p_1 + \frac{q(1 - q^{M_\ve})}{1- q}\right).$$ Evaluating the plethysm of the last two terms gives the desired formula.
\end{proof}

\begin{rem}
    In \cite[§5.3]{MOP}, the authors comment that there does not exist a contraction map $\Mbar_{g, n}(\Gr(r, N), d)$ to $\Qbar_{g, n}(\Gr(r, N), d)$ when $r\geq 2.$ From our perspective, one obstruction for such contraction morphisms to exist is the $\bbL$-factors appearing in the operators $\boldsymbol{B}_{r, \ve}$ and $\boldsymbol{B}_{r, 0}$ which contribute to the motives of the quasimap spaces.
\end{rem}

\section{The topological Euler characteristic in genus one}\label{sec:genus_one}
 We now explain how to combine this work with graph enumeration techniques from our previous work \cite{ks-genus1} to calculate the $\bbS_n$-equivariant \textit{topological} Euler characteristic
 \[ \chi^{\bbS_n}(\Qbar_{1, n}^{\ve}(\Gr(r, N), d)) = \sum_{i}(-1)^i \ch_nH^i_c(\Qbar_{1, n}^{\ve}(\Gr(r, N), d)) \in \Lambda. \]
 
 We fix a generic $\C^\star$-action on $\Gr(r, N)$ and calculate $\bbS_n$-equivariant Serre characteristic of the fixed point locus $\Mbar_{1,n}^{nrt}(\Gr(r,N), d)^{\C^\star}.$ This determines the $\bbS_n$-equivariant topological Euler characteristic $\chi^{\bbS_n}(\Mbar_{1,n}^{nrt}(\Gr(r,N), d))$, and then the Euler characteristic $\chi^{\bbS_n}(\Mbar_{1, n}(\Gr(r, N), d))$ is determined by plethysm with the genus-zero contribution determined by Bagnarol \cite{Bagnarol}: the equation
\begin{equation}\label{eqn:genus_one_tails}
 \overline{\cat{M}}_{1, r, N} = \overline{\cat{M}}_{1, r, N}^{nrt} \circ (p_1 + \overline{\cat{M}}_{0, r, N}^\star),
 \end{equation}
 holds upon specializing to topological Euler characteristics, and Bagnarol has determined $\overline{\cat{M}}_{0, r, N}^\star$. 
 Set
 \[ \overline{\cat{M}}_{1, r, N}^{nrt, \C^\star}:= \sum_{n,d} \cat{e}^{\bbS_n}(\Mbar_{1,n}^{nrt}(\Gr(r,N), d)^{\C^\star}). \]
Once we have determined $\overline{\cat{M}}_{1, r, N}^{nrt, \C^\star}$, we can also calculate $\chi^{\bbS_n}(\Qbar_{1, n}^{\ve}(\Gr(r, N), d))$, as we make precise in Corollary \ref{cor:top_chi_genus_one} below. Sample calculations of $\chi^{\bbS_0}(\Qbar_{1,0}(\Gr(r,N),d))$ are presented in Table \ref{table:chiQ10bar} from the introduction.

We remark that the interiors $\M_{1,n}(\Gr(r,N),d)$ and $\Q^{\ve}_{1,n}(\Gr(r,N),d)$ are likely to be reducible in general \cite{Bruguieres1987}, which makes torus localization a valuable tool to study the topology of the compactified mapping spaces.

\subsection{Torus action}
A tuple of integer weights $(\lambda_1, \ldots, \lambda_N) \in \Z^{N}$ determines an action of $\C^\star$ on $\C^{N}$, which induces an action of $\C^\star$ on $\Gr(r, N)$, and on the moduli space $\Mbar_{1, n}(\Gr(r, N), d)$. For a generic choice of weights, the $\C^\star$-fixed points in $\Gr(r, N)$ are exactly the $r$-dimensional coordinate subspaces, which we identify with the set $\binom{[N]}{r}$ of size-$r$ subsets of $[N] = \{1, \ldots, N\}$. There exists a (necessarily unique) torus-invariant rational curve connecting two invariant points $S_1, S_2, \in \binom{[N]}{r}$ if and only if $|S_1 \cap S_2| = r-1$. In this way we obtain a graph $J(r, N)$ with vertex set $\binom{[N]}{r}$, which is the $1$-skeleton of the hypersimplex $\Delta(r, N)$ and is sometimes called the Johnson graph.

The fixed locus $\Mbar_{1,n}^{nrt}(\Gr(r,N), d)^{\C^\star}$ is stratified by the following graphs, see e.g. \cite{kspp}:
\begin{enumerate}
  \item the graph with a single genus one vertex, colored by a point in $\Gr(r,N)^{\C^\star},$
  \item a cycle of at least two genus-zero vertices, where each vertex is decorated by a point in $\Gr(r,N)^{\C^\star}$, and each edge is labelled by a positive integer. Two vertices may be connected by an edge only if there is an edge between the corresponding points of $\Gr(r, N)^{\C^\star}$ in $J(r, N)$. The edge weights must sum to $d$.
\end{enumerate}

It is well-known how to express the stratum corresponding to each graph as a product of moduli spaces of curves; in the no-rational-tails case, this can be adapted from \cite[\S 4.1]{ks-genus1}. 

\subsection{Walks in the Johnson graph} We now explain how to adapt \cite[\S 5]{ks-genus1} to calculate $\overline{\cat{M}}_{1, r, N}^{nrt, \C^\star}$, by adding up the contribution from each graph. The graph automorphisms and the contribution from each vertex of the graph are exactly the same as in \cite{ks-genus1}. The only difference is the structure of $\C^\star$-fixed points and $\C^\star$-invariant $\P^1$'s in $\Gr(r,N)$ as encoded in the Johnson graph $J(r, N)$, which affects the set of graphs in item (2). The set of decorated graphs in item (2) is then the set of isomorphism classes of graph homomorphisms from edge-decorated cycle graphs to $J(r, N)$. Techniques in \cite{ks-genus1} reduce this to counting walks in the Johnson graph. Let $C_k$ be a labelled $k$-cycle and let $P_k$ be a labelled path with $k$ edges and $k + 1$ vertices.

\begin{lem}
  The number of graph homomorphisms from $C_k$ to the Johnson graph $J(r, N)$ is \[\varpi_{k, (r, N)}= \sum_{j = 0}^r \left(\binom{N}{j}-\binom{N}{j-1}\right)\left((r-j)(N-r-j)-j\right)^{k}.\]

  The number of graph homomorphisms from $P_k$ to the Johnson graph is \[\omega_{k, (r, N)} = \binom{N}{r}\cdot (r(N-r))^{k-1}.\]
\end{lem}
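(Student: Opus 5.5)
Both counts reduce to spectral data of the adjacency matrix $A$ of $J(r,N)$. A graph homomorphism from a labelled $k$-cycle $C_k$ to $J(r,N)$ is a closed walk of length $k$ with a distinguished starting vertex and direction. The number of these is $\mathrm{Tr}(A^k) = \sum_\theta m_\theta \theta^k$, where $\theta$ ranges over the eigenvalues of $A$ and $m_\theta$ is the multiplicity. Similarly, a homomorphism from the labelled path $P_k$ is a walk with $k$ steps, and their number is $\mathbf{1}^T A^k \mathbf{1}$.

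For the path count I would only use regularity. Every $r$-subset $S$ has exactly $r(N-r)$ neighbours in $J(r,N)$: remove one of its $r$ elements and add one of the $N-r$ others. So $A\mathbf{1}=r(N-r)\mathbf{1}$, and the walk count is $\binom{N}{r}(r(N-r))^k$, since the starting vertex is free and each step has $r(N-r)$ choices. The stated formula has exponent $k-1$ rather than $k$. So the first concrete step is to settle the indexing convention for $P_k$ used in \cite{ks-genus1}. I expect the intended convention to give $k-1$ free steps, for example $k$ vertices, or a path whose first edge is absorbed by a normalization. I would fix the convention so that it matches the formula and state it explicitly; once that is done the count is immediate from regularity.

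For the cycle count I would invoke the classical spectrum of the Johnson scheme, assuming $r\le N-r$ (by the symmetry $\Gr(r,N)\cong\Gr(N-r,N)$). The space $\C^{\binom{[N]}{r}}$ decomposes as $\bigoplus_{j=0}^{r} V_j$. Here $V_j$ is the irreducible $\bbS_N$-representation $E_{(N-j,j)}$, of dimension $\binom{N}{j}-\binom{N}{j-1}$. Since $A$ commutes with $\bbS_N$ and the permutation representation is multiplicity free, $A$ acts on each $V_j$ by a scalar $\theta_j$. To compute $\theta_j$, I would use the standard eigenvector for $V_j$: take disjoint pairs $(a_1,b_1),\dots,(a_j,b_j)$ and the function $\prod_i(\delta_{a_i}-\delta_{b_i})$, evaluated on subsets as a signed indicator. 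A direct neighbour count then gives the eigenvalue $\theta_j=(r-j)(N-r-j)-j$, which is the Eberlein-polynomial value. Alternatively, I would simply cite Delsarte or Brouwer--Cohen--Neumaier for this eigenvalue. Substituting into the trace formula yields $\varpi_{k,(r,N)}$.

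The main obstacle is verifying the eigenvalue $\theta_j$, if I compute it rather than cite it. The cleanest route is to write $A = (\text{number of pairs with one element of } S \text{ swapped out})$. This can be expressed through the $\bbS_N$ Casimir, namely the class sum of transpositions. On $E_{(N-j,j)}$ that class sum acts by the content sum $\binom{N-j}{2}-\binom{j}{2}+\ldots$. Swapping $a\in S$ with $b\notin S$ is exactly the action of the transposition $(a\,b)$ on $S$, while transpositions inside $S$ or inside its complement fix $S$. Hence
\[A = \sum_{\tau} \tau - \binom{r}{2} - \binom{N-r}{2}.\]
The sum of transpositions acts on $E_{(N-j,j)}$ by its content sum $\binom{N-j}{2}-\binom{j}{2}+\ldots$. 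Working this out, $\binom{N-j}{2}-j(j-1)/2\cdot$ adjusted by the second row contents $-1,0,\dots,j-2$, gives $\binom{N-j}{2}+\binom{j}{2}-j$. Subtracting $\binom r2+\binom{N-r}2$ then simplifies to $(r-j)(N-r-j)-j$, which confirms the formula.
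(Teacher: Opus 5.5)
Your proposal is correct and follows the paper's proof. The cycle count is $\mathrm{tr}(A^k)$ evaluated with the Johnson spectrum, and the path count comes from $r(N-r)$-regularity, exactly as in the paper. The paper simply cites Delsarte for the eigenvalues; your transposition-class-sum computation, $\binom{N-j}{2}+\binom{j}{2}-j-\binom{r}{2}-\binom{N-r}{2}=(r-j)(N-r-j)-j$, is a valid self-contained replacement.

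Your off-by-one concern is justified, and the problem lies in the paper, not in your reasoning. With the paper's stated convention that $P_k$ has $k$ edges and $k+1$ vertices, the paper's own argument gives $\binom{N}{r}(r(N-r))^{k}$, so the stated exponent $k-1$ is correct only for a path with $k$ vertices.

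If you reduce to $r\le N-r$ by symmetry, you need one extra remark to recover the stated sum. For $r>N-r$ the terms with $N-r<j\le r$ cancel in pairs $j\leftrightarrow N+1-j$, because these have equal eigenvalues and opposite multiplicities.
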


\begin{proof}
  Let $A_{r, N}$ be the adjacency matrix of $J(r, N)$. The $ij$-th entry of $A_{r,N}^k$ is the number of paths of length $k$ between the $i$-th and $j$-th vertices. Thus, the number of homomorphisms from $C_k$ is given by $\mathrm{tr}(A_{r,N}^k).$ The eigenvalues of $A_{r,N}$ are well-known: they are $(r-j)(N-r-j)-j$ for $j = 0,\dots, r$ with multiplicity $\binom{N}{j}-\binom{N}{j-1}$ \cite{Delsarte1973}. Combining the two facts leads to the first count.

  For maps from a labeled path, the count simply comes from picking the starting vertex and then neighbors of the previous vertices.
\end{proof}

Now one substitutes $\varpi_{k, (r, N)}$ (respectively $\omega_{k, (r, N)}$) for $\varpi_k(r+1)$ (respectively $\omega_k(r+1)$) in the derivation of \cite[\S 5.5]{ks-genus1} to derive a formula for $\overline{\mathsf{M}}_{1,r,N}^{nrt, \C^\star}.$

\begin{prop}\label{prop:chi_genus_one}
Let \(\mathscr{A}_g := \sum_{n > 2 - 2g} \mathsf{e}^{S_n}(\M_{g, n}),\) and define \[\eta_{k, d}(\Gr(r,N))=\frac{1}{4}\sum_{j \mid k}  \sum_{i \mid j}  \sum_{\substack{{\ell}\\{2\ell \mid i \text{ and } k \mid d\ell}}} \mu\left( \frac{i}{2\ell}\right)\binom{\frac{d\ell}{k} - 1}{\ell - 1} \binom{N}{r}\cdot (r(N-r))^{\ell-1},\] and
\begin{align*}
    \theta_{j,k,d}&(\Gr(r,N))\\&=\frac{\varphi(j)}{2k}\sum_{i \mid \frac{k}{j}}\sum_{\substack{{\ell}\\{\ell \mid i\text{ and }k\mid d\ell}}} \mu\left( \frac{i}{\ell} \right) \binom{\frac{d\ell}{k} - 1}{\ell - 1}\left(\sum_{j = 0}^r \left(\binom{N}{j}-\binom{N}{j-1}\right)\left((r-j)(N-r-j)-j\right)^{\ell}\right).
\end{align*}
then we have
\begin{align*}
\overline{\mathsf{M}}_{1,r,N}^{nrt, \C^\star} &= \binom{N}{r}\left(\mathscr{A}_1 + \frac{\dot{\mathscr{A}}_0(\dot{\mathscr{A}}_0 + 1) + \frac{1}{4}\psi_2(\mathscr{A}_0'')}{1 - \psi_2(\mathscr{A}_0'')} - \frac{1}{2} \sum_{n \geq 1} \frac{\varphi(n)}{n}\log(1 - \psi_n(\mathscr{A}_0''))\right) \\&\phantom{space}+ \sum_{d \geq 2} q^d \sum_{k = 2}^{d} \eta_{k, d}(\Gr(r,N)) \frac{(1 + 2 \dot{\mathscr{A}}_0)^2}{(1 - \psi_2(\mathscr{A}_0''))^{k/2 + 1}} + \sum_{j \mid k} \theta_{j,k,d}(\Gr(r,N))\frac{1}{(1 - \psi_j(\mathscr{A}_0''))^{k/j}}.
\end{align*}
\end{prop}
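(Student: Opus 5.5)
The plan is to run the genus-one graph-sum argument of \cite[\S 5]{ks-genus1} with the target $\P^r$ replaced by $\Gr(r,N)$. The only place the target enters is through the combinatorics of $\C^\star$-fixed points and invariant curves, which are now encoded by the Johnson graph $J(r,N)$ rather than the complete graph on $r+1$ vertices.

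First, we stratify $\Mbar_{1,n}^{nrt}(\Gr(r,N),d)^{\C^\star}$ by the two graph types listed above. For type (1), a contracted smooth genus-one core must have degree zero, since there are no rational tails. The stratum is then $\Mbar^{nrt}_{1,n}$, in fact $\M_{1,n}$ glued with no tails, times a fixed point. Summing over $n$ and over the $\binom{N}{r}$ fixed points gives $\binom{N}{r}$ times the vertex contribution already computed in \cite{ks-genus1}. That contribution is $\mathscr{A}_1$ plus the terms coming from cycles all of whose edges have degree zero, i.e.\ nodal cycles of contracted genus-zero components. These produce the $\dot{\mathscr{A}}_0$, $\psi_2(\mathscr{A}_0'')$ and $\log$ terms. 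Since all of them sit over a single fixed point, they are independent of the target beyond the factor $\binom{N}{r}$.

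For type (2), a cycle graph in which some edges are noncontracted $\C^\star$-invariant covers, we treat the stratum as follows. Each noncontracted edge of degree $e$ is a $\mu_e$-cover of an invariant $\P^1$, whose endpoints are fixed points adjacent in $J(r,N)$. Each vertex is either a special point or a contracted genus-zero component, contributing $\mathscr{A}_0''$ (two branches) or $\dot{\mathscr{A}}_0$ (one branch, at a path end). Following \cite[\S 4.1, \S 5]{ks-genus1}, the stratum for a fixed decorated graph is a quotient, by the graph automorphism group, of a product of $\M_{0,n_v}$'s. The automorphism group is a subgroup of the dihedral group acting on the cycle.

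Next, we organise the sum as in \cite[\S 5.5]{ks-genus1}. Using Burnside's lemma and M\"obius inversion over the rotational symmetries, indexed by $j\mid k$ with weight $\varphi(j)$, and over the reflections, which give the factor $1/4$, $2\ell\mid i$, and $\psi_2$, we reduce to counting labelled homomorphisms. There are two cases: homomorphisms from cycles $C_\ell$, for rotation-fixed configurations, and from paths $P_\ell$, for reflection-fixed configurations, which fold the cycle onto a path. Degree distributions contribute $\binom{d\ell/k-1}{\ell-1}$ compositions, and the vertex decorations contribute the geometric-series factors $(1-\psi_j(\mathscr{A}_0''))^{-k/j}$ and $(1+2\dot{\mathscr{A}}_0)^2(1-\psi_2(\mathscr{A}_0''))^{-k/2-1}$. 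In \cite{ks-genus1} these counts were $\varpi_\ell(r+1)$ and $\omega_\ell(r+1)$. By the preceding lemma they become $\varpi_{\ell,(r,N)}$ and $\omega_{\ell,(r,N)}$, and substituting gives $\theta_{j,k,d}$ and $\eta_{k,d}$ exactly as stated.

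The main obstacle is checking that nothing else in \cite{ks-genus1} used the specific structure of $\P^r$. Two points need care. First, the equivariant Serre characteristic of each stratum must depend only on the abstract decorated graph. This holds because the fixed loci are products of $\M_{0,m}$'s and points, with no target moduli. Second, the automorphism and Burnside bookkeeping for a graph homomorphism into $J(r,N)$ must be the same as into the complete graph. This holds because a homomorphism from a cycle or path only requires adjacency of consecutive vertices, and $J(r,N)$ has no loops, exactly like the complete graph. I would verify this by rereading the derivation in \cite[\S 5.5]{ks-genus1} line by line, and by checking the $d=1$ coefficient against Table \ref{table:quotients_genus_one} via Corollary \ref{cor:nrtqvsm}.
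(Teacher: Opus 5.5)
Your proposal is essentially the paper's argument. The paper likewise observes that the vertex contributions and graph automorphisms are exactly those of \cite{ks-genus1}, so only the Johnson graph $J(r,N)$ changes, and it then substitutes $\varpi_{k,(r,N)}$ and $\omega_{k,(r,N)}$ for $\varpi_k(r+1)$ and $\omega_k(r+1)$ in \cite[\S 5.5]{ks-genus1}.

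One caution before you assert the result ``exactly as stated''. Your $d=1$ check only sees the degree-zero (single fixed point) part of $\overline{\mathsf{M}}_{1,r,N}^{nrt,\C^\star}$, so it never tests the Johnson-graph input. The $n=0$, $d=2$ coefficient does test it: it reduces to $\eta_{2,2}+\theta_{1,2,2}$ and must equal the number $\frac{1}{2}\binom{N}{r}r(N-r)$ of edges of $J(r,N)$, which is the $3\binom{N}{3}$ in Table~\ref{table:quotients_genus_one}. This forces the path count entering $\eta_{k,d}$ to be the number of walks with $\ell$ edges, namely $\binom{N}{r}(r(N-r))^{\ell}$. So the printed exponent $\ell-1$ is a misprint, as is the exponent in $\omega_{k,(r,N)}$, which is inconsistent with $P_k$ having $k$ edges; your substitution should not reproduce it.
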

Note that $\mathscr{A_0}$ and $\mathscr{A}_1$ have been determined by Getzler \cite{GetzlerGenusZero, GetzlerMHM}. Let us now specialize to topological Euler characteristics. Write \[\overline{\mathtt{M}}_{1, r, N} := \sum_{n, d} \chi^{\bbS_n}(\Mbar_{1, n}(\Gr(r, N), d))q^d \in \Lambda[\![q]\!].\] Define $\overline{\mathtt{M}}_{1, r, N}^{nrt}$ and $\mathtt{M}_{1, r, N}$ analogously for the no-rational-tails locus and locus of maps from smooth curves, respectively. Also set $\overline{\mathtt{M}}_{0, r, N}^{\star}$, $\overline{\mathtt{Q}}_{0, r, N}^{\ve,\star}$, and $\mathtt{Q}_{0, r, N}^{\ve,\star}$ for the corresponding specializations of $\overline{\mathsf{M}}_{0, r, N}^{\star}$, $\overline{\mathsf{Q}}_{0, r, N}^{\ve,\star}$, and $\mathsf{Q}_{0, r, N}^{\ve,\star}$, respectively. Let $B_{r,\ve}$ and $B_{r,0}$ be the $\bbL=1$ specializations of $\boldsymbol{B}_{r,\ve}$ and $\boldsymbol{B}_{r,0},$ namely \[{B}_{r, \ve}: p_j \mapsto p_j + \sum_{k = 1}^{M_{\ve}} \binom{r + k - 1}{k} \ q^{jk}\text{ and } {B}_{r, 0}: p_j \mapsto p_j + \prod_{i = 0}^{r-1} \frac{1}{1 - q^j} - 1. \]
Note that by Lemma \ref{lem:g0plethysm}, we have
\[ (p_1 - \mathtt{Q}^{\ve, \star}_{0, r, N}) \circ (p_1 + \overline{\mathtt{Q}}_{0, r, N}^{\ve, \star}) = p_1, \]
and by Corollary \ref{cor:nrtqvsm}, we have 
\[ B_{r, \ve}(\mathtt{M}_{0, r, N}^{\star}) = \mathtt{Q}^{\ve, \star}_{0, r, N}, \]
so $\overline{\mathtt{Q}}_{0, r, N}^{\ve, \star}$ can be calculated starting from Bagnarol's formula Theorem \ref{thm:bagnarol} for $\cat{M}_{0, r, N}$. In particular, together with Proposition \ref{prop:chi_genus_one}, the following result determines $\chi^{\bbS_n}(\Qbar_{1, n}^{\ve}(\Gr(r, N), d))$ for arbitrary $\ve, n, r, N$, and $d$.

\begin{cor}\label{cor:top_chi_genus_one}
    We have \[\overline{\mathtt{Q}}_{1, r, N}^{\ve} = B_{r,\ve}(\overline{\mathtt{M}}_{1, r, N}^{nrt})\circ (p_1 + \overline{\mathtt{Q}}^{\ve,\star}_{0,r,N}).\]
\end{cor}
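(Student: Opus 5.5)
The plan is to take topological Euler characteristics in the two structural results already proved and compose them. Corollary \ref{cor:top_chi_genus_one} has the shape of Lemma \ref{lem:nrtplethysm} applied to $\Qbar^{\ve}$, with the no-rational-tails piece rewritten via Corollary \ref{cor:nrtqvsm}. Specialization to topological Euler characteristics is the ring homomorphism $K_0(\cat{MHS})\otimes\Lambda[\![q]\!]\to\Lambda[\![q]\!]$ sending $\bbL\mapsto 1$, i.e. taking the weight-forgetting Euler characteristic of each mixed Hodge structure class. So the first thing I will check is that this homomorphism commutes with plethysm. For that I need it to intertwine the Adams operations $\psi_n$ on $K_0(\cat{MHS})$ with the identity on $\QQ$. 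This holds because $\chi(\Sym^n M)$ is the $n$th coefficient of $(1-t)^{-\chi(M)}$ for signed super-dimensions, and the recursion in Definition \ref{def:Adams} then forces $\chi(\psi_n[M])=\chi(M)$. It also commutes with $\frac{\partial}{\partial p_1}$, since it acts only on coefficients.

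With this in hand, I apply Lemma \ref{lem:nrtplethysm} with $\Box=\Qbar^{\ve}$ and $g=1$:
\[\overline{\cat{Q}}^{\ve}_{1,r,N}=\overline{\cat{Q}}^{\ve,nrt}_{1,r,N}\circ\bigl(p_1+\overline{\cat{Q}}^{\ve,\star}_{0,r,N}\bigr).\]
Next, Corollary \ref{cor:nrtqvsm} gives $\overline{\cat{Q}}^{\ve,nrt}_{1,r,N}=\boldsymbol{B}_{r,\ve}(\overline{\cat{M}}^{nrt}_{1,r,N})$, and the $\ve\to0^+$ case is handled the same way with $\boldsymbol{B}_{r,0}$. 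Specializing at $\bbL=1$ sends $\boldsymbol{B}_{r,\ve}$ to $B_{r,\ve}$, because $\qbinomopt[\bbL^j]{r+k-1}{k}\mapsto\binom{r+k-1}{k}$ and $\prod_i(1-(\bbL^iq)^j)^{-1}\mapsto(1-q^j)^{-r}$. Here I must make sure the specialization commutes with applying the algebra map $\boldsymbol{B}_{r,\ve}$. It does, since $\boldsymbol{B}_{r,\ve}$ is a $K_0(\cat{MHS})$-linear substitution of the $p_j$ whose images have coefficients that specialize correctly. Substituting both facts and specializing gives the displayed formula.

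The main obstacle is being careful about what ``specialization'' means in the spaces involved. The quasimap and map spaces are Deligne--Mumford stacks and the strata are finite quotients, so I must check that $\chi^{\bbS_n}$ of $H^\bullet_c$ with $\QQ$-coefficients is additive on stratifications and multiplicative on Zariski-locally trivial fibrations. Both hold with rational coefficients. Given that, every identity used in Lemma \ref{lem:nrtplethysm} and Corollary \ref{cor:nrtqvsm} descends. A second point to verify is the compatibility of $\psi_n$ with $\chi$ when $M$ has odd-degree classes. The Serre characteristic is already a signed virtual class, so $\chi$ of $\Sym^n$ of a virtual class follows the $\lambda$-ring formula and no parity issue arises.
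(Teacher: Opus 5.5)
Your proposal is correct and follows the paper's proof. Both apply the dimension specialization to Lemma \ref{lem:nrtplethysm} (with $\Box=\Qbar^{\ve}$, $g=1$) and to Corollary \ref{cor:nrtqvsm}; this specialization commutes with plethysm, since Adams operations become trivial, and it sends $\boldsymbol{B}_{r,\ve}$ to $B_{r,\ve}$. The paper additionally records $\chi^{\bbS_n}(\Mbar^{nrt}_{1,n}(\Gr(r,N),d))=\chi^{\bbS_n}(\Mbar^{nrt}_{1,n}(\Gr(r,N),d)^{\C^\star})$ via \cite[Lemma 3.2]{ks-genus1}. That identity is not needed for the formula as literally stated, but it is what lets Proposition \ref{prop:chi_genus_one} actually compute the factor $\overline{\mathtt{M}}^{nrt}_{1,r,N}$.
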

\begin{proof}
    After specializing Lemma \ref{lem:nrtplethysm} and Corollary \ref{cor:nrtqvsm} to $\bbS_n$-equivariant Euler characteristics, it suffices to prove that $\chi^{\bbS_n}(\Mbar_{1,n}^{nrt}(\Gr(r,N),d)) = \chi^{\bbS_n}(\Mbar_{1,n}^{nrt}(\Gr(r,N),d)^{\C^\star}).$ This holds in general for (graded) $\bbS$-spaces admitting a commuting $\C^\star$-action \cite[Lemma 3.2]{ks-genus1}.
\end{proof}
\begin{rem}
    The strata of the $\C^\star$-fixed locus $\Qbar_{g,n}^{\ve}(\Gr(r,N),d)^{\C^\star}$ have been described in \cite[\S 5.1]{Toda} and \cite[\S 7.3]{MOP}. They involve the combinatorics of torsion sheaves, which in principle complicates the graph enumeration problem. From this perspective, the automorphisms ${B}_{r,\ve}$ and ${B}_{r, 0}$ encode the additional data relative to localization graphs for $\Mbar_{1, n}(\Gr(r, N), d)^{\C^\star}$ and can be potentially generalized to a technique for counting decorated graphs in other settings.
\end{rem}

\bibliographystyle{amsalpha}
\bibliography{reference}

\end{document}